\newtheorem{satz}{Theorem}
\newtheorem{theorem}[satz]{Theorem}
\newtheorem{lemma}[satz]{Lemma}
\newtheorem{corollary}[satz]{Corollary}
\newtheorem{remark}[satz]{Remark}
\def\Z{\mathbb {Z}}
\def\F{\mathbb {F}}
\def\E{\mathsf{E}}
\def\a{\alpha}
\def\d{\delta}
\def\({\big (}
\def\){\big )}
\def\G{\Gamma}
\def\le{\leqslant}
\def\ge{\geqslant}
\def\_phi{\varphi}
\def\eps{\varepsilon}
\def\Gr{{\mathbf G}}
\def\FF{\widehat}
\def\la{\lambda}
\def\D{\Delta}
\def\tr{\mathrm{tr}}
\def\SL{{\rm SL}}
\def\Aff{{\rm Aff}}
\newcommand{\zk}{\ensuremath{\mathfrak{k}}}
\author{Shkredov I.D.}
\title{Growth in Chevalley groups relatively to parabolic subgroups  and some applications
\footnote{This work is supported by the Russian Science Foundation under grant 19--11--00001.}
}
\date{}
\begin{document}
	\maketitle


\begin{center}
	Annotation.
\end{center}

{\it \small
	Given a Chevalley group $\Gr(q)$ and a parabolic subgroup $P\subset \Gr(q)$, we prove that for any set $A$
	there is a certain growth of $A$ relatively to $P$, namely, either $AP$ or $PA$ is much larger than $A$. 
	Also, we study a question about intersection of $A^n$ with parabolic subgroups $P$ for large $n$.  
	We apply our method 
	to obtain some results on a modular form of Zaremba's conjecture from the theory of continued fractions and make the first step towards Hensley's conjecture about some Cantor sets with Hausdorff dimension greater than $1/2$.
}
\\

\section{Introduction}
\label{sec:introduction}

In this paper we study some aspects of growth in Chevalley groups.
Developing the ideas from \cite{H} it was proved in \cite{BGT}, \cite{PS} that any finite simple group of Lie type has growth in the following sense.

\begin{theorem}
	Let $\Gr$ be a finite simple group of Lie type with rank $r$ and $A$ be a generating subset of $\Gr$. 
	Then either $A^{3}=\Gr$ or
$$
\left|A^{3}\right|>|A|^{1+c} \,, 
$$
where $c>0$ depends only on $r$.
\\
In particular, there is $n\ll (\log |\Gr|/\log |A|)^{C(r)}$ such that $A^n = \Gr$.  
\label{t:growth_in_Lie}
\end{theorem}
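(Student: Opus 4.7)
The plan follows the Pyber--Szab\'o/Breuillard--Green--Tao scheme, combining \emph{escape from subvarieties} with a \emph{Larsen--Pink} type non-concentration bound. Since $A$ generates $\Gr$, I first use escape: for every proper closed subvariety $V\subset\Gr$ of bounded degree defined over~$\F_q$, some bounded power $A^{k_0}$ (with $k_0$ depending only on the degree of $V$ and on $r$) must contain an element outside $V$. Applying this to the variety of non-regular-semisimple elements yields $g\in A^{k_0}$ regular semisimple, whose centraliser is a maximal torus $T\subset\Gr$; this $g$ will serve as the pivot for everything that follows.

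The heart of the argument is a Larsen--Pink type estimate
\[
|A^k\cap V| \;\ll_r\; |A^3|^{\dim V/\dim \Gr},
\]
valid for every proper closed $V\subset \Gr$ of bounded degree, with $k$ and the implicit constant depending only on~$r$. Granted this, a pivoting argument on the conjugation map $a\mapsto aga^{-1}$ works as follows: the number of distinct conjugates is $|A|/|A\cap T|$, and these all lie in $A^{k_0+2}$, so by Pl\"unnecke--Ruzsa the hypothesis $|A^3|\le |A|^{1+c}$ forces $|A\cap T|$ to be comparable to $|A|$. Equivalently, $A$ concentrates in the normaliser $N_{\Gr}(T)$, a proper subvariety of dimension~$r$. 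The non-concentration bound applied to $V=N_{\Gr}(T)$ then gives $|A|\ll |A^3|^{r/\dim\Gr}$, contradicting $|A^3|\le |A|^{1+c}$ once $c$ is chosen small enough in terms of $r$ (namely $c<(\dim\Gr-r)/r$).

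For the ``in particular'' clause, iterating the tripling bound gives $|A^{3^m}|\ge |A|^{(1+c)^m}$ as long as $A^{3^m}\ne \Gr$; this must break by the time $(1+c)^m\log|A|\ge\log|\Gr|$, whence $3^m \ll (\log|\Gr|/\log|A|)^{C(r)}$ with $C(r)=\log 3/\log(1+c)$. The main obstacle is the Larsen--Pink step: establishing the non-concentration estimate uniformly in $q$ and over all proper $V$ of bounded degree with constants depending only on~$r$ requires careful algebro-geometric dimension counting, and one must then execute the combinatorial passage from $|A\cap V|$ to $|A^3|$ (via Pl\"unnecke--Ruzsa and, where needed, Balog--Szemer\'edi--Gowers) without ever losing the rank-uniform dependence.
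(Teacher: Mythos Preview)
The paper does not prove this theorem: it is quoted from \cite{BGT}, \cite{PS} (building on \cite{H}) as background, so there is no ``paper's own proof'' to compare against. Your outline is indeed the Breuillard--Green--Tao / Pyber--Szab\'o strategy, and the first and third paragraphs (escape to a regular semisimple element; iteration to get the diameter bound) are fine.

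The middle paragraph, however, is garbled at the crucial step. From the pivot you only get
\[
\frac{|A|}{|A\cap T|} \;\le\; |A^{k_0+2}\cap C_g| \;\ll\; |A^3|^{(\dim\Gr - r)/\dim\Gr}
\]
via Larsen--Pink applied to the conjugacy class $C_g$ (not Pl\"unnecke--Ruzsa alone), which yields $|A\cap T|\gg |A|^{r/\dim\Gr}$ up to $K^{O(1)}$ factors --- \emph{not} that $|A\cap T|$ is comparable to $|A|$ (impossible, since $|T|\approx q^r$ while $|A|$ may be much larger). The subsequent sentence ``equivalently, $A$ concentrates in $N_\Gr(T)$'' is a non-sequitur: a large intersection $A\cap T$ does not place all of $A$ inside $N_\Gr(T)$, so you cannot deduce $|A|\ll |A^3|^{r/\dim\Gr}$ from Larsen--Pink for $V=N_\Gr(T)$; that bound only controls $|A\cap N_\Gr(T)|$.

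What is missing is the second half of the pivoting argument: one varies the torus over conjugates $aTa^{-1}$, $a\in A$, obtaining $\gg |A|/|A\cap N_\Gr(T)|$ distinct maximal tori, each meeting a bounded power of $A$ in $\gg K^{-O(1)}|A|^{r/\dim\Gr}$ regular semisimple elements. Since a regular semisimple element lies in a unique maximal torus these contributions are disjoint, and combining the resulting lower bound on $|A^{O(1)}|$ with the Larsen--Pink upper bound $|A\cap N_\Gr(T)|\ll |A^3|^{r/\dim\Gr}$ produces the contradiction. Without this counting over many tori (or an equivalent device) the argument does not close.
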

Theorem above gives an affirmative answer to the well--known Babai's conjecture \cite{Babai} for 
finite simple groups 
$\Gr$ having  
bounded rank. 
In this paper we consider two variants of this problem for Chevalley groups $\Gr(q)$
defined over the field $\F_q$. 
The motivation both of our problems goes back to a question from Number Theory, see \cite{MS_Zaremba} and Section \ref{sec:application}. 
Let us describe the first problem.
Let $P\subseteq \Gr(q)$ be any parabolic subgroup of $\Gr(q)$. 
First of all, what can we say about size of the product  of an arbitrary set $A\subseteq  \Gr(q)$ by $P$?
Of course, $A$ can be a family of cosets of $P$, say, $x_1P, \dots, x_k P$ and thus $AP$ does not grow. 
Similarly, if $A = \bigsqcup_j P y_j$, then $PA=A$. 
Nevertheless, we show that $A$ must grow either after left multiplication or after right multiplication.   
It reminds the sum--product phenomenon, see, e.g., \cite{TV} and indeed our new application to continued fractions (see Section \ref{sec:application} below) is connected with this area, see the discussion of the main results in \cite{NG_S}.

Let us formulate our
first theorem 
in the simplified form (actually, the restriction $A\cap P = \emptyset$ can be relaxed hugely, see Theorem \ref{t:growth_in_P} from Section \ref{sec:proof}). 
Our regime throughout this paper: $q$ tends to infinity  and rank is fixed.

\begin{theorem}
	Let $\Gr (q)$ be a Chevalley group and $P \subset \Gr (q)$ be a parabolic subgroup. 
	Then for any set $A\subseteq \Gr (q)$ with $A\cap P = \emptyset$ one has
	\begin{eqnarray}\label{f:growth_in_P_intr}
	\max\{ |AP|, |PA| \} \ge \frac{\sqrt{|A| |P|q}}{2} \,.
	\end{eqnarray} 
	\label{t:growth_in_P_intr}
\end{theorem}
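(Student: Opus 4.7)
The plan is to recast the problem as a fiber--counting question on the map $a \mapsto (aP, Pa)$ and feed into it one structural estimate coming from the Bruhat decomposition. Note first that $AP$ (respectively $PA$) is automatically a disjoint union of right (respectively left) $P$--cosets, so the integers
\[
n_R := |AP|/|P|, \qquad n_L := |PA|/|P|
\]
count the right and left cosets of $P$ that $A$ meets. Consider the map
\[
\Phi \colon A \longrightarrow \Gr(q)/P \times P\backslash \Gr(q), \qquad a \longmapsto (aP,\,Pa).
\]
Its image is contained in a set of size at most $n_R n_L$. A short direct computation shows that for any $a_0$ in a fixed nonempty fiber of $\Phi$, the fiber is contained in $a_0 P \cap P a_0 = a_0\,(P\cap a_0^{-1} P a_0)$, so its cardinality is at most $|P \cap a_0^{-1}Pa_0|$.

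The heart of the proof is therefore the following intersection estimate: for every $a \in \Gr(q) \setminus P$,
\[
|P \cap a^{-1}Pa| \;\le\; \frac{|P|}{q-1},
\]
equivalently $|PaP| \ge (q-1)|P|$. This is the step where I really use that $P$ is a parabolic subgroup of a Chevalley group (in particular, that $P$ is self--normalizing, so $a \notin P$ forces $aPa^{-1} \neq P$). The plan is to invoke the Bruhat decomposition $\Gr(q) = \bigsqcup_{w} PwP$, indexed by minimal--length representatives $w$ of $W_P \backslash W / W_P$; the hypothesis $a \notin P$ forces $w \neq 1$, and comparing $|PwP|$ with the Borel cell $|BwB| = q^{\ell(w)}|B|$ for $\ell(w) \ge 1$ should yield the required factor. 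Checking this in $\SL_2$ (where $P = B$, $|B| = q(q-1)$, and $|B \cap a^{-1}Ba| = q-1$ whenever $a \notin B$) serves as a sanity check and shows the bound is essentially tight.

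Granting the intersection estimate, and since $A \cap P = \emptyset$ makes it applicable to every $a_0 \in A$, summing fiber sizes over the at most $n_R n_L$ nonempty fibers gives
\[
|A| \;\le\; \frac{|P|}{q-1}\cdot n_R n_L,
\]
and therefore
\[
|AP|\cdot|PA| \;=\; n_R n_L\,|P|^2 \;\ge\; (q-1)\,|A|\,|P|.
\]
The bound $\max\{|AP|,|PA|\} \ge \sqrt{|AP|\cdot|PA|}$, combined with $q-1 \ge q/4$ for $q \ge 2$, then delivers the claimed $\sqrt{|A||P|q}/2$.

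The principal obstacle is the intersection lemma: extracting a factor of order $q$ uniformly in the parabolic type and the rank requires navigating the Bruhat cell structure of $P \cap a^{-1}Pa$ and handling all the nontrivial double cosets $PwP$ at once. Any looseness in the constant there only costs absolute factors in the final bound, which is consistent with the factor $1/2$ in~(\ref{f:growth_in_P_intr}), and the relaxation to arbitrary $A$ alluded to in the paper's Theorem~\ref{t:growth_in_P} presumably follows by a minor modification of the same fiber count once elements of $A \cap P$ are peeled off.
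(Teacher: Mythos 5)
Your argument is correct in structure and takes a genuinely different route from the paper's. The paper proves Theorem~\ref{t:growth_in_P} via an energy/Cauchy--Schwarz scheme: setting $A_g = A\cap gP$ and $\D=\max_{g\notin P}|A_g|$, it bounds $\E(A^{-1},P)$ to get $|AP|\ge |A||P|/\D$ and separately bounds $\E(P,A_g)\le 2|P|^2|A_g|/q$ (via Lemma~\ref{l:P^g_cap_P}) to get $|PA|\ge q\D/2$, then multiplies. Your fiber--counting argument on $\Phi\colon a\mapsto(aP,Pa)$ dispenses with the energy machinery entirely: every fiber of $\Phi$ over a point of its image in $A$ has size at most $|a_0 P\cap Pa_0|=|P\cap P^{a_0^{-1}}|$, so $|A|\le n_R n_L\max_{a\in A}|P\cap P^{a^{-1}}|$ and the result drops out of $\max\ge\sqrt{|AP||PA|}$. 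This is a cleaner and more transparent route to the same inequality, and both routes bottleneck through exactly the same structural fact about parabolics, namely the upper bound on $|P\cap P^g|$ for $g\notin P$.

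One caveat. Your stated intersection bound $|P\cap a^{-1}Pa|\le |P|/(q-1)$ is stronger than what the paper actually establishes; Lemma~\ref{l:P^g_cap_P} gives $|P\cap P^g|\le 2|P|/q$ (equivalently $r_{PgP}(x)\le 2|P|/q$), and the factor $2$ is paid genuinely in the general parabolic case when summing over Bruhat cells (it is only for $P=B$ that the paper notes the constant $2$ is not needed). You do not verify the sharper $1/(q-1)$ constant, and for general $P_J$ it is not obvious it holds. But this does not damage the argument: feeding $|P\cap a^{-1}Pa|\le 2|P|/q$ into your fiber count gives $|A|\le 2|P|q^{-1}n_Rn_L$, hence $|AP||PA|=n_Rn_L|P|^2\ge q|A||P|/2$ and $\max\{|AP|,|PA|\}\ge\sqrt{q|A||P|/2}\ge\sqrt{q|A||P|}/2$, matching the theorem (in fact, with the marginally better constant $1/\sqrt{2}$). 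So you should cite the paper's Lemma~\ref{l:P^g_cap_P} in place of your slightly optimistic ad hoc estimate, and the proof is complete.
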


For example, if $|A| \le  |P|$, then $\max\{ |AP|, |PA| \} \gg |A| \sqrt{q}$ and this is larger than $|A|$.

Theorem above helps us to study the second problem. 
Let $A$ be an arbitrary subset of a group $\Gr$ and $\G$ be a subgroup of $\Gr$. 
Can we guarantee that for a certain reasonable $n$ (say, $n$ depends on $\log |\Gr|/\log |A|$ only)
one has $A^n \cap \G \neq \emptyset$? 
The representation theory (see \cite{SX}, \cite{Gowers_quasirandom}, \cite{LS_representations} or Theorem \ref{t:A^n_cap_B} below) allows to show that 
{\it any} set $A\subset \Gr (q)$ of size at least $\Gr (q) q^{-r+\d}$, where $r$ is rank of $\Gr (q)$ and  $\d >0$ is an arbitrary real number effectively generates the whole group $\Gr (q)$.
In particular,  $A^n \cap \G \neq \emptyset$ for $n \ll_r \d^{-1}$ (see Section \ref{sec:representation}) and this bound is essentially sharp. 
We show that if one wants to find a non--trivial intersection with any parabolic subgroup of $\Gr (q)$,  then 
it is possible to 
break this barrier.

\begin{theorem}
	Let $q$ be an odd number, 
	$\Gr(q)$ be a 
	Chevalley group,  $P \subset \Gr (q)$ be a parabolic subgroup and $P_*$ be a proper parabolic subgroup of the maximal size.    
	Suppose that $|A| \ge  |P_*| q^{-1+\d}$, where $\d >0$ is a real number.
	Then there is $n$,  $n \ll_r \d^{-1}$ such that $A^{n} \cap P \neq \emptyset$. 
	\label{t:A^n_cap_B_intr}
\end{theorem}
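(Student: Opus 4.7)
I would prove Theorem~\ref{t:A^n_cap_B_intr} by contradiction, combining Theorem~\ref{t:growth_in_P_intr} with the representation-theoretic intersection bound of Theorem~\ref{t:A^n_cap_B}. Suppose $A^k\cap P=\emptyset$ for every $k\le N$, where $N=C_r/\delta$ will be chosen below. For each such $k$, Theorem~\ref{t:growth_in_P_intr} applied to the power $A^k$ yields
\[
\max\{|A^kP|,\,|PA^k|\}\ \ge\ \frac{1}{2}\sqrt{|A^k|\,|P|\,q}.
\]
After replacing $A$ by $A^{-1}$ if necessary, we may assume the maximum is attained at $|A^kP|$ for every $k\le N$. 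Combined with $|A^k|\ge|A|\ge|P_*|q^{-1+\delta}$ and $|P|\ge|P_*|$-type normalisation, this forces $|A^kP|/|P|\ge q^{\delta/2}/2$ throughout; equivalently, $A^k$ meets at least $q^{\delta/2}/2$ cosets of $P$.

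The heart of the proof is to promote this coset-growth to honest growth of $|A^k|$. The hypothesis $|A|\ge|P_*|q^{-1+\delta}$ falls roughly a factor of $q$ short of the threshold $|A|\ge|\Gr|q^{-r+\delta'}$ required by Theorem~\ref{t:A^n_cap_B}, while one application of Theorem~\ref{t:growth_in_P_intr} supplies a factor of $\sqrt{q}$. I therefore aim to show that after a bounded number of steps $k_0=O_r(1)$ one attains $|A^{k_0}|\ge|\Gr|q^{-r+\delta/2}$, at which point Theorem~\ref{t:A^n_cap_B} produces some $m\ll_r\delta^{-1}$ with $(A^{k_0})^m\cap P\ne\emptyset$, and the resulting $n=k_0 m\ll_r\delta^{-1}$ contradicts the standing assumption.

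To execute the conversion from coset-growth to set-growth I would project via $\pi\colon\Gr\to\Gr/P$, set $Y_k=\pi(A^k)$, and apply a dichotomy to the fibres of $\pi|_{A^k}$. Either (i) the fibres are essentially singletons, so $|A^k|\asymp|Y_k|$ and the $\sqrt{q}$-type growth of Theorem~\ref{t:growth_in_P_intr} transfers directly to $|A^k|$; or (ii) some coset $gP$ contains two distinct elements $a,a'\in A^k$, whence $a^{-1}a'\in P\cap A^{-k}A^k$ is non-trivial. Working throughout with the symmetrised set $A\cup A^{-1}$ (which only affects constants in Theorems~\ref{t:growth_in_P_intr} and~\ref{t:A^n_cap_B}) turns case~(ii) into $P\cap A^{2k}\ne\emptyset$, contradicting the standing assumption for small $k$. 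The main obstacle is making case~(i) quantitatively precise, i.e.\ turning the $\sqrt{q}$ growth into a genuine $q$-boost on $|A^k|$ within $O_r(1)$ steps; this amounts to a non-abelian Pl\"unnecke--Ruzsa-type calculation adapted to the coset space $\Gr/P$, where the constants must track $[\Gr:P]$ rather than an abstract tripling constant.
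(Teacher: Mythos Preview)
Your proposal has a genuine structural gap: it never separates the case when $A$ generates $\Gr(q)$ from the case when it does not, and your growth mechanism cannot work in the latter case. If $A\subseteq\Gamma$ for some proper subgroup $\Gamma\le\Gr(q)$, then $|A^k|\le|\Gamma|$ for every $k$, so no iteration of Theorem~\ref{t:growth_in_P_intr} can ever push $|A^{k_0}|$ up to $|\Gr|q^{-r+\delta/2}$. This is not a peripheral situation: the hypothesis $|A|\ge|P_*|q^{-1+\delta}$ is chosen precisely so that $A$ \emph{can} sit inside a maximal parabolic, and the whole content of the bound $n\ll_r\delta^{-1}$ comes from this non-generating case (in the generating case Theorem~\ref{t:growth_in_Lie} already gives $n=O_r(1)$).

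Your attempted escape hatch, symmetrising to $A\cup A^{-1}$, trivialises the problem rather than solving it: once you symmetrise, $1\in A A^{-1}\subseteq (A\cup A^{-1})^2\cap P$, so the conclusion is vacuous. The paper stresses in the Introduction that one \emph{cannot} assume $A=A^{-1}$ here. Relatedly, the reduction ``replace $A$ by $A^{-1}$ if necessary so that the maximum is $|A^kP|$ for every $k$'' is illegitimate, since which factor wins in Theorem~\ref{t:growth_in_P_intr} may flip with $k$.

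The paper's argument is quite different. If $A$ generates, iterate Theorem~\ref{t:growth_in_Lie} and you are done in $O_r(1)$ steps. If not, Lemma~\ref{l:P_2} (the classification input, using $q$ odd) forces $\langle A\rangle$ to be a parabolic subgroup $\Gamma$, since $|A|>\Pi_1(\Gr(q))q^{-1}\ge P(\Gr(q))$. One then observes that $P\cap\Gamma$ contains a maximal torus $T$ (any two Borels share a torus), and runs the Gowers/Sarnak--Xue argument \emph{inside $\Gamma$}, using Lemma~\ref{l:Frobenius_Chevalley} to get $d_{\min}\ge(q-1)/2$ for the relevant representations of $\Gamma$. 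This yields $A^n\cap T\neq\emptyset$ once $n\gg_r\delta^{-1}$. Theorem~\ref{t:growth_in_P_intr} is not used at all in this part; the decisive ingredients are the subgroup-structure Lemma~\ref{l:P_2} and the quasirandomness of parabolics from Lemma~\ref{l:Frobenius_Chevalley}.
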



It turns out that the method of the proof of Theorems \ref{t:growth_in_P_intr}, \ref{t:A^n_cap_B_intr} has some applications to the theory of continued fractions, namely, to Zaremba's conjecture. 
Let us recall the formulation.
Let $a$ and $q$ be two positive coprime integers, $0<a<q$. 
By the Euclidean algorithm, a rational $a/q$ can be uniquely represented as a regular continued fraction
\begin{equation}\label{exe}
\frac{a}{q}=[0;b_1,\dots,b_s]=
\cfrac{1}{b_1 +\cfrac{1}{b_2 +\cfrac{1}{b_3+\cdots +\cfrac{1}{b_s}}}}
~,\qquad b_s \ge 2.
\end{equation}
Zaremba's famous conjecture \cite{zaremba1972methode} posits that there is an absolute constant $\zk$ with the following property:
for any positive integer $q$ there exists $a$ coprime to  $q$ such that in the continued fraction expansion (\ref{exe}) all partial quotients are bounded:
\[
b_j (a) \le \zk,\,\, 1\le j  \le s = s(a).
\]
In fact, Zaremba conjectured that $\zk=5$.
For large prime $q$, even $ \zk=2$ should be enough, as conjectured by Hensley \cite{hensley_SL2}, \cite{hensley1996}.
This theme is rather popular especially at the last time, see, e.g., \cite{hensley1992continued}, \cite{Kontorovich_survey} or short surveys about this area in \cite{NG_S}, \cite{MS_Zaremba}. 
We just mention a result of  Korobov \cite{Korobov_book} who proved that one can always take growing $\zk$, namely, $\zk  = O(\log q)$ for prime $q$ (such result is also true for composite $q$).

In \cite{MS_Zaremba} we have proved a "modular"\, version of Zaremba's conjecture.

\begin{theorem}
	There is an absolute constant $\zk$ such that for any prime number  $p$ 
	there exist some positive integers $q = O(p^{30})$,  $q\equiv 0 \pmod p$ and  $a$, $a$ coprime with $q$ having  the property that the ratio $a/q$ has partial quotients  bounded by  $\zk$.  
	\label{t:main_MS}
\end{theorem}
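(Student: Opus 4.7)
The plan is to translate the divisibility $p \mid q$ into a question about products in $\SL_2(\F_p)$ and then invoke Theorem \ref{t:A^n_cap_B_intr}. To $a/q = [0; b_1,\dots,b_s]$ one attaches the matrix
$$
M(b_1,\dots,b_s) = \begin{pmatrix} b_1 & 1 \\ 1 & 0 \end{pmatrix} \cdots \begin{pmatrix} b_s & 1 \\ 1 & 0 \end{pmatrix} \in \SL_2(\Z) \,,
$$
whose $(1,1)$-entry is $q$ and whose $(2,1)$-entry is $a$. Hence $p \mid q$ is the condition that the reduction $\overline M$ belongs to the coset $wB$, where $B \subset \SL_2(\F_p)$ is the standard Borel and $w$ is the nontrivial Weyl element; coprimality of $a$ and $q$ comes for free from $\det M = \pm 1$. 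The task thus reduces to producing such a matrix product with all $b_i \le \zk$ and total length $\ll \log p$ whose reduction lies in $wB$.

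First, I fix a large absolute constant $\zk$ and introduce
$$
A = \{ M(b_1,\dots,b_s) \bmod p \,:\, 1 \le b_i \le \zk \} \subseteq \SL_2(\F_p) \,,
$$
where $s$ is chosen so that $|A| \ge |B| \cdot p^{-1+\d} \asymp p^{1+\d}$ for a small fixed $\d > 0$. Distinctness in $\SL_2(\Z)$ follows from uniqueness of the continued-fraction expansion, and the number of such matrices with all entries below $T$ is of order $T^{2\alpha_\zk}$, where $\alpha_\zk$ is the Hausdorff dimension of the Cantor set of reals in $[0,1]$ with partial quotients bounded by $\zk$. By Hensley, $\alpha_\zk \to 1$ as $\zk \to \infty$; taking $\zk$ large so that $2\alpha_\zk > 1+\d$ and choosing $s \asymp \log p / \log \zk$ so that all matrix entries stay below $p$, the reduction modulo $p$ is injective and $|A| \gg p^{1+\d}$ follows.

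Next, I apply Theorem \ref{t:A^n_cap_B_intr} with $\Gr(q) = \SL_2(\F_p)$ and $P_* = B$, obtaining $n \ll \d^{-1}$ with $A^n \cap wB \neq \emptyset$ (the passage from the parabolic subgroup $B$ in the statement to the coset $wB$ is achieved by augmenting $A$ with the single element $w$, which alters constants but not the qualitative conclusion). Any element of the intersection lifts to $M(c_1,\dots,c_L) \in \SL_2(\Z)$ of length $L \le ns$ in the generators $M_{c_j}$, $c_j \le \zk$; its $(1,1)$-entry $q$ is divisible by $p$, and the associated ratio $a/q$ has partial quotients bounded by $\zk$. Combining $q \le (\zk+1)^L$ with $L \le ns$, $n \ll \d^{-1}$ and $s \ll \log p/\log \zk$ gives $q \le p^{O(1/\d)}$; a suitable absolute choice of $\d$ yields $q = O(p^{30})$.

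The hard step is the lower bound $|A| \gg p^{1+\d}$. Uniqueness of the continued-fraction expansion is immediate in characteristic zero, but as $s \log \zk$ approaches $\log p$ one must rule out substantial cancellation modulo $p$; this is a non-concentration / equidistribution statement for the image of the Cantor-like matrix set in $\SL_2(\F_p)$, whose validity hinges on Hensley's dimension bound together with a careful choice of $s$. Once this size estimate is in hand, the application of Theorem \ref{t:A^n_cap_B_intr} and the constant-tracking leading to $p^{30}$ are routine.
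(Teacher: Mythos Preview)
Your overall strategy---encode continued fractions as matrix products, reduce modulo $p$, and apply Theorem \ref{t:A^n_cap_B_intr} to hit a Borel subgroup---is exactly the mechanism the paper uses in Section \ref{sec:application} (there to prove the stronger Theorem \ref{t:main_MS_new}; Theorem \ref{t:main_MS} itself is only cited from \cite{MS_Zaremba}). The size bound $|A|\gg p^{1+\d}$ is also handled as you say: take all entries below $p$ so that reduction is injective, then invoke Hensley's estimate $|F_M(Q)|\asymp_M Q^{2w_M}$ with $w_M$ close to $1$. Your final paragraph treats this as a hard ``non-concentration'' step, but you already disposed of it two paragraphs earlier; there is nothing deep here.

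The genuine gap is your passage from the coset $wB$ to the subgroup $B$. Theorem \ref{t:A^n_cap_B_intr} produces a nontrivial intersection of $A^n$ with a \emph{parabolic subgroup}, not with a coset. Your proposed remedy---adjoin $w$ to $A$---does not work: applying the theorem to $A'=A\cup\{w\}$ yields a word $g_1\cdots g_n\in B$ with each $g_i\in A\cup\{w\}$, but nothing prevents this word from being, say, $w^2=-I$, or from having copies of $w$ interspersed among genuine continued-fraction matrices, in which case the product does not correspond to any continued fraction with bounded partial quotients. The fix is simply to target a Borel directly. With your convention the product $\prod_j\bigl(\begin{smallmatrix} b_j & 1\\ 1 & 0\end{smallmatrix}\bigr)$ equals $\bigl(\begin{smallmatrix} q_s & q_{s-1}\\ p_s & p_{s-1}\end{smallmatrix}\bigr)$, so $A^n\cap B^{-}\neq\emptyset$ (lower--triangular Borel) gives $p\mid q_{s-1}$ for the concatenated word, and the convergent $p_{s-1}/q_{s-1}$ has bounded partial quotients and denominator divisible by $p$. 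Equivalently, use the paper's convention $\bigl(\begin{smallmatrix}0&1\\1&b\end{smallmatrix}\bigr)$ and the upper--triangular Borel; this is precisely what is done in Section \ref{sec:application}. A secondary wrinkle you do not address is that $\det\bigl(\begin{smallmatrix} b & 1\\ 1 & 0\end{smallmatrix}\bigr)=-1$, so one must restrict to even word length to land in $\SL_2$, as the paper does.
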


The first theorem  in this direction was  
proved by Hensley 
in \cite{hensley_SL2} and after that in  \cite{MOW_MIX}, \cite{MOW_Schottky}. 
Now using results similar to Theorems \ref{t:growth_in_P_intr}, \ref{t:A^n_cap_B_intr} above  and, of course,  growth results in $\SL_2 (\F_p)$ of Helfgott \cite{H}, we improve Theorem \ref{t:main_MS}.

\begin{theorem}
	Let $\epsilon \in (0,1]$ be any real number.
	There is a constant $\zk = \zk (\epsilon)$ 
	such that for any prime number  $p$ 
	there exist some positive integers 
	$q=O(p^{1+\epsilon})$, 
	$q\equiv 0 \pmod p$ and  $a$, $a$ coprime with $q$ having  the property that the ratio $a/q$ has partial quotients  bounded by  $\zk$.  
	\label{t:main_MS_new}
\end{theorem}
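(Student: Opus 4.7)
The plan is to translate the modular condition ``$p\mid q$ with $a/q$ having partial quotients bounded by $\zk$'' into a statement about products in $\SL_2(\F_p)$ of continued-fraction matrices, and then to locate a short such product lying in the Borel (parabolic) subgroup by combining Theorem~\ref{t:A^n_cap_B_intr} with Helfgott's growth theorem in $\SL_2(\F_p)$. Concretely, set $M_b:=\begin{pmatrix}b & 1\\ 1 & 0\end{pmatrix}$ for $1\le b\le\zk$; the $(2,1)$-entry of $M_{b_1}\cdots M_{b_L}$ equals the continuant $K(b_2,\dots,b_L)$, which is the denominator of $[0;b_2,\dots,b_L]$. Thus the existence of a length-$L$ product in the upper-triangular Borel $B\subset \mathrm{GL}_2(\F_p)$ is equivalent to the existence of a continued fraction of length $L-1$ with partial quotients $\le\zk$ and denominator divisible by $p$; restricting to a parity class of $L$ reduces everything to the setup of Theorem~\ref{t:A^n_cap_B_intr} with ambient group $\SL_2(\F_p)$.

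The quantitative heart of the proof is the construction of a product set $\bar A^{L_0}\subset \SL_2(\F_p)$ of size $\ge p^{1+\delta}$ with length $L_0=O_\epsilon(\log p/\log\zk)$, where $\bar A:=\{M_b\bmod p: 1\le b\le\zk\}$. For $L\le \log p/(2\log\zk)$ the reduction $A^L\to \mathrm{GL}_2(\F_p)$ is injective: two distinct length-$L$ products differ by an integer matrix whose entries are bounded in absolute value by $\zk^{2L}<p$, so this difference cannot be congruent to $\pm I\pmod p$ unless it equals $\pm I$. Hence $|\bar A^L|=\zk^L$ in this free regime and attains size $\approx p^{1/2}$ at its end. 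A bounded number of rounds (depending only on $\epsilon$) of Helfgott's tripling $|X^3|\ge |X|^{1+c}$ in $\SL_2(\F_p)$ then boosts the size past $p^{1+\delta}$ at length $L_0=C_0(\epsilon)\log p/\log\zk$; the Helfgott hypothesis -- that $\bar A^L$ escapes every proper subgroup of $\SL_2(\F_p)$ -- is verified once $\zk$ exceeds a small absolute threshold.

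Apply Theorem~\ref{t:A^n_cap_B_intr} to $\bar A^{L_0}\subset\SL_2(\F_p)$: in the notation of that theorem one has $|P_*|\asymp p^2$, so the hypothesis $|A|\ge |P_*|p^{-1+\delta}$ becomes $|A|\ge p^{1+\delta}$, and there exists $n\ll 1/\delta$ with $\bar A^{L_0 n}\cap B\ne\emptyset$. This produces a sequence $(b_1,\dots,b_L)$ of length $L=L_0 n=C(\epsilon)\log p/\log\zk$ with $b_j\le\zk$ and $p\mid K(b_2,\dots,b_L)$; the continued fraction $[0;b_2,\dots,b_L]$ then has length $L-1$, partial quotients $\le\zk$, and denominator $q\le\zk^{L-1}$. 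Choosing $\zk=\zk(\epsilon)$ large enough to enforce $C(\epsilon)\le 1+\epsilon$ gives $q\le p^{1+\epsilon}$, as required.

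The principal obstacle is the length calibration: Helfgott's constant $c$ is small and each tripling round multiplies length by $3$, so keeping the total multiplicative overhead $C(\epsilon)$ below $1+\epsilon$ demands that the free-semigroup phase already deliver a set of size close to $\sqrt{p}$, and that only an $\epsilon$-dependent bounded number of Helfgott rounds (and value of $n$ from Theorem~\ref{t:A^n_cap_B_intr}) are required. This pins down the dependence $\zk=\zk(\epsilon)$. Secondary technicalities -- ensuring $b_L\ge 2$ (the standard normalization of the continued fraction, which costs one unit in $\zk$), and picking the correct parity of $L$ so that products lie in $\SL_2(\F_p)$ rather than $\mathrm{GL}_2(\F_p)$ -- are routine.
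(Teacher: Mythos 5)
Your approach differs fundamentally from the paper's, and the gap is precisely in the step you flagged as the ``principal obstacle'' and then waved away. Let me make the obstruction concrete.

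With $\bar A=\{M_b\bmod p:1\le b\le\zk\}$, the denominator that you ultimately produce is $q\le\zk^{L}$ for the total length $L$ of the product, and during the free phase $\zk^{L_1}\approx p^{1/2}$, so the exponent of $p$ in the final bound equals $L/(2L_1)$ \emph{independently of $\zk$}. Starting from $|\bar A^{L_1}|\approx p^{1/2}$ and applying $m$ rounds of Helfgott tripling $|X^3|\ge|X|^{1+c}$ multiplies $L$ by $3^m$ and reaches size $p^{(1+c)^m/2}$; to pass $p^{1+\delta}$ you need $(1+c)^m\ge 2(1+\delta)$, so $m\gg c^{-1}$ (at least $m\ge 1$). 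Then Theorem~\ref{t:A^n_cap_B_intr} costs a further factor $n\ll\delta^{-1}$ in length. The resulting exponent is $3^mn/2$, an absolute constant determined by Helfgott's $c$ and the constants in Theorem~\ref{t:A^n_cap_B_intr}; it is certainly $\ge 3/2$ and in practice enormous. Increasing $\zk$ only shortens $L_1$ and $L$ proportionally and leaves the exponent $L/(2L_1)$ unchanged, so there is no choice of $\zk(\epsilon)$ that pushes the exponent below $1+\epsilon$. Your scheme proves $q=O(p^{C_0})$ for some absolute $C_0$, i.e.\ a result of the quality of Theorem~\ref{t:main_MS}, not Theorem~\ref{t:main_MS_new}.

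The paper sidesteps Helfgott tripling entirely by exploiting Hensley's Hausdorff-dimension estimate \eqref{QLOW}: the set $A$ of continued-fraction matrices with denominator $\le p$ and partial quotients $\le M$ already has size $\sim p^{2w_M}$, and by \eqref{oop} one can make $2w_M$ as close to $2$ as one wishes by enlarging $M$. This gives density far beyond $p^{1+\delta}$ at continued-fraction ``cost'' only $p$. Even so, naively invoking Theorem~\ref{t:A^n_cap_B_intr} would give $q=O(p^n)$ with $n$ a constant $>1$, so the paper does not stop there: it establishes Lemma~\ref{l:sigma_A} (upper bounds on intersections of $A$ with cosets of Borel and dihedral subgroups, giving a power-saving operator norm bound $\|\widehat{A}\|_o\ll|A|^{1-c_*}$ via Balog--Szemer\'edi--Gowers and Helfgott's growth) and Lemma~\ref{l:E_sqrt}, and then engineers a factorization $\Lambda_k^k\subset A$ so that the final equation $s\lambda_k s'=b$ with $s\in B\Lambda$, $s'\in\Lambda B$, $\lambda_k\in\Lambda_k$ corresponds to a continued fraction of denominator $\le p^{1/2}\cdot p^{1/k}\cdot p^{1/2}=p^{1+1/k}$. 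Choosing $k\sim1/\epsilon$ then yields $q=O(p^{1+\epsilon})$. In short: your construction makes $A$ too sparse, forcing you to pay for density with length, whereas the paper constructs $A$ dense from the start and spends its effort on Fourier-analytic refinements that shorten $n$ from the crude $\ll\delta^{-1}$ to effectively $1+\epsilon$.
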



Clearly, Theorem \ref{t:main_MS_new} is the best possible up to $\eps$ and it is the limit of our method.

Another result on continued fractions (see Theorem \ref{t:CF_alphabet} from Section \ref{sec:application}) 
is even more interesting than Theorem  \ref{t:main_MS_new} because its generality 
and because it is the first (weak) confirmation of Hensley's hypothesis  \cite[Conjecture 3]{hensley1996}.
Namely, let now the partial quotients $b_j$ belong to a finite set $\mathcal{A} \subset \mathbb{N}$, $|\mathcal{A}|\ge 2$ and  suppose that 
the Hausdorff dimension of the correspondent Cantor set is strictly greater than $1/2$ (all the definitions are contained in Section \ref{sec:application}).
Then we show that a full analogue of Theorem \ref{t:main_MS_new} takes place (with other constants, of course).

We finish the Introduction posing a weak version of Babai's conjecture.
Even for sufficiently large subgroups $\G$ the answer to our question is non--obvious.  

\bigskip 

{\bf Problem.} 
{\it 
	Let $\Gr$ be a finite simple non--abelian group, $\G\subset \Gr$  be a subgroup and $A\subseteq \Gr$ be an arbitrary (generating) set. 
	Is it true that $A^n  \cap \G \neq \emptyset$ with $n\ll (\log |\Gr|/\log |A|)^{C}$, where  $C>0$ is an absolute constant? 
}

\bigskip

If $A=A^{-1}$, then the set $AA = AA^{-1}$ obviously contains the unit element and hence 
the answer to the problem is trivially 
affirmative 
(moreover if $|A||\G| > |\Gr|$, then the Dirichlet principle shows that $|AA^{-1} \cap \G|>1$ and hence we can find a non--trivial element in $AA^{-1}$).
Thus  we cannot assume that $A=A^{-1}$ and, actually, this restriction is very important for some applications as for our modular version of Zaremba's conjecture.

We thank Nikolai Vavilov, Misha Rudnev for  useful discussions and Nikolay Moshchevitin for valuable discussions and  encouragement.

\section{Definitions} 
\label{sec:definitions}

Let $\Gr$ be a group with the identity $1$.
Given two sets $A,B\subset \Gr$, define  the \textit{product set}  of $A$ and $B$ as 
$$AB:=\{ab ~:~ a\in{A},\,b\in{B}\}\,.$$
In a similar way we define the higher product sets, e.g., $A^3$ is $AAA$. 
Let $A^{-1} := \{a^{-1} ~:~ a\in A \}$. 
As usual, having two subsets $A,B$ of a group $\Gr$,  denote by 
\[
\E(A,B) = |\{ (a,a_1,b,b_1) \in A^2 \times B^2 ~:~ a^{-1} b = a^{-1}_1 b_1 \}| 
\]
the {\it common energy} of $A$ and $B$. 
Clearly, $\E(A,B) = \E(B,A)$ and by the Cauchy--Schwarz inequality 
\begin{equation}\label{f:energy_CS}
\E(A,B) |A^{-1} B| \ge |A|^2 |B|^2 \,.
\end{equation}
We  use representation function notations like $r_{AB} (x)$ or $r_{AB^{-1}} (x)$, which counts the number of ways $x \in \Gr$ can be expressed as a product $ab$ or  $ab^{-1}$ with $a\in A$, $b\in B$, respectively. 
For example, $|A| = r_{AA^{-1}}(1)$ and  $\E (A,B) = r_{AA^{-1}BB^{-1}}(1) =\sum_x r^2_{A^{-1}B} (x)$. 
In this paper we use the same letter to denote a set $A\subseteq \Gr$ and  its characteristic function $A: \Gr \to \{0,1 \}$. 
We write $\F^*_q$ for $\F_q \setminus \{0\}$, where $q=p^s$, $p$ is a prime number, and $(a_1,\dots,a_l)$ for the greatest common divisor of some  given positive integers $a_1, \dots, a_l$.  
If $m$ divides $n$, then we write $m|n$.

Let $g\in \Gr$ and let  $A \subseteq \Gr$ be any set. 
Then  put $A^g = g A g^{-1}$ and, similarly, let $x^g := g x g^{-1}$, where  $x\in \Gr$. 
We write $N(A)$ for the normalizer of a set $A$, that is, $N(A) = \{ g\in \Gr ~:~ A^g = A\}$. 
If $H \subseteq \Gr$ is a subgroup, then we 
use the notation 
$H\le \Gr$.

In the paper we consider the group $\SL_2 (\F_q)$  of matrices 
\[
g=
\left( {\begin{array}{cc}
	a & b \\
	c & d \\
	\end{array} } \right) = (ab|cd) \,, \quad \quad a,b,c,d\in \F_q \,, \quad \quad ad-bc=1 \,, 
\]
as well as other {\it classical} groups as 
${\rm PSL}_n (q)$, ${\rm SU}_n (q)$,  ${\rm Sp}_n (q)$, ${\rm \Omega^{\eps}_n} (q)$ and so on.
Also, we use the usual Lie notation ${\rm A}_n (q)$, ${\rm B}_n (q)$ and so on.

The signs $\ll$ and $\gg$ are the usual Vinogradov symbols.
All logarithms are to base $2$.

\section{Simple facts from  the representation theory}
\label{sec:representation}

First of all, we recall some notions and simple facts from the representation theory, see, e.g., \cite{Naimark} or \cite{Serr_representations}.
For a finite group $\Gr$ let $\FF{\Gr}$ be the set of all irreducible unitary representations of $\Gr$. 
It is well--known that size of $\FF{\Gr}$ coincides with  the number of all conjugate classes of $\Gr$.  
For $\rho \in \FF{\Gr}$ denote by $d_\rho$ the dimension of this representation. 
By $d_{\min} (\Gr)$ denote 
the quantity 
$\min_{\rho \neq 1} d_\rho$.  
We write $\langle \cdot, \cdot \rangle$ for the corresponding  Hilbert--Schmidt scalar product 
$\langle A, B \rangle = \langle A, B \rangle_{HS}:= \tr (AB^*)$, where $A,B$ are any two matrices of the same sizes. 
Put $\| A\| = \sqrt{\langle A, A \rangle}$.
Clearly, $\langle \rho(g) A, \rho(g) B \rangle = \langle A, B \rangle$ and $\langle AX, Y\rangle = \langle X, A^* Y\rangle$.
Also, we have $\sum_{\rho \in \FF{\Gr}} d^2_\rho = |\Gr|$.

For any function $f:\Gr \to \mathbb{C}$ and $\rho \in \FF{\Gr}$ define the matrix $\FF{f} (\rho)$, which is called the Fourier transform of $f$ at $\rho$ by the formula 
\begin{equation}\label{f:Fourier_representations}
\FF{f} (\rho) = \sum_{g\in \Gr} f(g) \rho (g) \,.
\end{equation}
Then the inverse formula takes place
\begin{equation}\label{f:inverse_representations}
f(g) = \frac{1}{|\Gr|} \sum_{\rho \in \FF{\Gr}} d_\rho \langle \FF{f} (\rho), \rho (g^{-1}) \rangle \,,
\end{equation}
and the Parseval identity is 
\begin{equation}\label{f:Parseval_representations}
\sum_{g\in \Gr} |f(g)|^2 = \frac{1}{|\Gr|} \sum_{\rho \in \FF{\Gr}} d_\rho \| \FF{f} (\rho) \|^2 \,.
\end{equation}
The main property of the Fourier transform is the convolution formula 
\begin{equation}\label{f:convolution_representations}
\FF{f*g} (\rho) = \FF{f} (\rho) \FF{g} (\rho) \,,
\end{equation}
where the convolution of two functions $f,g : \Gr \to \mathbb{C}$ is defined as 
\[
(f*g) (x) = \sum_{y\in \Gr} f(y) g(y^{-1}x) \,.
\]
Finally, it is easy to check that for any matrices $A,B$ one has $\| AB\| \le \| A\|_{o} \| B\|$ and $\| A\|_{o} \le \| A \|$, where  the operator $l^2$--norm  $\| A\|_{o}$ is just the absolute value of the maximal singular value of $A$.  
In particular, it shows that $\| \cdot \|$ is indeed a matrix norm.

\bigskip

For any function $f : \Gr \to \mathbb{C}$ consider the  Wiener norm of $f$ defined as 
\begin{equation}\label{def:Wiener}	
\| f\|_W := \frac{1}{|\Gr|} \sum_{\rho \in \FF{\Gr}} d_\rho \| \FF{f} (\rho) \| \,. 
\end{equation}

\begin{lemma}
	Let $\G \le \Gr$. 
	Then $\| \G \|_W \le 1$.  
\label{l:G_Wiener}
\end{lemma}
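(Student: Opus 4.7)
The plan is to exploit a structural feature of the characteristic function of a subgroup: under convolution it is essentially idempotent, which forces each matrix $\widehat{\Gamma}(\rho)$ to be, up to the scalar $|\Gamma|$, an orthogonal projection. Once this is observed the lemma reduces to the fact that $\sqrt{k}\le k$ for every non-negative integer $k$, combined with Parseval.

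First I would verify the convolution identity $\Gamma*\Gamma = |\Gamma|\,\Gamma$. Indeed, $(\Gamma*\Gamma)(x)$ counts pairs $(y,z)\in \Gamma\times \Gamma$ with $yz=x$, which gives $|\Gamma|$ when $x\in \Gamma$ and $0$ otherwise. Applying the convolution rule \eqref{f:convolution_representations} yields $\widehat{\Gamma}(\rho)^2 = |\Gamma|\widehat{\Gamma}(\rho)$, so $P_\rho := |\Gamma|^{-1}\widehat{\Gamma}(\rho)$ is idempotent. Moreover, since $\Gamma = \Gamma^{-1}$ as a subset of $\Gr$, one has
\[
\widehat{\Gamma}(\rho)^* = \sum_{g\in\Gamma}\rho(g)^* = \sum_{g\in\Gamma}\rho(g^{-1}) = \widehat{\Gamma}(\rho),
\]
so $P_\rho$ is self-adjoint. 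A self-adjoint idempotent is an orthogonal projection, hence
\[
\|\widehat{\Gamma}(\rho)\|^2 = |\Gamma|^2\,\mathrm{tr}(P_\rho^* P_\rho) = |\Gamma|^2\,\mathrm{tr}(P_\rho) = |\Gamma|^2\,\mathrm{rank}(P_\rho).
\]

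Next I would compute $\sum_\rho d_\rho\,\mathrm{rank}(P_\rho)$ by applying Parseval \eqref{f:Parseval_representations} to $f=\Gamma$:
\[
|\Gamma| = \sum_{g\in\Gr}|\Gamma(g)|^2 = \frac{1}{|\Gr|}\sum_{\rho\in\widehat{\Gr}} d_\rho\,\|\widehat{\Gamma}(\rho)\|^2 = \frac{|\Gamma|^2}{|\Gr|}\sum_{\rho\in\widehat{\Gr}} d_\rho\,\mathrm{rank}(P_\rho),
\]
which gives $\sum_\rho d_\rho\,\mathrm{rank}(P_\rho) = |\Gr|/|\Gamma|$. Since $\mathrm{rank}(P_\rho)$ is a non-negative integer, $\sqrt{\mathrm{rank}(P_\rho)} \le \mathrm{rank}(P_\rho)$, and therefore
\[
\|\Gamma\|_W = \frac{1}{|\Gr|}\sum_{\rho\in\widehat{\Gr}} d_\rho\,\|\widehat{\Gamma}(\rho)\| = \frac{|\Gamma|}{|\Gr|}\sum_{\rho\in\widehat{\Gr}} d_\rho\,\sqrt{\mathrm{rank}(P_\rho)} \le \frac{|\Gamma|}{|\Gr|}\sum_{\rho\in\widehat{\Gr}} d_\rho\,\mathrm{rank}(P_\rho) = 1,
\]
as required.

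The only conceptual step is the recognition that the Fourier transform of a subgroup indicator is a scalar multiple of an orthogonal projection; after that, there is no real obstacle, the rest being a short calculation using Parseval and the elementary inequality $\sqrt{k}\le k$ for $k\in \N\cup\{0\}$.
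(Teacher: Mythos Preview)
Your proof is correct and follows essentially the same route as the paper: both arguments rest on the idempotence $\widehat{\Gamma}(\rho)^2=|\Gamma|\,\widehat{\Gamma}(\rho)$ together with Parseval, the paper phrasing the key consequence as ``either $\|\widehat{\Gamma}(\rho)\|=0$ or $\|\widehat{\Gamma}(\rho)\|_o=|\Gamma|$'' while you phrase it (more transparently) as ``$|\Gamma|^{-1}\widehat{\Gamma}(\rho)$ is an orthogonal projection,'' and in either formulation the conclusion $\|\widehat{\Gamma}(\rho)\|^2\ge |\Gamma|\,\|\widehat{\Gamma}(\rho)\|$ plus Parseval gives $\|\Gamma\|_W\le 1$.
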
 
\begin{proof}
	Since $\G$ is a subgroup, we see using  \eqref{f:Parseval_representations} twice that 
	\[
	|\G|^2 = |\{\gamma_1 \gamma_2 = \gamma_3 ~:~ \gamma_1,\gamma_2,\gamma_3 \in \G\}| = \frac{1}{|\Gr|} \sum_{\rho \in \FF{\Gr}} d_\rho \langle \FF{\G}^2 (\rho), \FF{\G} (\rho) \rangle 
	\le
	\]
	\[ 
	\le
	\frac{1}{|\Gr|} \sum_{\rho} d_\rho \langle \FF{\G} (\rho), \FF{\G} (\rho) \rangle \| \FF{\G} (\rho)\|_{o} 
	\le
	\frac{|\G|}{|\Gr|} \sum_{\rho} d_\rho \langle \FF{\G} (\rho), \FF{\G} (\rho) \rangle = |\G|^2 \,,
	\]
	because, clearly, $\| \FF{\G} (\rho)\|_{o} \le |\G|$.  
	It means that for any representation $\rho$ either $\| \FF{\G} (\rho)\| = 0$ (and hence $\| \FF{\G} (\rho)\|_o = 0$) or $\| \FF{\G} (\rho)\| \ge \| \FF{\G} (\rho)\|_{o} = |\G|$ (alternatively, one can use the usual calculations, namely, $\sum_{\gamma \in \G} \rho(\gamma \gamma_*) = \sum_{\gamma \in \G} \rho(\gamma) \cdot \rho (\gamma_*)$ for any $\gamma_* \in \G$  but then one needs to be careful with divisors of zero). 
	Another application of \eqref{f:Parseval_representations} gives us
	\begin{equation}\label{tmp:01.10_1}
	|\G| = \frac{1}{|\Gr|} \sum_{\rho} d_\rho \|\FF{\G} (\rho) \|^2  \ge |\G| \cdot \frac{1}{|\Gr|} \sum_{\rho} d_\rho \|\FF{\G} (\rho) \| = |\G| \| \G \|_W \,.
	\end{equation}
	Hence $\| \G \|_W \le 1$
	as required.
$\hfill\Box$
\end{proof}

\bigskip 

Lemma \ref{l:G_Wiener} implies a result on  growth in the affine group relatively to some subgroups.
Namely, 
the following 
Corollary \ref{c:Aff_U} can be considered as a "baby"\,--version of our 
main results on intersections of $A^n$ with parabolic subgroups. 
Clearly, the standard Borel subgroup $B = (\lambda u| 0 \lambda^{-1})$ of the upper--triangular matrices isomorphic to a subset of $\Aff(\F_q)$ via the map $\_phi ((\lambda u| 0 \lambda^{-1})) = (\lambda^2\,  \lambda u | 0 1)$ with $\mathrm{Ker}\, \_phi = \pm I$.
The representation theory of $B$ is similar to the representation theory of $\Aff(\F_q)$ (there are $q+3$ conjugation classes, further, there exists $q-1$ one--dimensional representations and four representations of dimension $(q-1)/2$).
Hence we can apply Corollary \ref{c:Aff_U} in our studying of growth in $\SL_2 (\F_q)$.

\begin{corollary}
	Let $A\subseteq \Aff (\F_q)$ be a set,  and let 
	$\G \subseteq \Aff (\F_q)$ be a subgroup such that for any non--trivial multiplicative character $\chi$ there is $\gamma = (a,b) \in \Gamma$ such that $\chi(a) \neq 1$. 
	Also, let $z\in \Aff (\F_q)$ be an arbitrary element, $n\ge 1$ be a positive integer and  $|A|^n |\G|^2 > q^{n+2} (q-1)^2$.
	Then  $A^n \cap z\G \neq \emptyset$ and $A^n \cap \G z \neq \emptyset$.   
\label{c:Aff_U}
\end{corollary}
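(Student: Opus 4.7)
The plan is to count the number of $n$-fold products from $A$ landing in $z\G$ via harmonic analysis on $\Aff(\F_q)$. Let $f_n := A*A*\cdots*A$ ($n$ copies) be the $n$-fold convolution of the indicator of $A$, so that $\sum_{x \in z\G} f_n(x) = (f_n * \G)(z)$; it suffices to prove this quantity is strictly positive. Applying the inverse formula~\eqref{f:inverse_representations} together with the convolution identity~\eqref{f:convolution_representations} and separating out the trivial representation,
\[
(f_n * \G)(z) \;=\; \frac{|A|^n |\G|}{|\Gr|} \;+\; \frac{1}{|\Gr|}\sum_{\rho \ne 1} d_\rho \tr\bigl(\widehat{A}(\rho)^n\, \widehat{\G}(\rho)\, \rho(z)\bigr),
\]
so the problem reduces to showing the main term exceeds the sum over non-trivial irreducible representations.

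The hypothesis on $\G$ is exactly that the image of $\G$ in $\F_q^*$ under $(a,b) \mapsto a$ lies inside the kernel of no non-trivial multiplicative character; since $\F_q^*$ is cyclic this forces the image to be all of $\F_q^*$. Moreover, each non-empty fibre of this projection is a coset of the abelian subgroup $\G \cap U$ (where $U = \{(1,b)\}$) and so of constant cardinality. Consequently, for every non-trivial one-dimensional character $\chi$ of $\Aff(\F_q)$,
\[
\widehat{\G}(\chi) \;=\; |\G \cap U|\sum_{a \in \F_q^*}\chi(a) \;=\; 0,
\]
so the error sum actually ranges only over the $(q-1)$-dimensional irreducible representations of $\Aff(\F_q)$.

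For such a $\rho$, Cauchy--Schwarz in the Hilbert--Schmidt inner product combined with iterated submultiplicativity of the operator norm gives $|\tr(\widehat{A}(\rho)^n \widehat{\G}(\rho) \rho(z))| \le \|\widehat{A}(\rho)\|_o^{n-1}\|\widehat{A}(\rho)\|\,\|\widehat{\G}(\rho)\| \le \|\widehat{A}(\rho)\|^n \|\widehat{\G}(\rho)\|$, and Parseval~\eqref{f:Parseval_representations} bounds $\|\widehat{A}(\rho)\|^2 \le |\Gr||A|/(q-1) = q|A|$. Factoring out the uniform bound $(q|A|)^{n/2}$ and invoking Lemma~\ref{l:G_Wiener} in the form $\sum_\rho d_\rho \|\widehat{\G}(\rho)\| \le |\Gr|$ controls the error by $(q|A|)^{n/2}$. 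The dominance of the main term over this bound is precisely the condition
\[
|A|^n |\G|^2 \;>\; q^n |\Gr|^2 \;=\; q^{n+2}(q-1)^2,
\]
which is the hypothesis. The case of $\G z$ is identical with $\rho(z)\widehat{\G}(\rho)$ replaced by $\widehat{\G}(\rho)\rho(z)$. The only step that is not bookkeeping is the derivation of $\widehat{\G}(\chi) = 0$ for all non-trivial $\chi$; once that is in place, the rest is a matter of combining Parseval, Cauchy--Schwarz, and Lemma~\ref{l:G_Wiener}.
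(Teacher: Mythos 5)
Your argument follows the paper's proof step for step: separate out the trivial representation in $(A^{*n}*\G)(z)$, use the hypothesis on $\G$ to kill the non-trivial one-dimensional representations $\rho_\chi$, and bound the remaining $(q-1)$-dimensional contribution by combining the Parseval estimate $\|\widehat{A}(\pi)\|\le (q|A|)^{1/2}$ with Lemma~\ref{l:G_Wiener}, which yields exactly the stated threshold $|A|^n|\G|^2>q^{n+2}(q-1)^2$. The only cosmetic difference is your explicit fibre-counting derivation of $\widehat{\G}(\rho_\chi)=0$ (image of $\G$ in $\F_q^*$ is all of $\F_q^*$, fibres constant over $\G\cap U$), whereas the paper simply notes that $\rho_\chi|_{\G}$ is a nontrivial character of the subgroup $\G$ and therefore sums to zero.
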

\begin{proof} 
	The representation theory of $\Aff (\F_q)$ is well--known see, e.g., \cite{Celniker}.
	Namely, there are $(q-1)$ one--dimensional representations $\rho_{\chi}$, which are given by multiplicative characters $\chi$, where $\rho_\chi((ab|01)) := \chi(a)$ 
	and a certain  $(q-1)$--dimensional representation $\pi$. 
	Using formula \eqref{f:Parseval_representations} with $f=A$, we have 
	\begin{equation}\label{f:Fourier_est_Aff}
	\| \FF{A} (\pi) \|_{o} < \left(\frac{|A| |\Aff(\F_q)|}{q-1} \right)^{1/2} = (|A| q)^{1/2} \,.
	\end{equation}
	Further by the assumption for any  non--trivial multiplicative character $\chi$ there is $\gamma = (a,b) \in \Gamma$ such that $\chi(a) \neq 1$. 
	It means that for any 
	such 
	$\chi$ one has $\rho_{\chi} (\G) = 0$.  
	Applying bound \eqref{f:Fourier_est_Aff}, Lemma \ref{l:G_Wiener}  and using formula \eqref{f:Parseval_representations} again, we obtain 
\[
	|A^n \cap z\G| = \frac{|A|^n |\G|}{|\Aff (\F_q)|} + \frac{q-1}{|\Aff (\F_q)|} \langle \FF{A}^n (\pi), \FF{\G} (\pi) \rangle 
	\ge 
	\frac{|A|^n |\G|}{|\Aff (\F_q)|} - (|A| q)^{n/2} > 0 \,, 
\]
	provided $|A|^n |\G|^2 > q^{n+2} (q-1)^2$.  	
	This completes the proof.
$\hfill\Box$
\end{proof}

\bigskip 

The condition $|A|^n |\G|^2 > q^{n+2} (q-1)^2$ effectively works if, roughly, $|A| \gg q^{1+\eps}$, where $\eps>0$ is a certain number. 
Further, an example of subgroup $\G$ from   Corollary \ref{c:Aff_U} is a torus $(\la 0|0\la^{-1})$, where $\la$ runs over $\F_q^*$. 
In contrary, 
if, say, $\G$ is 
the unipotent subgroup $U \subseteq \Aff (\F_q)$, then  one can easily construct a set $A$, $|A| \gg q^2/n$ such that $A^n \cap U  = \emptyset$.

\section{Some facts about Chevalley groups}
\label{sec:Chevalley}

We  recall quickly some properties of Chevalley groups. 
The detailed description of such groups can be found in many books and papers, see, e.g, classical book \cite{Steinberg} and paper \cite{Carter}. 

Let $p$ be a prime number, $q=p^s$ and  $\F_q$ be the finite field of size $q$. 
Also, let $\Phi$ be a root system, $\Pi$ its fundamental subsystem, $\Pi \subseteq \Phi^{+}$,  
$\Phi = \Phi^{+} \bigsqcup (-\Phi^{+})$.
Everything below depends on the root system $\Phi$ (and hence on $\Pi$, $\Phi^{+}$,  $-\Phi^{+}$ and so on) but we do not emphasis on this. 
Let $B$ be a Borel subgroup of $\Gr = \Gr(q)$, $U = O_p (B)$, $B=UH$ (the product is direct and $U$ is normal in $B$), $N=N(H)$ with $H$ an abelian $p'$--group (Cartan subgroup). 
The unipotent subgroup $U$ is the direct product of subgroups $\prod_{r\in \Phi^{+}} U_r$ and each $U_r$ isomorphic to the field $\F_q$. 
The Weyl group $W=N / H$ is a group generated by fundamental reflections $w_{r_1}, \ldots, w_{r_l}$, $l=|\Pi|$ and $W$ acts on the root system $\Phi$. 
When there is no problem with coset representatives we will consider $s\in W$ as an element of $\Gr(q)$. 
For $w\in W$ let $l(w)$ be the {\it length} of $w$, that is, the minimal $n$ such that $w=w_{r_1} \dots w_{r_n}$ with  $r_j \in \Pi$. 
Another description of $l(w)$ is $l(w) = |\Phi^{+} \cap w^{-1} (-\Phi^{+})|$ and it is known that $l(w) = 0$ iff $w=1$ 
(and iff $w(\Pi) = \Pi$ and iff $w(\Phi^{+}) = \Phi^{+}$).
For any $\emptyset \neq J \subseteq \Pi$ let $W_J$ be a subgroup of $W$ generated by $w_r$, where $r\in J$.  
It is well--known that for any Chevalley group the Bruhat decomposition takes place, namely, 
\begin{equation}\label{f:Bruhat}	
	\Gr = \bigsqcup_{w\in W} B w B \,,
\end{equation}
where the union in \eqref{f:Bruhat} is disjoint. 
It follows from the fact that for any fundamental root $r$ and an arbitrary $w\in W$ one has
\begin{equation}\label{f:B_inclusions}
	w_r B w \subseteq BwB \cup B w_r w B \,. 
\end{equation}
Decomposition \eqref{f:Bruhat} 
can be refined further. 
For $w\in W$ put 
$$
	U'_w = \langle \{ U_r ~:~ r\in \Phi^{+}\,, w(r) \in \Phi^{+} \} \rangle 
	\quad 	\mbox{ and } \quad 
	U''_w = \langle  \{ U_r  ~:~ r\in \Phi^{+}\,, w(r) \in -\Phi^{+} \} \rangle \,.
$$
Then, clearly,  $U= U'_w U''_w$, $B=H U'_w U''_w$ and $wU'_w w^{-1} \subseteq U$.  
Thus \eqref{f:Bruhat} can be 
transformed 
as 
\begin{equation}\label{f:Bruhat'}	
	\Gr = \bigsqcup_{w\in W} B w U''_w \,,
\end{equation}
and any element of $\Gr$ can be written in form \eqref{f:Bruhat'} uniquely. 
In particular, 
\begin{equation}\label{f:size_G}
	|\Gr| = |B| \sum_{w\in W} |U''_w| = |H| |U|  \sum_{w\in W} |U''_w| = (q-1)^{|\Pi|} q^{|\Phi^{+}|}  \sum_{w\in W} q^{l(w)} \,.
\end{equation}
From the Bruhat decomposition and the properties of Chevalley groups, it follows that all subgroups containing $B$ are $2^{l}$ subgroups of the form $P_J:= B W_J B$
and they are called {\it parabolic subgroups}. 
It is known that $N(P_J) = P_J$, and 
$$P_J = \langle B, \{w_j\}_{j\in J} \rangle = \langle B, \prod_{j=1}^J w_j \rangle = \langle B, (\prod_{j=1}^J w_j) B (\prod_{j=1}^J w_j)^{-1} \rangle \,.$$
Put $W^J = \{ w\in W ~:~ w(r) \in \Phi^{+} \mbox{ for all } r\in J \}$. 
One can check 
that any $w\in W$ can be decomposed uniquely as $w=w^J w_J$, where $w^J \in W^J$ and $w_J \in W_J$ and, moreover, $l(w) =  l(w^J) + l(w_J)$.
Any $W_J$ (and $W$ in particular) contains the unique {\it longest} element and this element is an involution. 
Formula \eqref{f:size_G} says that, basically, the length of this longest element determines size of $P_J$. 

In paper \cite{LS_representations} it was proved that Chevalley groups are quasi--random in the sense of Gowers \cite{Gowers_quasirandom} (also, see the first paper \cite{SX} where this conception was used). 
Namely, we have by \cite{LS_representations} (a similar result takes place for any simple algebraic group $\Gr$) that
\begin{equation}\label{f:d_min}	
	d_{\min} (\Gr) \gg_d q^r \,,
\end{equation}
where rank $r$ is the dimension of its maximal tori of $\Gr$ 
and $d$ is dimension of $\Gr$.

\bigskip

Let $\Pi_1 (\Gr(q)) \ge  \Pi_2 (\Gr(q)) \ge  \dots $ be sizes of  maximal proper parabolic subgroups of $\Gr(q)$. 
Consider the quantity 
\[
	P(\Gr(q)) := \min \{t ~:~ \forall H \le \Gr,\, |H| > t \implies H \mbox{ is parabolic} \} \,.
\]
In other words, $P(\Gr(q))$ 
coincides with 
size of the largest (by cardinality) non--parabolic subgroup.
The quantity depends on the concrete  Chevalley group $\Gr(q)$ 
(e.g., ${\rm P\Omega}^{+}_8 (q)$ contains the largest (by cardinality) parabolic subgroup $P$ and also  two large non--parabolic subgroups $\Omega_7(q)$, ${\rm Sp}_6 (q)$, depending on the parity of $q$,  $|\Omega_7(q)| \sim |{\rm Sp}_6 (q)| \sim q^{-1} |\Pi_1({\rm P\Omega}^{+}_8 (q))|$ see \cite[Table 6]{AB_large}). 
Nevertheless, we give a simple upper bound for $P(\Gr(q))$.
Our proof is hugely based on book \cite{KL_subgroups_book} (which in turn uses the famous Aschbacher Theorem \cite{Aschbacher}, see a good survey \cite{King}) and follows paper \cite{AB_large}, where the authors give a list of all maximal subgroups $H$ of Chevalley groups, having large size, namely, $|H| \ge |\Gr(q)|^{1/3}$. 
It is easy to see that usually maximal parabolic  subgroups of $\Gr(q)$ are even larger (clearly, $|B| \ge (|\Gr(q)| |H|)^{1/2}$)
and hence it is enough to check all "large"\, subgroups from  \cite{AB_large}.

\begin{lemma}
	Let $q$ be a  sufficiently large  number. 
	Then  we have $P({\rm PSL}_2 (q)) \le  2(q+1)$,\\ $P({\rm PSL}_3 (q)) \le q^3$,    
	and for $n\ge 4$ the following holds $P({\rm PSL}_n (q)) \le  q^{\frac{n(n+1)}{2}}$, provided $q$ is a non--square.
	\\ 
	Further, we consider $n \ge 3$ for ${\rm SU}_n (q)$, $n\ge 4$ 
	for ${\rm PSp}_n (q)$,
	$n\ge 7$ and $q$ is odd for ${\rm \Omega^{\eps}_n} (q)$, where $\eps = \pm$. 
	In all cases above with an odd $q$ and for all simple exceptional groups 
	one has  
\begin{equation}\label{f:P_2}
	q P (\Gr(q)) \le \Pi_1 (\Gr(q)) = \max \{|H| ~:~ H\le \Gr (q),\, H \neq \Gr (q)\} \,.
\end{equation}
\label{l:P_2}
\end{lemma}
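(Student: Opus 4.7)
The plan is to invoke the Aschbacher--Kleidman--Liebeck classification of maximal subgroups of classical groups \cite{Aschbacher,KL_subgroups_book} together with Alavi--Burness \cite{AB_large}, who list every maximal subgroup $M\le \Gr(q)$ with $|M|\ge |\Gr(q)|^{1/3}$ (and the parallel work for exceptional types), and reduce the lemma to an explicit finite case check against $\Pi_1(\Gr(q))/q$.

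The starting observation is the elementary estimate $|B|^2 \ge q^r|\Gr(q)|$, which follows from $|B|=q^{|\Phi^+|}(q-1)^r$ and $|\Gr(q)|\asymp q^{2|\Phi^+|+r}$; in particular every maximal parabolic satisfies $|P|\ge |B|\ge |\Gr(q)|^{1/2}\ge|\Gr(q)|^{1/3}$, and hence appears in the AB tables. Stripping the parabolic entries off these tables leaves an explicit short list of non-parabolic maximal subgroups $M$ with $|M|\ge|\Gr(q)|^{1/3}$, drawn from the Aschbacher classes $\mathcal{C}_2,\ldots,\mathcal{C}_8,\mathcal{S}$. For each remaining entry I would read off $|M|$ and compare it with $\Pi_1(\Gr(q))/q$, where $\Pi_1(\Gr(q))\asymp|\Gr(q)|/q^r$ is governed by the smallest-index maximal parabolic (stabilizer of a $1$-dimensional or isotropic subspace). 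The numerical hypotheses of the lemma -- $q$ odd for symplectic/orthogonal types, $q$ non-square for $\mathrm{PSL}_n$ with $n\ge 4$, and the lower bounds on $n$ -- are designed precisely to eliminate the few offenders: the subfield subgroups $\mathrm{PSL}_n(q^{1/2})$ and $\mathrm{PSU}_n(q^{1/2})$, the classical pair $\mathrm{Sp}_6(q),\Omega_7(q)$ inside $\mathrm{P}\Omega^{+}_8(q)$ (whose size is roughly $\Pi_1/q$ and is the actual critical constraint there), and a handful of sporadic $\mathcal{S}$-class examples.

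A non-parabolic subgroup $H\le\Gr(q)$ whose containing maximal subgroups are all parabolic must lie inside some maximal parabolic $P=U_P\rtimes L_P$ without containing any Borel of $\Gr(q)$. If $H\not\supseteq U_P$, then, since $U_P$ is a $p$-group, one has $|H\cap U_P|\le |U_P|/p$ and hence $|H|\le |P|/p$; combined with the observation that $\Pi_1/q\ge |P|/p$ in the base case $q=p$ (and a finer estimate using the $\mathbb{F}_q$-linear structure of $U_P$ for $q=p^s$, $s\ge 2$), this yields the desired bound. If instead $H\supseteq U_P$, then the image $\overline H\le L_P=P/U_P$ is non-parabolic in the smaller-rank group $L_P$, and induction on rank closes the case. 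The small-rank cases $\mathrm{PSL}_2(q)$ (via Dickson's classical subgroup list, giving $P\le 2(q+1)$) and $\mathrm{PSL}_3(q)$ (via direct inspection of the Kleidman--Liebeck tables, giving $P\le q^3$) are handled separately.

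The main obstacle is the case-by-case size comparison across all classical and exceptional families: although each individual check reduces to comparing explicit powers of $q$, careful bookkeeping is needed for the field-extension/subfield subgroups (whence the non-square hypothesis), for the parity of $q$ in orthogonal types (which governs the triality exceptions inside $\mathrm{P}\Omega^{+}_8$), and for a handful of sporadic $\mathcal{S}$-class subgroups. A secondary subtlety is the ``contained only in parabolic maximals'' case, which is not explicit in \cite{AB_large} and requires the Levi-induction sketched above, with extra care when $q$ is a higher prime power so that the crude $p$-group bound on $H\cap U_P$ loses a factor compared to $q$.
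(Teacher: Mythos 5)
Your proposal and the paper share the main device: reduce to the explicit lists of large maximal subgroups (Aschbacher/Kleidman--Liebeck tables, Alavi--Burness for classical types, Wilson for exceptional types, Dickson/Mitchell/Mwene for low rank), strip off the parabolic entries, and compare the remainder with $\Pi_1(\Gr(q))/q$. But the paper stops there: its proof only bounds sizes of \emph{maximal} subgroups and implicitly assumes that the largest non--parabolic subgroup is itself maximal. You have added a genuinely new step, the ``Levi induction'' for non--parabolic subgroups contained only in parabolic maximal subgroups, and this step is not in the paper. It is also necessary: the Levi factor $L_1 \cong \GL_{n-1}(q)$ embedded block--diagonally in $\SL_n(q)$ is non--parabolic, lies only inside the two opposite maximal parabolics $P_1, P_{n-1}$ (so it never appears among non--parabolic maximal subgroups), and has $|L_1| \sim q^{(n-1)^2}$. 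This already exceeds the stated bound $q^{n(n+1)/2}$ for every $n\ge 5$, and its image in $\PSL_3(q)$, of order $\sim q^4$, exceeds $q^3$; so the explicit $\PSL_n$ bounds in the lemma do not actually hold for $n\neq 4$, while inequality \eqref{f:P_2} survives (since $q^{(n-1)^2+1}\le q^{n^2-n}\sim \Pi_1(\PSL_n(q))$). In short, you have correctly identified a case the paper skips, and that case genuinely matters.

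However, your sketch of that step does not yet close it. If $H\not\supseteq U_P$, the bound $|H\cap U_P|\le |U_P|/p$ gives only $|H|\le |P|/p$, and for $q=p^s$, $s\ge 2$, this is weaker than the required $\Pi_1/q$ whenever $P=\Pi_1$; the promised ``finer estimate using the $\F_q$--linear structure of $U_P$'' is precisely the crux and is not supplied. Note that $H\cap U_P$ is a priori only an abelian $p$--subgroup, not an $\F_q$--subspace; to gain the full factor $q$ one must argue that the image $\overline H\le L_P$ carries enough of the torus to make $H\cap U_P$ invariant under $\F^*_q$--scaling, and for small $\overline H$ this is not automatic. The rank induction has a parallel issue: the Levi factor $L_P$ is a reductive group with a nontrivial central torus (typically $\GL_m\times\GL_{n-m}$ with a determinant constraint), not a simple Chevalley group, so the inductive hypothesis must first be formulated and proved in that larger generality before it can be invoked. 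As written, the proposal names the right obstruction but leaves the $q=p^s$ case and the precise form of the induction unresolved.
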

\begin{proof} 
	We use Tables 3.5A--3.5F from \cite{KL_subgroups_book} to determine sizes of maximal subgroups of $\Gr(q)$, calculations from paper \cite{AB_large}, 
	as well as the Aschbacher classification Theorem, which says that every maximal subgroup of a classical group belong to one of the geometric classes 
	$\mathcal{C}_1$--$\mathcal{C}_8$ and an additional exceptional class $\mathcal{S}$. 
	For exceptional groups we consult book \cite{Wilson}.
	Due to the existence of isomorphisms between low--dimensional classical groups (see \cite[Proposition 2.9.1]{KL_subgroups_book}, for example), we may assume without loosing of the generality that $n$ satisfies the stated lower bounds.

	Let $d=(n,q-1)$, $\a = (2,q-1)$ and let us begin with ${\rm PSL}_n (q)$.
	For small $n$ it follows from the classification of subgroups of ${\rm PSL}_2 (q)$ (see, e.g., \cite{Dickson}, we use the assumption that $q$ is a non--square to avoid the subgroup ${\rm PGL}_2 (\sqrt{q})  \subset  {\rm PSL}_2 (q)$, say), 
	further for  ${\rm PSL}_3 (q)$ (we apply the assumption that $q$ is a non--square to avoid  the subgroup ${\rm PSU}_3 (q)$, say) see \cite{Mitchell}, for ${\rm PSU}_3 (q)$, ${\rm PSp}_4 (q)$ with odd $q$, again, see \cite{Mitchell} and, finally, for ${\rm PSL}_4 (q)$ with even $q$, see \cite{Mwene} 
	(here we appeal to the fact that that ${\rm PSL}_4 (q)$ contains ${\rm PSp}_4 (q)$ having size less than $q^{4(4+1)/2}$).
	Now let $n\ge 4$ and let us do not consider subgroups of the class $\mathcal{S}$ at the beginning. 
	In this case the only subgroups belonging to Aschbacher's class $\mathcal{C}_1$ are maximal parabolic subgroups $\Pi_m$ with 
	\begin{equation}\label{f:Pi_m}
		|\Pi_m| = d^{-1} q^{m(n-m)} (q-1) |\SL_m (q)| |\SL_{n-m} (q)| \sim q^{n^2-nm+m^2-1} \ge q^{\frac{3n^2}{4}-1} > q^{\frac{n(n+1)}{2}} \,.
	\end{equation}
	For $H\in \mathcal{C}_2$, we have with $t\ge 2$ that $|H| = \frac{(q-1)^{t-1} t!}{d} |\SL_{n/t} (q)|^t \ll q^{n^2/t-1}$ and this is smaller than $q^{\frac{n(n+1)}{2}}$.
	For $H\in \mathcal{C}_3$, one has  $|H| = \frac{k}{d (q-1)} |{\rm GL}_{n/k} (q^k)|$, where $k|n$, and $k$ is a prime number.
	Thus again $|H| \le q^{n^2/k-1}$. 
	If $H\in \mathcal{C}_4$, then $|H| = d^{-1} |\SL_a (q)| |\SL_{n/a} (q)| (q-1,a,n/a)$, where $2\le a <n/2$.
	In other words, $|H| \ll q^{n^2/a^2+a^2-2} \le q^{n^2/4+2} \le q^{n(n+1)/2}$. 
	Further, for $H\in \mathcal{C}_5$, we have $|H| = (q_0-1)^{-1} (q_0-1,(q_0^k-1)d^{-1}) |\SL_n (q_0)|$ with $q=q_0^k$ and $k$ is a prime number.
	Hence $|H| \ll q_0^{n^2-2} \le q^{(n^2-2)/k} \le q^{n(n+1)/2}$. 
	If $H\in \mathcal{C}_6$, then $|H| \le r^{2m} |{\rm Sp}_{2m} (q)|$, where $n=r^m$, $r|(q-1)$ and $r$ is an odd prime number. 
	It follows that $|H| \le n^2 q^{m(2m+1)}$ and this quantity is very small. 
	For $H\in \mathcal{C}_7$, one has  $|H|< |\SL_a(q)|^t/t!$, $n=a^t$, $a\ge 3$, $t\ge 2$ and again this is very small. 
	Finally, if $H\in \mathcal{C}_8$, then either $H = {\rm PSp}_n (q)$ (and we have $|{\rm PSp}_n (q)| \le q^{\frac{n(n+1)}{2}}$)
	or $|H| = |{\rm SO}^\eps_n (q)| \le 2 \a q^{n(n-1)/2}$ or $H={\rm U}_n(q_0)$, $q=q_0^2$ and $n\ge 3$. 
	In view of \eqref{f:Pi_m} we see that $P({\rm PSL}_n (q)) \le  q^{\frac{n(n+1)}{2}}$ provided $n\ge 4$.

	To finish the proof of our result in the case of ${\rm PSL}_n (q)$ it remains to consider subgroups of the class $\mathcal{S}$.
	We have $q^{\frac{n(n+1)}{2}} \ge q^{\frac{n^2-1}{3}}$ and $|\Pi_1| = q^{n^2-n} \ge q^{\frac{n(n+1)}{2}+1}$. 
	In view of  \cite[Theorem 4, Table 6]{AB_large} for large $q$ 
	(in the case of {\it all} groups ${\rm PSL}_n (q)$, ${\rm SU}_n (q)$,  ${\rm PSp}_n (q)$, ${\rm \Omega^{\eps}_n} (q)$)
	just three subgroups survive, namely,  
	${\rm P\Omega}^{+}_8 (q)$ (it contains ${\rm \Omega}_7 (q)$, ${\rm Sp}_6 (q)$ and smaller subgroups), ${\rm \Omega}_7 (q)$, 
	and ${\rm PSp}_{6} (q)$  
	(the last two contain ${\rm G}_2(q)$, $|{\rm G}_2(q)|= q^6(q^2-1)(q^6-1) \le q^{14}$). 
	The group ${\rm G}_2(q)$ in ${\rm \Omega}_7 (q)$,  ${\rm PSp}_{6} (q)$ is too small because it is easy to see that 
	$\Pi_1 ({\rm \Omega}_{7} (q)) \sim q^{16} \sim \Pi_1 ({\rm PSp}_{6} (q))$ (or consult estimate \eqref{f:Pi_m_S}, \eqref{f:Pi_m_O}  below). 
	Similarly, for 	${\rm P\Omega}^{+}_8 (q)$ sizes of  ${\rm \Omega}_7 (q)$, ${\rm Sp}_6 (q)$ do not exceed $q^{-1} \Pi_1 ({\rm P\Omega}^{+}_8 (q))$. 
	Thus indeed $P({\rm PSL}_n (q)) \le  q^{\frac{n(n+1)}{2}}$  for $n\ge 4$
	and we have proved \eqref{f:P_2} in the case $\Gr(q) = {\rm PSL}_n (q)$.

	In the general case it is sufficient to have deal with subgroups of the classes $\mathcal{C}_1$--$\mathcal{C}_8$ and we begin with parabolic subgroups. 
	For such subgroups we have analogues of formula \eqref{f:Pi_m}, namely, (see \cite[Propositions 4.1.18--4.1.20]{KL_subgroups_book})
\begin{equation}\label{f:Pi_m_U}
	|\Pi_m ({\rm SU}_{n} (q))| \sim q^{2nm -3m^2 +2} |{\rm L}_m (q^2)| |{\rm U}_{n-2m} (q)| \sim q^{n^2-2nm+3m^2-1} \,, 
\end{equation}
\begin{equation}\label{f:Pi_m_S}
	|\Pi_m ({\rm PSp}_{n} (q))| = q^{nm + m/2-3m^2/2} (q-1) |{\rm PGL}_m (q)| |{\rm PSp}_{n-2m} (q)| \sim q^{\frac{n^2-2nm+n+3m^2-m}{2}} \,,
\end{equation}
	and for $m\le \frac{n}{2}$ (we do not consider smaller parabolic subgroups)  one has 
\begin{equation}\label{f:Pi_m_O}
	|\Pi_m ({\rm \Omega^{\eps}_n} (q))| \sim q^{nm - m/2-3m^2/2} |{\rm GL}_m (q)| |{\rm \Omega^\eps_{n-2m}} (q)| \sim q^{\frac{n^2-2nm-n+3m^2+m}{2}} \,.
\end{equation}
	Analysing Tables 3.5A--3.5F from \cite{KL_subgroups_book} and using \cite[Propositions 4.1.3, 4.1.4, 4.1.6]{KL_subgroups_book}, one can easily see that  others subgroups of the class $\mathcal{C}_1$ are smaller for these parabolic groups and have form 
	${\rm GU}_{m} (q) \perp {\rm GU}_{n-m} (q)$, ${\rm PSp}_{m} (q) \perp {\rm PSp}_{n-m} (q)$, ${\rm O^\eps_{m}} (q) \perp {\rm O^\eps_{n-m}} (q)$,
	correspondingly,  with some additional restrictions on $n,m,\eps$ (e.g., $2\le m < n/2$, $m$ is even for  ${\rm PSp}_{m} (q) \perp {\rm PSp}_{n-m} (q)$. 
	In the case of the orthogonal group we use the assumption that $q$ is odd. 
	More precisely, using \eqref{f:Pi_m_U}---\eqref{f:Pi_m_O}, we check that \eqref{f:P_2} holds at least for all subgroups of the class $\mathcal{C}_1$.

	After that we apply the results from \cite{AB_large} (notice that $q^{-1} \Pi_1 (\Gr(q)) \ge |\Gr(q)|^{1/3}$ and thus it requires to use the list of the subgroups from this paper) to show that almost all other subgroups of the classes $\mathcal{C}_2$--$\mathcal{C}_8$ are obviously small.
	In the case of  ${\rm SU}_{n} (q)$ it remains to check ${\rm Sp}_{n} (q) \in \mathcal{C}_5$ with  $|{\rm Sp}_{n} (q)|\le q^{n(n+1)/2} \le q^{-1} \Pi_1 ({\rm SU}_{n} (q))$.
	If $\Gr(q) = {\rm PSp}_{n} (q)$, then all subgroups are smaller than $q^{-1} \Pi_1 (\Gr(q))$.
	Finally, in the case of ${\rm \Omega^{\eps}_n} (q)$ it remains check 
	$\mathcal{C}_2$--subgroup $H$ of size $t! |{\rm \Omega^{\eps'}_{n/t}} (q)|^t \ll q^{n(n-t)/2t}$, $t|n$, $t\ge 2$ and $H = {\rm GL}_{n/2} (q)$  
	and both of these subgroups  are less than $q^{-1} \Pi_1 (\Gr(q))$ because $n\ge 6$.
%
%
	The class $\mathcal{C}_8$ exists only for ${\rm PSp}_{n} (q)$ and it coincides with the only subgroup ${\rm O^{\pm}_{n}} (q)$, $q$ is even
	with $|{\rm O^{\pm}_{n}} (q)| \le q^{-1} \Pi_1 ({\rm PSp}_{n} (q))$. 

	It remains to consider the exceptional groups. 
	In this case we use \cite[Theorem 5, Table 2]{AB_large}, which says that any maximal subgroup $H$ of $\Gr(q)$ of size $|H| \ge |\Gr(q)|^{1/3}$ is either a maximal parabolic subgroup or belongs to a certain list, see  \cite[Table 2]{AB_large} 
	(again it is easy to check that the condition $q^{-1} \Pi_1 (\Gr(q)) \ge |\Gr(q)|^{1/3}$ takes place). 
	Analysing this Table, one can see that sizes of all non--parabolic subgroups of the exceptional groups do not exceed $|B|$ with four exceptions:
	${\rm F}_{4} (q)$ (the largest subgroups are ${\rm B}_{4} (q)$, ${\rm C}_{4} (q)$), further, ${\rm E}^\eps_{6} (q)$ (the largest subgroup is ${\rm F}_{4} (q)$),
	${\rm E}_{7} (q)$ (the largest subgroup is $(q-\eps) {\rm E}^\eps_{6} (q)$) 
	and, finally, ${\rm E}_{8} (q)$ with the largest  subgroup  ${\rm A}_{1} (q) {\rm E}_{7} (q)$.
	For ${\rm F}_{4} (q)$ consult \cite[Section 4.5.9]{Wilson} to see that there is a parabolic subgroup $H\le {\rm F}_{4} (q)$ such that 
	$$|H| = q^{15} (q-1) |{\rm Sp}_{6} (q)| \sim q^{37} \sim q |{\rm B}_{4} (q)| \sim q |{\rm C}_{4} (q)| \,,$$
	further, for ${\rm E}^\eps_{6} (q)$ see \cite[Section 4.6.4]{Wilson} and \cite{KW_E6}, where it was proved that there exists  a parabolic subgroup of size 
	$q^{25} (q-1) |{\rm L}_{2} (q)| |{\rm L}_{5} (q)| \sim q^{53} \sim q |{\rm F}_{4} (q)|$.
	Finally,  if we consider ${\rm E}_{8} (q)$, then by \cite[Section 4.7.2]{Wilson} this group contains a subgroup of size $\gg q^{58} |{\rm E}_{7} (q)|$ and this is much larger than $q|{\rm A}_{1} (q)| |{\rm E}_{7} (q)|$, if we take ${\rm E}_{7} (q)$, then, similarly, by \cite[Section 4.7.3]{Wilson} we see that   $q^2 |{\rm E}^\eps_{6} (q)|$ is small. 
	One can use another way to prove that the maximal (by size) maximal parabolic subgroup is large: just analyse the Dynkin diagrams for ${\rm F}_{4} (q)$, ${\rm E}^\eps_{6} (q)$, ${\rm E}_{7} (q)$ and ${\rm E}_{8} (q)$. 
This completes the proof.
$\hfill\Box$
\end{proof}


\bigskip 

We need a simple general lemma (a similar result can be found in \cite{Hamidoune}). 

\begin{lemma}
	Let $\Gr$ be a group and $\G_1, \G_2 \le \Gr$.
	Then 
\[
	\max_{x,y\in \Gr} |x\G_1 \cap \G_2 y| = \max_{x\in \Gr}  |x\G_1 \cap \G_2 x| \,,
\]
	and $|\G_1 \cap \G_2| \ge |\G_1| |\G_2| /|\Gr|$. 
\label{f:r_xy}
\end{lemma}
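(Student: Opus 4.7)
The plan is to prove the two assertions separately, both by elementary coset manipulations, so I do not expect any serious obstacle.

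For the first equality, the inequality $\max_{x,y}|x\G_1\cap \G_2 y|\ge \max_x |x\G_1\cap \G_2 x|$ is immediate. For the reverse direction, I would take any pair $(x,y)\in \Gr\times \Gr$ at which the left-hand maximum is attained. If the intersection $x\G_1\cap \G_2 y$ is empty there is nothing to prove. Otherwise pick any $z\in x\G_1\cap \G_2 y$ and write $z=x\gamma_1=\gamma_2 y$ with $\gamma_1\in\G_1$ and $\gamma_2\in\G_2$. The crucial observation is that $z\G_1=x\gamma_1\G_1=x\G_1$ because $\gamma_1\G_1=\G_1$, and similarly $\G_2 z=\G_2\gamma_2 y=\G_2 y$. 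Therefore
\[
x\G_1\cap \G_2 y \;=\; z\G_1\cap \G_2 z \,,
\]
which realizes the same cardinality by a single element $x=y=z$. This establishes the desired equality.

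For the lower bound $|\G_1\cap \G_2|\ge |\G_1||\G_2|/|\Gr|$, I would invoke the standard product formula for subgroups of a finite group: the natural map $\G_1\times\G_2\to \G_1\G_2$, $(g_1,g_2)\mapsto g_1g_2$, has all fibers in bijection with $\G_1\cap\G_2$ (via $(g_1,g_2)\mapsto (g_1h,h^{-1}g_2)$ for $h\in\G_1\cap\G_2$), whence $|\G_1\G_2|\cdot|\G_1\cap\G_2|=|\G_1|\cdot|\G_2|$. Since $\G_1\G_2\subseteq \Gr$, we conclude
\[
|\G_1\cap \G_2| \;=\; \frac{|\G_1||\G_2|}{|\G_1\G_2|}\;\ge\; \frac{|\G_1||\G_2|}{|\Gr|} \,,
\]
as claimed.

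The only point requiring a bit of care is keeping track of the fact that left cosets of $\G_1$ and right cosets of $\G_2$ must be transformed on the correct side (i.e.\ multiplying $z$ on the right by $\G_1$ and on the left by $\G_2$); this is exactly what makes the substitution $x,y\mapsto z$ in the first part legitimate.
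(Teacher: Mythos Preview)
Your proof is correct and follows essentially the same route as the paper. For the first assertion, the paper also picks an element $c\in x\G_1\cap\G_2 y$ and uses the coset identities $c\G_1=x\G_1$, $\G_2 c=\G_2 y$ (phrased there as $x\G_1\cap\G_2 y=((x\G_1 x^{-1})\cap\G_2)c$); for the second assertion, your product-formula argument $|\G_1\cap\G_2|=|\G_1||\G_2|/|\G_1\G_2|\ge|\G_1||\G_2|/|\Gr|$ is exactly the paper's ``alternative way''.
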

\begin{proof} 
	If the intersection $x\G_1 \cap \G_2 y$ is empty, then there is nothing to prove. 
	Otherwise for any $c\in x\G_1 \cap \G_2 y$  one has 
\[
	x\G_1 \cap \G_2 y = ((x \G_1 x^{-1}) \cap \G_2) c = c ((y^{-1} \G_2 y) \cap \G_1)
\]
	as required.

	Now from the Dirichlet principle there is $x\in \Gr$ such that $A:=x\G_1 \cap \G_2$ has size at least $|\G_1| |\G_2|/|\Gr|$.
	But $A\subseteq \G_2$ and hence $A^{-1} A \subseteq \G_1 \cap \G_2$. 
	It remains to notice that  $|A^{-1} A| \ge |A| \ge |\G_1| |\G_2|/|\Gr|$. 
	An alternative way of the proof is just use the formula  $|\G_1 \cap \G_2| = |\G_1| |\G_2|/ |\G_1 \G_2| \ge |\G_1| |\G_2|/|\Gr|$.
		This completes the proof.
	$\hfill\Box$
\end{proof}

\bigskip 

Now we are ready to prove a result on 
an 
upper bound for $|P\cap P^g|$ for parabolic subgroups $P$ of $\Gr(q)$. 

\begin{lemma}
	Let $\Gr (q)$ be a Chevalley group and $P \subset \Gr (q)$ be a parabolic subgroup. 
	Then for any $g\notin P$ one has 
\begin{equation}\label{f:P^g_cap_P}
	r_{PgP} (x) \le \frac{2|P|}{q} \quad \quad \mbox{ for all } \quad \quad x\in \Gr(q) \,. 
\end{equation}
\label{l:P^g_cap_P} 
\end{lemma}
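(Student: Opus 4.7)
The plan is first to identify $r_{PgP}(x)$ group-theoretically. Fixing any representation $x = p_1 g p_2 \in PgP$, a second representation $x = p_1' g p_2'$ produces $h := p_1^{-1} p_1' = g p_2 (p_2')^{-1} g^{-1}$, which automatically lies in $P \cap gPg^{-1} = P \cap P^g$; conversely each such $h$ yields a valid new pair. Hence $r_{PgP}(x) = |P \cap P^g|$ for $x \in PgP$ (and zero otherwise), and $|PgP|\cdot |P \cap P^g| = |P|^2$. So the desired bound $r_{PgP}(x) \le 2|P|/q$ is equivalent to $|PgP| \ge q|P|/2$.

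Next I would invoke the Bruhat decomposition recalled in Section \ref{sec:Chevalley}. Writing $P = P_J = BW_J B$ and choosing $w_0 \in W$ of minimal length in the $(W_J,W_J)$-double coset associated to $g$, the hypothesis $g\notin P$ forces $w_0 \notin W_J$, so $l(w_0)\ge 1$. Since $P g P = BW_J w_0 W_J B$, it decomposes as the disjoint union of Bruhat cells $Bw'B$ indexed by $w' \in W_J w_0 W_J$, and each has size $|B|q^{l(w')}$. This reduces the estimate to the combinatorial inequality
\[
\sum_{w' \in W_J w_0 W_J} q^{l(w')} \;\ge\; \frac{q}{2}\sum_{w \in W_J} q^{l(w)}.
\]

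For this combinatorial step I would apply the standard Coxeter-theoretic fact that each $w' \in W_J w_0 W_J$ has a \emph{unique} factorization $w' = u w_0 v$ with $v \in W_J$ and $u$ a minimal-length representative of $W_J / W_K$, where $W_K := W_J \cap w_0 W_J w_0^{-1}$, \emph{and} $l(w') = l(u)+l(w_0)+l(v)$. This additivity turns the sum into a product, yielding
\[
\sum_{w' \in W_J w_0 W_J} q^{l(w')} \;=\; q^{l(w_0)}\cdot \frac{S_J^{\,2}}{S_K}\,, \qquad S_J := \sum_{w\in W_J} q^{l(w)},\;\; S_K := \sum_{w\in W_K} q^{l(w)}.
\]
Since $S_J/S_K \ge 1$ and $q^{l(w_0)}\ge q$, one gets $|PgP| \ge q|P|$ — even stronger than needed, giving $|P\cap P^g| \le |P|/q$.

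The main obstacle is the careful justification of the length-additive double-coset factorization in Coxeter groups; this is standard but must be invoked precisely (either derived inductively from relation \eqref{f:B_inclusions} or imported from a standard reference). A secondary subtlety is verifying that the Bruhat cells packaged by $W_J w_0 W_J$ are genuinely disjoint and that the projection from $W$ to $P_J$-double cosets really is the quotient by $(W_J,W_J)$-action; both facts are classical consequences of \eqref{f:Bruhat}.
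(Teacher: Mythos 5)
Your proof is correct and takes a genuinely different route from the paper. You first observe that $r_{PgP}(x)$ is constant (equal to $|P\cap P^g|$) on the double coset $PgP$, so that $|PgP|\,|P\cap P^g|=|P|^2$, and you then bound $|PgP|$ from below using the Bruhat decomposition together with the classical length-additive factorization of parabolic double cosets in Coxeter groups (the Kilmoyer/Geck--Pfeiffer/Carter lemma: with $w_0$ the minimal-length element of $W_J w_0 W_J$, every $w'$ in that double coset factors uniquely as $u w_0 v$ with $l(w')=l(u)+l(w_0)+l(v)$). This turns the Poincar\'e sum into a product $q^{l(w_0)}S_J^2/S_L$ and yields $|PgP|\ge q\,|P|$, hence $|P\cap P^g|\le |P|/q$ --- in fact a factor of $2$ better than the stated bound. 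The paper instead works directly with the refined Bruhat cells $BwU''_w$, reduces via Lemma~\ref{f:r_xy} to the case $g\in W^J\setminus W_J$, splits the intersection sum by $l(v_1)\gtrless l(v_2)$ (which is where the factor $2$ enters), and uses only the elementary inclusion \eqref{f:B_inclusions} and the identity $l(gv)=l(g)+l(v)$ for $g\in W^J$, $v\in W_J$; it is self-contained but less sharp. The trade-off is clear: your argument is shorter and gives the optimal constant, but it imports a nontrivial Coxeter-theoretic theorem that the paper deliberately avoids. One point to nail down if you write this up: the precise statement of the factorization you quote uses the group $W_J\cap w_0 W_J w_0^{-1}$, which differs by conjugation from the $W_L=W_J\cap w_0^{-1}W_J w_0$ appearing in the usual references --- either version works here because $w_0$ conjugates one standard parabolic to the other preserving lengths, but the choice should be made consistent with whichever reference you cite.
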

\begin{proof} 
	In view of Lemma \ref{f:r_xy} it is enough to estimate $|P \cap P^g|$. 
	Let $P=P_J$. 
	From Bruhat decomposition \eqref{f:Bruhat} we can assume that $g\in W$ and moreover in view of \eqref{f:B_inclusions} and Lemma \ref{f:r_xy} we can assume that $g\in W^J$, $g\notin W_J$.  
	
	First of all, let us obtain \eqref{f:P^g_cap_P}
	for the Borel subgroup $B$ (in this case $g$ just from $W$).  
	The equation $Bg=gB$ can be rewritten as $Bg = HU'_g g U''_g$ and hence by \eqref{f:Bruhat'} it has $|H U'_g| = |B|/ |U''_g| = |B| q^{-l(g)}$ solutions. 
	Clearly, $l(g) \ge 1$ and the result follows (in this case we do not even need  the constant two in inequality \eqref{f:P^g_cap_P} and this is absolutely sharp, take, e.g.,  $\Gr(q)=\SL_2 (\F_q)$). 
	It is easy to see that inequality \eqref{f:P^g_cap_P} is, actually,  equality in this case.  

	Now let $P$ be an arbitrary parabolic subgroup. 
	Using the Bruhat decomposition and the arguments as in the case of the Borel subgroup, we obtain 
\[
	|P \cap P^g| = \sum_{v_1,v_2 \in W_J} |gBv_1 U''_{v_1} \cap B v_2 U''_{v_2} g| 
		\le 
			2 \sum_{v_1,v_2 \in W_J,\, l(v_2) \le l(v_1)} |gBv_1 U''_{v_1} \cap B v_2 U''_{v_2} g|
			=
\]
\begin{equation}\label{tmp:10.12_1} 
			=
			2 |B|^{-2} \sum_{v_1,v_2 \in W_J,\, l(v_2) \le l(v_1)} q^{l(v_1) + l(v_2)} |gBv_1 B \cap B v_2 B g| \,.
\end{equation}
	Now using \eqref{f:B_inclusions}, we see that for any $v\in W_J$ one has  $gBv \subseteq BvB \cup BgvB$.
	Since $g \notin P$, we get $gBv \subseteq BgvB$ and hence any element $gbv_j$, $b\in B$, $j=1,2$ can be written as $b_1 gv_j b_2$, $b_1,b_2 \in B$.
	The same is true for $vBg$, of course.  
	Whence recalling \eqref{tmp:10.12_1}, we get 
\[
	|P \cap P^g| \le 2 \sum_{v_1,v_2 \in W_J,\, l(v_2) \le l(v_1)} q^{l(v_1) + l(v_2)} |gv_1 B \cap B v_2 g| \,.
\]
	Again, applying the Bruhat decomposition and transforming $gv_1 B$ as $H U'_{gv_1} gv_1 U''_{gv_1}$, we derive
\[
	|P \cap P^g| \le 2 |B| \sum_{v_1,v_2 \in W_J,\, l(v_2) \le l(v_1),\, gv_1 = v_2 g} q^{l(v_1) + l(v_2) - l(gv_1)}
		\le
\]
\[
		\le
			2 |B| \sum_{v_1,v_2 \in W_J,\, l(v_2) \le l(v_1),\, gv_1 = v_2 g} q^{2l(v_1) - l(gv_1)} \,.
\]
	But  $g\in W^J$ and hence $l(gv_1) = l(g) + l(v_1)$.
	Clearly, $l(g) \ge 1$ because otherwise $g\in W_J$. 
	In view of \eqref{f:size_G} it gives us 
\[
	|P \cap P^g| \le 2 |B| q^{-1} \sum_{v_1,v_2 \in W_J,\, l(v_2) \le l(v_1),\, gv_1 = v_2 g} q^{l(v_1)} 
	\le
	2|B| q^{-1} \sum_{v \in W_J}  q^{l(v)} = 2|P| q^{-1} 
\]	
	as required.
$\hfill\Box$
\end{proof}

\bigskip

It is easy to see that estimate \eqref{f:P^g_cap_P} is tight up to constants  (consider parabolic subgroups of  $\SL_n(\F_q)$, say).

We finish this Section by a lemma in the spirit of the well--known result of Frobenius \cite{Frobenius} on the representation of $\SL_n(\F_q)$.


\begin{lemma}
	Let $\Gr (q)$ be a Chevalley group and $P \subseteq \Gr (q)$ be a parabolic subgroup. 
	Suppose that $\rho$ is an arbitrary non--trivial irreducible representation of $P$ such that $\FF{H} (\rho) \neq 0$.
	Then $d_\rho \ge \frac{q-1}{2}$. 
\label{l:Frobenius_Chevalley}
\end{lemma}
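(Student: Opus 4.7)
The plan is to translate $\FF{H}(\rho)\neq 0$ into the statement that the representation space $V_\rho$ of $\rho$ contains a non-zero $H$-invariant vector $v$: indeed $|H|^{-1}\FF{H}(\rho)=|H|^{-1}\sum_{h\in H}\rho(h)$ is the orthogonal projection of $V_\rho$ onto $V_\rho^H$. Having fixed such a $v$, the strategy is to exhibit many linearly independent vectors in $V_\rho$ obtained by decomposing $v$ along the abelian root subgroups $U_r\cong \F_q$ sitting inside $P$, and then using the $H$-action to produce a full orbit of non-zero eigenvectors.

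First I would invoke the structural fact recalled in Section~\ref{sec:Chevalley} that $P=P_J$ is generated by $H$ together with the root subgroups $\{U_r\}_{r\in \Phi^{+}\cup(-\Phi_J^{+})}$. If $v$ were $U_r$-invariant for every such $r$, then combined with its $H$-invariance this would force $v$ to be $P$-invariant; by irreducibility of $\rho$ the whole space would reduce to $\C v$, making $\rho$ the trivial representation and contradicting the hypothesis. Hence there exists at least one root $r$ with $U_r\subseteq P$ for which $v$ fails to be $U_r$-invariant.

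Next, decompose $v=\sum_{\psi\in\widehat{U_r}} v_\psi$ into the additive character eigenspaces of $\rho|_{U_r}$. The Chevalley commutation $huh^{-1}=r(h)\cdot u$ for $u\in U_r$, $h\in H$ shows that $\rho(h)$ sends the $\psi$-eigenspace of $\rho|_{U_r}$ into the $(h\cdot\psi)$-eigenspace, where $(h\cdot\psi)(u):=\psi(r(h)^{-1}u)$. Comparing components in the equality $\rho(h)v=v$ yields $v_{h\cdot\psi}=\rho(h)v_\psi$ for every $h\in H$. By the previous step there is a non-trivial $\psi_0$ with $v_{\psi_0}\neq 0$, so every character in the $H$-orbit of $\psi_0$ produces a non-zero vector in its own $U_r$-eigenspace; these vectors are linearly independent, hence $d_\rho\ge |H\cdot \psi_0|$.

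Finally I would estimate this orbit. Its stabilizer in $H$ is precisely $\ker r$, so $|H\cdot\psi_0|=|r(H)|$. Using the standard Chevalley coroot $h_r\colon \F_q^*\to H$ together with the identity $r(h_r(t))=t^2$, one gets $r(H)\supseteq (\F_q^*)^2$, whence $|r(H)|\ge (q-1)/\gcd(2,q-1)\ge (q-1)/2$. Combining everything gives $d_\rho\ge (q-1)/2$. The main obstacle is purely structural bookkeeping---verifying the generation of $P$ by $H$ and the listed root subgroups, and the coroot identity in whichever form of the Chevalley group $\Gr(q)$ is under consideration---but both are standard and readily available from the material in Section~\ref{sec:Chevalley}.
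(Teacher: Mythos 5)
Your argument is correct, and it packages the same underlying Chevalley--group facts differently from the paper. The paper works by first reducing to the Borel subgroup $B$ (showing $\rho$ cannot be trivial on $B$ when $P=P_J$ is larger), and then splits into cases on $B$: if $\rho$ is non--trivial on some $U_r$ it fixes an element $g\in U_r$ and conjugates it to $g^{\lambda^2}$ by the $\SL_2$ torus, so the eigenvalues of $\rho(g)$ are permuted by $x\mapsto x^m$ over the $(q-1)/2$ quadratic residues; if instead $\rho$ is trivial on $U$ but not on $H$, the paper uses $\FF{H}(\rho)\neq 0$ to extract a $\rho(h_*)$--fixed subspace that is then $B$--invariant, contradicting irreducibility. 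You instead use the hypothesis $\FF{H}(\rho)\neq 0$ up front to produce an $H$--fixed vector $v$, rule out $v$ being $P$--fixed by irreducibility (so it is moved by some $U_r\subseteq P$), decompose $v$ along additive characters of $U_r$, and count the $H$--orbit of a non--trivial character using $r(h_r(t))=t^2$. This handles the general parabolic in one shot with no case split and directly at the level of eigenvectors rather than eigenvalues; the paper's route is closer to the classical Frobenius--style argument but requires separately dispatching the case $\rho\vert_U=1$ and the reduction from $P$ to $B$. The two counting arguments are dual: "eigenvalues of $\rho(g)$ permuted by scaling" and "eigencharacters of $\rho\vert_{U_r}$ permuted by the $H$--action" give the same bound $(q-1)/\gcd(2,q-1)\ge(q-1)/2$. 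Both rely on identical structural input (generation of $P_J$, the root--subgroup action of $H$, and the coroot identity), so neither is more elementary, but your version is the more economical presentation.
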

\begin{proof} 
	At the beginning let $P$ be a Borel subgroup $B$
	and suppose that 
	$\rho(h) = 1$ for any $h\in H$.	
	We know that $B=UH$ and thus there is $r\in \Phi^{+}$ such that $\rho(U_r) \neq 1$. 
	Since there is a canonical homomorphism from $\SL_2(\F_q)$ onto $\langle U_r, U_{-r}\rangle$, where $r\in \Phi$ is an arbitrary and 
\[
\left( {\begin{array}{cc}
		\la  & 0 \\
		0 & \la^{-1} \\
\end{array} } \right) 
\left( {\begin{array}{cc}
		1 & t \\
		0 & 1 \\
\end{array} } \right) 
\left( {\begin{array}{cc}
	\la^{-1} & 0 \\
	0 & \la \\
	\end{array} } \right) 
=\left( {\begin{array}{cc}
	1 & \la^{2} t \\
	0 & 1 \\
	\end{array} } \right) 
\]	
	we see that, say, $g:= (11|01)$ is conjugated with $g^m$, where $m$ runs over all quadratic residues of $\F^*_q$. 
	In other words, the operation $x\to x^m$ permutes all eigenvalues of $\rho(g)$ and hence the dimension $d_\rho$ is at least $\frac{q-1}{2}$
	(strictly speaking, the arguments above hold for $\F_p$ but it is easy to show that for $\F_q$ a similar method works, see, e.g., \cite[Proposition 8.10]{Breuillard_lectures}). 

	Now we can assume that $\rho(u)=1$ for any $u\in U$ (because otherwise we can apply the arguments above)
	but there is $h_* \in H$ such that $\rho(h_*) \neq 1$.
	As we know $H$ is an abelian group equals  the product of $l=|\Pi|$ cyclic subgroups which are isomorphic to $\F^*_q$.   
	Clearly, for any $h\in H$ one has  $\FF{H} (\rho) = \rho(h) \FF{H} (\rho) = \FF{H} (\rho) \rho(h)$ in particular, 
	$\FF{H} (\rho) = \rho(h_*) \FF{H} (\rho) =  \FF{H} (\rho)  \rho(h_*)$.  
	Thus the matrix $\rho(h_*)$ has a non--trivial invariant subspace $\mathcal{L}$, corresponding to the eigenfunctions with the eigenvalue equal one because
	otherwise $\FF{H} (\rho) = 0$. 
	Obviously, matrices $\rho(h)$, $h\in H$ commute with $\rho(h_*)$ and hence $\rho(h) \mathcal{L} \subseteq \mathcal{L}$. 
	Since $B=HU$, it follows that $\rho(b)\mathcal{L} \subseteq \mathcal{L}$ for any $b\in B$. 
	But then we find an invariant  subspace of $\rho$, contradicting our assumption.


	Finally, 
	let $P=P_J$ be an arbitrary parabolic subgroup.
	Then we have $\rho$ is $1$ on $B$ because otherwise we can use the previous arguments.
	We know that
	$P_J = \langle B,  \{w_j\}_{j\in J} \rangle = \langle B, \prod_{j=1}^J w_j \rangle = \langle B, (\prod_{j=1}^J w_j) B (\prod_{j=1}^J w_j)^{-1} \rangle$ 
	and hence $\rho$ is $1$ on  $P$.
	This completes the proof.
$\hfill\Box$
\end{proof}

\section{Growth relatively to parabolic subgroups}
\label{sec:proof}

Now let us obtain a result on growth of subsets from  $\Gr(q)$ under left/right multiplications by  parabolic subgroups. 

For any sets $A,B,C$ put $\sigma_{A} (B,C) := \sum_{x \in A} r_{BC} (x)$. 
Bounds in  Theorem \ref{t:growth_in_P} below depend on the quantities $\sigma_{P} (A^{-1}, A)$, $\sigma_{P} (A, A^{-1})$, where $A$ is an arbitrary subset of $\Gr(q)$ and $P$ is a parabolic subgroup. The sense of these expressions is rather obvious, namely,   $\sigma_{P} (A^{-1}, A)$ and $\sigma_{P} (A, A^{-1})$ are small if the intersection of $A$ with left/right cosets of $P$ is small in average.

\begin{theorem}
	Let $\Gr (q)$ be a Chevalley group and $P \subset \Gr (q)$ be a parabolic subgroup. 
	Then for any set $A\subseteq \Gr (q)$ one has either 
	$$|AP| |A\cap P| \ge 2^{-1} |A|^2$$ 
	or 
\begin{equation}\label{f:growth_in_P-}
	|AP| |PA| \ge 2^{-2} |A| |P| q \,.
\end{equation}
	In particular, 
\begin{equation}\label{f:growth_in_P}
	\max\{ |AP|, |PA| \} \ge 2^{-1} \min\{ |A|^2 |A\cap P|^{-1}, (|A| |P|q)^{1/2} \} \,.
\end{equation} 
	Similarly, 
\begin{equation}\label{f:growth_in_P+}
	|APA| \ge |P|/4 \cdot  \min \{ q, |A|^4 \sigma^{-1}_{P} (A^{-1}, A) \sigma^{-1}_{P} (A, A^{-1})  \} \,,
\end{equation} 
	and if $A\subsetneq P$, then 
\begin{equation}\label{f:growth_in_P+'}
	|PAB| \ge  q |P| \,.
\end{equation} 
\label{t:growth_in_P}
\end{theorem}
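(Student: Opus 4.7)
The plan is to derive all four displayed inequalities from a single structural lemma: Lemma~\ref{l:P^g_cap_P}, combined with a coset decomposition of $A$ for the first three claims and a direct application for the last. The key observation that I would need to extract first is that for any $g_i \notin P$ and any $h_j \in \Gr(q)$, one has $|g_iP \cap Ph_j| \le 2|P|/q$. This follows by Lemma~\ref{f:r_xy}: if the intersection is nonempty, it equals $(P^{g_i} \cap P)c$ for some $c$, and since $N(P) = P$ and $g_i \notin P$, Lemma~\ref{l:P^g_cap_P} (rewritten as $|P \cap P^g| = |P|^2/|PgP| \le 2|P|/q$) finishes the estimate.

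For the dichotomy $|AP||A\cap P| \ge |A|^2/2$ \emph{or} $|AP||PA| \ge |A||P|q/4$, I would partition $A = \bigsqcup_{i=1}^{R} A \cap g_iP = \bigsqcup_{j=1}^{L} A \cap Ph_j$, where $R = |AP|/|P|$ and $L = |PA|/|P|$ are the numbers of right and left cosets of $P$ meeting $A$. Summing $|A \cap g_iP \cap Ph_j| \le |g_iP \cap Ph_j| \le 2|P|/q$ over all pairs $(i,j)$ with $g_i \notin P$ gives
\begin{equation*}
|A| - |A \cap P| \;\le\; RL \cdot \frac{2|P|}{q} \;=\; \frac{2\,|AP|\,|PA|}{q\,|P|} \,.
\end{equation*}
Now branch: if $|A \cap P| \ge |A|/2$, then using the trivial inclusion $A \subseteq AP$ (so $|AP| \ge |A|$) yields $|AP|\cdot|A\cap P| \ge |A|^2/2$; otherwise the displayed inequality forces $|AP||PA| \ge |A||P|q/4$. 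Inequality~(\ref{f:growth_in_P}) is then immediate from $\max(|AP|,|PA|) \ge \sqrt{|AP||PA|}$ in the second branch and from $|AP| \ge |A|^2/(2|A\cap P|)$ in the first.

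For (\ref{f:growth_in_P+}) I would apply Cauchy--Schwarz to the representation function $r_{APA}$: since $\sum_y r_{APA}(y) = |A|^2|P|$, one has $|APA| \ge |A|^4|P|^2 / \sum_y r_{APA}^2(y)$. Expanding the denominator as a sum over sextuples $(a_1,a_2,a_3,a_4,p,p')$ with $a_1 p a_2 = a_3 p' a_4$ and rearranging to $u p v = p'$ with $u := a_3^{-1}a_1$ and $v := a_2 a_4^{-1}$, the inner count over $(p,p')$ becomes $|P \cap uPv|$. A short case analysis mirroring the proof of the main bound above shows that this quantity equals $|P|$ when both $u,v \in P$, equals $0$ when exactly one of $u,v$ lies in $P$, and is at most $2|P|/q$ when neither does. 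Consequently $\sum_y r_{APA}^2(y) \le |P|\,\sigma_P(A^{-1},A)\,\sigma_P(A,A^{-1}) + 2|P||A|^4/q$, and the elementary inequality $1/(x+y) \ge \tfrac12 \min(1/x,1/y)$ delivers (\ref{f:growth_in_P+}).

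The last inequality (\ref{f:growth_in_P+'}), which I read as $|PAP| \gg q|P|$ under the hypothesis $A \not\subseteq P$ (the printed ``$A \subsetneq P$'' and ``$PAB$'' appear to be typos, since the literal reading forces $|PAP| \le |P|$), is immediate: picking any $a \in A \setminus P$ gives $|PAP| \ge |PaP| = |P|^2/|P \cap P^a| \ge |P|q/2$ by Lemma~\ref{l:P^g_cap_P}. The genuine technical step throughout is the uniform bound $|g_iP \cap Ph_j| \le 2|P|/q$; once that is internalized, both the dichotomy and the energy estimate for $|APA|$ are short, and I expect no subtleties beyond carefully tracking the absolute constants in the Cauchy--Schwarz manipulation and in the case analysis that isolates the ``diagonal'' $u,v \in P$ term.
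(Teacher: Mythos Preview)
Your argument for the dichotomy and for \eqref{f:growth_in_P} is correct, and it is genuinely different from the paper's route. The paper introduces an auxiliary parameter $\Delta=\max_{g\notin P}|A\cap gP|$, bounds the energy $\E(A^{-1},P)\le |P|^2|A\cap P|+\Delta|P||A|$ to get $|AP|\ge\tfrac12\min\{|A|^2/|A\cap P|,\,|A||P|/\Delta\}$, and then separately shows $|PA|\ge q\Delta/2$ by applying Lemma~\ref{l:P^g_cap_P} to the energy $\E(P,A_g)$ of the extremal coset slice. Your double count of $|A|-|A\cap P|$ through the $R\times L$ grid of coset intersections $g_iP\cap Ph_j$ is more direct: it eliminates both the parameter $\Delta$ and the two Cauchy--Schwarz steps, at the cost of nothing, since both arguments rest on the same input $|P\cap P^g|\le 2|P|/q$. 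Your treatment of \eqref{f:growth_in_P+} is essentially identical to the paper's (the paper writes the second moment as $\sum_{z,z'}r_{A^{-1}A}(z)r_{AA^{-1}}(z')|zP\cap Pz'|$ and splits according to whether $z,z'\in P$).

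For \eqref{f:growth_in_P+'} you have misread the statement. You are right that ``$A\subsetneq P$'' should be $A\not\subseteq P$, but $B$ is not a typo for $P$: it is the Borel subgroup fixed throughout Section~\ref{sec:Chevalley}. Your bound $|PAP|\ge|PaP|=|P|^2/|P\cap P^a|\ge q|P|/2$ is correct as stated, but since $B\subseteq P$ one only has $PAB\subseteq PAP$, so this does not yield $|PAB|\ge q|P|$. The paper's proof writes $a=b_1w_Jw^Jb_2$ via Bruhat with $1\neq w^J\in W^J$, notes $Pw^JB\subseteq PAB$, and computes $|Pw^JB|=\sum_{v\in W_J}|Bvw^JB|=|B|\,q^{l(w^J)}\sum_{v\in W_J}q^{l(v)}\ge q|P|$, obtaining the sharp constant. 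To recover the Borel version from your approach you would need to bound $|P\cap aBa^{-1}|$ rather than $|P\cap P^a|$, which Lemma~\ref{l:P^g_cap_P} does not supply directly.
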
 
\begin{proof} 
	Let $g\notin P$ and put $A_g = A \cap gP$. 
	Also, let $\D = \max_{g\notin P} |A_g|$. 
	We have 
\[
	\E(A^{-1}, P) = \sum_{x} r^2_{A^{} P} (x) = \sum_{x \in P} r^2_{A^{} P} (x) + \sum_{x \notin P} r^2_{A^{} P} (x) 
		\le |P| \sum_{x \in P} r_{A^{} P} (x) + \D |P| |A|
		= 
\]
	\begin{equation}\label{tmp:04.10_1-}
		=
		|P|^2 |A\cap P| + \D |P| |A|
	\,. 
	\end{equation}
	In view of \eqref{f:energy_CS}, we get
	\begin{equation}\label{tmp:04.10_1-'}
		|AP| \ge 2^{-1} \min \{|A| |P| \D^{-1}, 
		|A|^2 |A\cap P|^{-1}
		\} \,.
	\end{equation}
	On the other hand, using Lemma \ref{l:P^g_cap_P}, we 
	derive 
	\begin{equation}\label{f:B_Ag-}
	\E(P,A_g) = \sum_{x} r^2_{P A_g} (x) \le \sum_{x} r_{P A_g} (x) r_{P g P} (x) \le 2|P|^2 |A_g| q^{-1} \,,
	\end{equation}
	and hence by the Cauchy--Schwarz inequality, we get 
	\begin{equation}\label{f:B_Ag}
	|PA| \ge |P A_g| \ge \frac{|P|^2 |A_g|^2}{\E(P,A_g)} \ge 2^{-1} q|A_g| = 2^{-1} q \D \,,
	\end{equation}	
	where we choose $g$ such that $|A_g| = \D$. 
	Combining \eqref{tmp:04.10_1-'} and \eqref{f:B_Ag}, we arrive to \eqref{f:growth_in_P}.

	Similarly, let us obtain \eqref{f:growth_in_P+}.
	In view of Lemma \ref{f:r_xy} and Lemma \ref{l:P^g_cap_P}, we have 
\[
	\sigma:= \sum_x r^2_{APA} (x) = \sum_{z,z'} r_{A^{-1}A} (z) r_{AA^{-1}} (z') |zP \cap P z'|
	=
\]
\[
	=
		\sum_{z,z' \in P} r_{A^{-1}A} (z) r_{AA^{-1}} (z') |zP \cap P z'| + \sum_{z,z' \notin P} r_{A^{-1}A} (z) r_{AA^{-1}} (z') |zP \cap P z'|
	\le 
\]
\begin{equation}\label{tmp:11.12_1}
	\le 
	|P| \sigma_{P} (A^{-1}, A) \sigma_{P} (A, A^{-1}) + 2|P| q^{-1} |A|^4 \,.
\end{equation}
	By the Cauchy--Schwarz inequality, we know that $\sigma |APA| \ge |A|^4 |P|^2$ and combining this with \eqref{tmp:11.12_1}, we obtain the required result.

	It remains to obtain \eqref{f:growth_in_P+'}. 
	Since $A$ does not belong to $P=P_J$,  it follows that there are $w_J \in W_J$, $1\neq w^J \in W^J$, $b_1,b_2 \in B$ such that  the product $b_1 w_J w^J b_2$ is an element from $A$. 
	It easily follows from the Bruhat decomposition. 
	Then $Pw_J w^J B \subseteq  PAB$ and in view of \eqref{f:B_inclusions}, we have $Pw_J =P$.   
	Thus we see that $PAB$ contains disjoint sets $Bvw^JB$ for any $v\in W_J$ and hence by \eqref{f:size_G}
\[
	|PAB| \ge \sum_{v \in W_J} |Bvw^JB| = |B| \sum_{v \in W_J} q^{l(v w^J)} = |B| q^{l(w^J)} \sum_{v \in W_J} q^{l(v)} \ge q |B| \sum_{v \in W_J}  q^{l(v)} = q |P_J| \,. 
\] 
	This completes the proof.
	$\hfill\Box$
\end{proof}

\begin{remark}
	It is easy to see that bound \eqref{f:growth_in_P} is tight. 
	Indeed, let $P=B$ be a Borel subgroup and  $A=B \bigsqcup Bw_r B$, where $w_r$ is a fundamental reflection.
	In particular, $l(w_r) = 1$ and $A$ is a parabolic subgroup.
	Then $AB=BA=A$ but by \eqref{f:size_G}, we have $|A| \sim  q |B| \sim \sqrt{|A||B|q}$. 
\end{remark}


Now we are ready to obtain a result on 
intersections of powers of $A$ with parabolic subgroups.  
We use quasi--random technique from \cite{Gowers_quasirandom}, \cite{SX}.

\begin{theorem}
	Let $\Gr (q)$ be a Chevalley group and $P \subset \Gr (q)$ be a parabolic subgroup. 
	Also, let $n\ge 1$ be a positive integer and  $X, Y_1,\dots, Y_n \subseteq \Gr (q)$ be nonempty sets such that $X \cap P = \emptyset$ and 
\begin{equation}\label{f:A^n_cap_B}	
	q |X| |P|^3 d^{n+2}_{\min} \cdot \prod_{j=1}^n |Y_j| \ge 4 |\Gr|^{n+4} \,.
\end{equation}
	Then $XY_1 \dots Y_n X \cap P \neq \emptyset$. 
\label{t:A^n_cap_B} 
\end{theorem}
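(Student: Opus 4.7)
The plan is to prove the theorem via non-abelian Fourier analysis on $\Gr(q)$. Writing
$$|XY_1\cdots Y_n X\cap P| \;=\; \sum_{g\in\Gr}(X*Y_1*\cdots*Y_n*X)(g)\,P(g)$$
and applying the Parseval identity \eqref{f:Parseval_representations} together with the convolution formula \eqref{f:convolution_representations} gives
$$|XY_1\cdots Y_n X\cap P| \;=\; \frac{1}{|\Gr|}\sum_\rho d_\rho \tr\bigl(\widehat X(\rho)\widehat Y_1(\rho)\cdots\widehat Y_n(\rho)\widehat X(\rho)\widehat P(\rho)^*\bigr).$$
The trivial representation contributes the main term $T=|X|^2|P|\prod_j|Y_j|/|\Gr|$, so the problem reduces to showing that the contribution $E$ of the non-trivial irreducible representations satisfies $|E|<T$ under hypothesis \eqref{f:A^n_cap_B}.

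For each non-trivial $\rho$ I would apply $|\tr(AB)|\le\|A\|_{HS}\|B\|_{HS}$ together with the submultiplicative bound $\|AB\|_{HS}\le\|A\|_o\|B\|_{HS}$ to factor the trace so that the middle $\widehat Y_j$ appear only in operator norm. Parseval applied individually to each $Y_j$ then gives the bound $\|\widehat Y_j(\rho)\|_o\le\bigl(|\Gr||Y_j|/d_\rho\bigr)^{1/2}$, and the spectral-gap estimate \eqref{f:d_min}, $d_\rho\ge d_{\min}(\Gr)\gg q^r$, allows the factor $d_{\min}^{-n/2}\prod\sqrt{|Y_j|}$ to be extracted uniformly in $\rho$. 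What remains is a sum of the form $\sum_\rho d_\rho\|\widehat X\|_{HS}\|\widehat X\widehat P^*\|_{HS}$, which by Cauchy--Schwarz in $\rho$ is at most $\bigl(\sum_\rho d_\rho\|\widehat X\|_{HS}^2\bigr)^{1/2}\bigl(\sum_\rho d_\rho\|\widehat X\widehat P^*\|_{HS}^2\bigr)^{1/2}$. The first factor equals $(|\Gr||X|)^{1/2}$ by Parseval, and a direct computation of $\sum_g r_{XP}(g)^2$ identifies the second as $(|\Gr||P|\,\sigma_P(X^{-1},X))^{1/2}$.

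This is where the assumption $X\cap P=\emptyset$ enters decisively: mimicking the energy argument in the proof of Theorem~\ref{t:growth_in_P}, which uses Lemma~\ref{l:P^g_cap_P} to bound $|X\cap gP|$ for every $g\notin P$, yields $\sigma_P(X^{-1},X)\le 2|X|\,|PX|/q\le 2|X||\Gr|/q$, producing the critical factor $q^{-1}$ in $E$. Rearranging $T>E$ then leaves a clean algebraic inequality in the sizes $|X|,|P|,|Y_j|,d_{\min},|\Gr|,q$ which, up to absolute constants, coincides with the stated condition \eqref{f:A^n_cap_B}. The principal obstacle I anticipate is bookkeeping to land on the precise exponents $|P|^3$ and $d_{\min}^{n+2}$: a single naive splitting of the trace tends to give $|P|$ and $d_{\min}^{n}$, so one must treat both $X$-factors symmetrically (using $\sigma_P(X^{-1},X)$ together with $\sigma_P(X,X^{-1})$ and hence both $|PX|$ and $|XP|$), or else sharpen the bound on $\|\widehat P(\rho)\|_{HS}$ via Frobenius reciprocity and Lemma~\ref{l:Frobenius_Chevalley}, in order to recover the exponents exactly as claimed.
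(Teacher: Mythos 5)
Your overall plan — expand $|XY_1\cdots Y_n X\cap P|$ by non-abelian Fourier analysis, isolate the main term, and control the error with operator-norm bounds on the middle factors plus the spectral gap \eqref{f:d_min} — is the right one, and your identification $\sum_\rho d_\rho\|\widehat X(\rho)\widehat P(\rho)^*\|^2 = |\Gr|\sum_g r_{XP}(g)^2 = |\Gr|\,|P|\,\sigma_P(X^{-1},X)$ is correct. The appeal to Lemma~\ref{l:P^g_cap_P} via $\sigma_P(X^{-1},X)\le |X|\max_{g\notin P}|X\cap gP|\le 2|X||PX|/q$ is also legitimate and is indeed where $X\cap P=\emptyset$ and the factor $q$ enter.

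However, carrying your bookkeeping through does \emph{not} recover the stated hypothesis. With one Cauchy--Schwarz in $\rho$ and the bound $\sigma_P(X^{-1},X)\le 2|X||\Gr|/q$ one lands on the sufficient condition $q\,|X|^2|P|\,d_{\min}^{\,n}\prod_j|Y_j|\gg |\Gr|^{n+3}$; even the symmetrized version (using both $\sigma_P(X^{-1},X)$ and $\sigma_P(X,X^{-1})$ and the idempotence $\widehat P^2=|P|\widehat P$) gives $q^2|X|^2|P|^2 d_{\min}^{\,n}\prod_j|Y_j|\gg|\Gr|^{n+4}$. Neither of these is implied by \eqref{f:A^n_cap_B}: the ratio to the paper's left-hand side is $q|X|/(|P|\,d_{\min}^2)$, which for $\Gr=\SL_2(\F_q)$, $|P|\sim q^2$, $d_{\min}\sim q/2$, $|X|\sim q^2$ is of order $1/q$. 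So your route proves a genuinely different (and, in the relevant range $|X|\ll|\Gr|$, strictly weaker) statement, not Theorem~\ref{t:A^n_cap_B} as claimed. You sensed this ("a single naive splitting...tends to give $|P|$ and $d_{\min}^n$"), but the two fixes you float (symmetrizing the $\sigma$'s, or sharpening $\|\widehat P\|$ via Frobenius reciprocity and Lemma~\ref{l:Frobenius_Chevalley}) do not by themselves bridge the gap.

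The missing idea is a pre-processing step on the \emph{sets}, not on the Fourier side. Because $P$ is a subgroup, $XY_1\cdots Y_nX\cap P=\emptyset$ is equivalent to $(PX)\,Y_1\cdots Y_n\,(XP)\cap P=\emptyset$, so one may replace the two outer factors $X$ by the honest sets $PX$ and $XP$ \emph{before} expanding in Fourier. Now all $n+2$ outer factors $PX, Y_1,\dots,Y_n, XP$ are passed to operator norm via \eqref{f:Fourier_est} (this is the source of $d_{\min}^{\,n+2}$), the single remaining $\widehat P$ is absorbed by the Wiener-norm bound $\|P\|_W\le 1$ of Lemma~\ref{l:G_Wiener}, and — crucially — $|PX|$ and $|XP|$ now sit to the first power in the \emph{main} term $|PX|\,|XP|\,|P|\prod_j|Y_j|/|\Gr|$ against only $(|PX||XP|)^{1/2}$ in the error, so the lower bound $|PX||XP|\ge 2^{-2}q|X||P|$ from \eqref{f:growth_in_P-} (which is where Lemma~\ref{l:P^g_cap_P} actually does its work) is a net gain rather than a constraint. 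After cancellation this yields exactly $q|X||P|^3 d_{\min}^{\,n+2}\prod_j|Y_j|<4|\Gr|^{n+4}$, contradicting \eqref{f:A^n_cap_B}. In your formulation $|PX|$ appears only in an \emph{upper} bound on the error term, so the growth inequality can never be turned to your advantage; that is the structural reason the exponents come out short.
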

\begin{proof}
	First of all, let us obtain a general upper bound for  $\| A (\rho) \|_{o}$, where $A$ is any subset of $\Gr=\Gr(q)$ and 
	$\rho$ is an arbitrary non--trivial representation  of $\Gr$. 
	Using formula \eqref{f:Parseval_representations} with $f=A$, we have 
	\begin{equation}\label{f:Fourier_est}
	\| \FF{A} (\rho) \|_{o} < \left(\frac{|A| |\Gr|}{d_{\min}} \right)^{1/2}  \,.
	\end{equation}
	Now if $XY_1 \dots Y_n X \cap P = \emptyset$, then $(PX)Y_1 \dots Y_n (XP) \cap P = \emptyset$. 
	In terms of the representation theory it can be rewritten as 
\[
	0 = \frac{|PX| |Y_1| \dots |Y_n| |XP| |P|}{|\Gr|} + \frac{1}{|\Gr|} \sum_{\rho \in \FF{\Gr},\, \rho \neq 1} 
		\langle \FF{PX} (\rho) \FF{Y}_1 (\rho) \dots \FF{Y}_n (\rho) \FF{XP} (\rho), \FF{P} (\rho) \rangle 
\]
	Since $X \cap P = \emptyset$, we know by estimate \eqref{f:growth_in_P-} of Theorem \ref{t:growth_in_P} that $|PX| |XP| \ge 2^{-2} |X| |P| q$. 
	Using this fact and applying Lemma \ref{l:G_Wiener}, combining with bound \eqref{f:Fourier_est} for the sets $Y_j$, we obtain 
\[
	\frac{|PX| |Y_1| \dots |Y_n| |XP||P|}{|\Gr|} 
		<
			\| P\|_W \left(\frac{|PX| |\Gr|}{d_{\min}} \right)^{1/2} \left(\frac{|XP| |\Gr|}{d_{\min}} \right)^{1/2} 
			\prod_{j=1}^n \left(\frac{|Y_j| |\Gr|}{d_{\min}} \right)^{1/2} 
			\le 
\]
\[
			\le 
			\left(\frac{|\Gr|}{d_{\min}}\right)^{(n+2)/2} \left( |PX| |XP| \prod_{j=1}^n |Y_j| \right)^{1/2}
\]
	or, in other words, 
\[
	q |X| |P|^3 d^{n+2}_{\min} \cdot \prod_{j=1}^n |Y_j| < 4 |\Gr|^{n+4} \,.
\]
This completes the proof.
$\hfill\Box$
\end{proof}

\bigskip

Let $P$ be a parabolic subgroup of size at least $|\Gr(q)|/d_{\min}$ and let $A\cap P = \emptyset$. 
Then Theorem \ref{t:A^n_cap_B} says us that $A^{n+2} \cap P \neq \emptyset$, provided 
\begin{equation}\label{f:lim_d_min} 
	|A| \gg \frac{|\Gr(q)|}{d_{\min}} \cdot \left(\frac{d^2_{\min}}{q} \right)^{1/(n+1)} \,.
\end{equation}
In other words, if we want to generate $\Gr(q)$ by powers of $A$, then  we have a 
natural barrier  
$|A| \gg \left(\frac{|\Gr(q)|}{d_{\min}} \right)^{1+\eps}$.
Our next aim is to 
relax 
the last 
condition. 

To do this in a particular case of $\SL_2 (\F_q)$  we need a result 
on growth in 
$\SL_2 (\F_q)$, which provides us some concrete bounds for growth, see \cite[Theorem 14]{MS_Zaremba} (which in turn develops the ideas of \cite{H}, \cite{RS_SL2}).
In the general case we apply Lemma \ref{l:P_2}.

\begin{theorem}
	Let $q\ge 5$, $A\subseteq \SL_2 (\F_q)$ be a generating set, $q^{2-\epsilon} \ll |A| \le q^{\frac{72}{35}}$, $\epsilon<\frac{2}{25}$. 
	Then $|AAA| \gg |A|^{\frac{25}{24}}$. 
\label{t:K_lower_SL}
\end{theorem}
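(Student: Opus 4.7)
The plan is to follow the Helfgott--Bourgain--Gamburd scheme for growth in $\SL_2 (\F_q)$, quantitatively sharpened in the spirit of \cite{H}, \cite{RS_SL2} and \cite{MS_Zaremba}. The argument proceeds via a dichotomy depending on whether $A$ concentrates on a conjugate of a Borel subgroup (equivalently, on a maximal torus or on a unipotent coset). Fix a threshold $\alpha \in (0,1)$; the final exponent $25/24$ will come out of balancing the two cases via a concrete choice of $\alpha$.

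\textbf{Case A (Borel concentration).} Suppose there exist $g,h \in \SL_2 (\F_q)$ with $|A \cap gBh| \ge |A|^\alpha$, where $B$ is the standard Borel. After conjugation we may assume that the bulk of $A$ sits in $B$ itself, so we are essentially working inside $\Aff (\F_q) = \F_q \rtimes \F_q^*$. Growth in the affine group reduces directly to the sum--product phenomenon in $\F_q$ applied to the projections of $A$ onto the torus and the unipotent radical. Feeding this into a sharp sum--product bound of Rudnev--Shakan--Shkredov--Stevens type then yields $|AAA| \gg |A|^{25/24}$.

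\textbf{Case B (no Borel concentration).} In this regime the trace map $\tau : \SL_2 (\F_q) \to \F_q$ spreads $A$ out, with fibre bounds $|\tau^{-1}(\lambda) \cap A| \le |A|^{\alpha}$. One then counts sextuples $(a,b,c,a',b',c') \in A^6$ with $\tau(abc) = \tau(a'b'c')$, using that this equation cuts out a bounded--degree variety in the coordinates of $c,c'$. Applying Rudnev's point--plane incidence bound in $\F_q^3$ together with the non--concentration input controls this count and in turn gives $|\tau (AAA)| \gg |A|^{25/24}$, hence the same lower bound for $|AAA|$ via the fibre bound. The hypothesis that $A$ generates $\SL_2 (\F_q)$ is used here to rule out the degenerate situation where $A$ lies inside a normaliser of a torus (which would kill the incidence count); this is the usual ``escape from subvarieties'' step.

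The principal obstacle is the precise calibration of $\alpha$: one must pick it so that Case A and Case B deliver exactly the same exponent $25/24$. The admissible range $q^{2-\varepsilon} \ll |A| \le q^{72/35}$ with $\varepsilon < 2/25$ is dictated by this balancing. The upper bound $q^{72/35}$ is the threshold above which the point--plane incidence bound becomes trivial (the set $A$ is dense enough that no incidence input does better than the Plünnecke estimate), while the lower bound $q^{2-\varepsilon}$ together with $\varepsilon < 2/25$ ensures that the sum--product input of Case A applies to sets that are not yet saturating $\F_q$ and that the contribution of ``bad'' conjugation classes to the energy is absorbed into the exponent $25/24$ rather than overwhelming it.
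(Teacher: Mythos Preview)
The paper does not contain a proof of this theorem at all: it is quoted verbatim as \cite[Theorem 14]{MS_Zaremba}, with the remark that it ``develops the ideas of \cite{H}, \cite{RS_SL2}''. So there is no in--paper argument to compare your attempt against; the present paper simply imports the result and uses it as a black box in the proof of Theorem~\ref{t:A^n_cap_B_SL2}.

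As for your outline on its own merits: the two--case strategy (Borel concentration $\Rightarrow$ sum--product inside the affine group; no concentration $\Rightarrow$ trace/incidence count via Rudnev's point--plane theorem) is indeed the scheme of \cite{RS_SL2} and \cite{MS_Zaremba}, so you have identified the correct mechanism. Two caveats. First, in Case B you slide from ``no concentration on a Borel coset $gBh$'' to ``small trace fibres $|\tau^{-1}(\lambda)\cap A|\le |A|^{\alpha}$''. These are not the same thing: a trace fibre is a conjugacy class of size $\sim q^2$, not a Borel coset; the incidence argument in \cite{RS_SL2} actually needs non--concentration on maximal tori (equivalently, on lines in the Rudnev setup), and one has to pass between the two hypotheses carefully. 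Second, the proposal never actually chooses $\alpha$ or performs the balancing that is supposed to produce the exponent $25/24$ and the window $q^{2-\eps}\ll |A|\le q^{72/35}$, $\eps<2/25$; this is precisely the content of \cite[Theorem 14]{MS_Zaremba}, and without doing it your write--up is a plan rather than a proof.
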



Now we are ready to prove a result, which breaks the limit from \eqref{f:lim_d_min}.
The absolute constants in $2)$, $3)$ can be easily computed but we do not specify them.

\begin{theorem}
	Let $B$ be a Borel subgroup of $\SL_2 (\F_q)$ and $A\subseteq \SL_2 (\F_q)$ be an arbitrary set.  
	Then the following holds  \\
	$1)~$ If  $|A| \ge q^{2-c}$, $c<\frac{2}{25}$, then there is 
	$n\le \lceil \frac{24(1+c)}{2-25c} \rceil$ 
	such that $A^{3n+2} \cap B \neq \emptyset$.\\
	$2)~$ If $|A| \ge q^{1+\d}$, then there is 
	$n\ll 1/\d$ 
	with 
	$A^{n} \cap B \neq \emptyset$.\\
	$3)~$ In general, let $q$ an odd number, $\Gr(q)$ be a 
	Chevalley group and $P \subset \Gr (q)$ be a parabolic subgroup.  
	Suppose that $|A| \ge  \Pi_1 (\Gr(q)) q^{-1+\d}$. 
	Then there is $n$,  $n \ll_l \d^{-1}$ such that $A^{n} \cap P \neq \emptyset$. 
\label{t:A^n_cap_B_SL2}
\end{theorem}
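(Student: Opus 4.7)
The common strategy is to apply Theorem~\ref{t:A^n_cap_B} with $X$ and $Y_1,\dots,Y_n$ chosen as suitable powers of $A$; a preliminary growth step ensures the hypothesis~\eqref{f:A^n_cap_B} is satisfied. For Part~$1)$, apply Theorem~\ref{t:A^n_cap_B} in $\SL_2 (\F_q)$ with $X=A$ and $Y_1=\cdots=Y_n=A^3$, so that $XY_1\cdots Y_n X=A^{3n+2}$. Using $|\SL_2 (\F_q)|\asymp q^3$, $|B|\asymp q^2$ and $d_{\min}\ge (q-1)/2$, the hypothesis reduces up to absolute constants to $|A|\,|A^3|^n\gtrsim q^{2n+3}$. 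Substituting $|A|\ge q^{2-c}$ and the explicit growth bound $|A^3|\gg|A|^{25/24}\ge q^{(2-c)\cdot 25/24}$ from Theorem~\ref{t:K_lower_SL}, and rearranging, yields $n\cdot (2-25c)/24\ge 1+c$, so $n=\lceil 24(1+c)/(2-25c)\rceil$ suffices, and the assumption $c<2/25$ is exactly what makes the denominator positive.

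For Parts~$2)$ and $3)$ the set $A$ is too small for a direct application, so we first grow $A$ via Helfgott's theorem (the explicit version in $\SL_2$ for Part~$2)$, Theorem~\ref{t:growth_in_Lie} for Part~$3)$). Iterating $A\mapsto A^3$ gives $|A^{3^k}|\ge|A|^{(1+c_\star)^k}$ for a Helfgott constant $c_\star>0$ (depending on rank in Part~$3)$), until $A^{3^k}=\Gr(q)$, in which case we are done immediately. For Part~$2)$ choose $k=O(1)$ so that $|A^{3^k}|\ge q^{2-c_0}$ for some fixed $c_0<2/25$ and apply Part~$1)$ to $A':=A^{3^k}$; the total exponent is $O(1)\ll 1/\d$ in the interesting regime $\d\le 1$. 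For Part~$3)$ apply Theorem~\ref{t:A^n_cap_B} with $X=Y_1=\cdots=Y_n=A^{3^k}$ and $|P|\le\Pi_1(\Gr(q))$: combining \eqref{f:d_min} with the (case by case verifiable) asymptotic $\Pi_1(\Gr(q))\asymp|\Gr(q)|/d_{\min}$ reduces the hypothesis to $|A^{3^k}|\ge\Pi_1\cdot q^{O_r(1/n)}$. Together with $|A|\ge\Pi_1 q^{-1+\d}$ and the growth bound, choosing $(1+c_r)^k$ just above the $\d$-independent critical value $\log\Pi_1/(\log\Pi_1-\log q)$ yields $k=O_r(1)$ and $n\ll_r 1/\d$, so the total exponent $3^k(n+2)$ is $O_r(1/\d)$.

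In the non-generating case, Lemma~\ref{l:P_2} applied to $|A|>P(\Gr(q))$ — which follows from $|A|\ge\Pi_1 q^{-1+\d}$ and $\Pi_1\ge q\,P(\Gr(q))$ — forces $\langle A\rangle$ to be a proper parabolic; Lemma~\ref{f:r_xy} then gives $\langle A\rangle\cap P\neq\emptyset$, and one concludes by a Helfgott-style growth argument inside the (semisimple) Levi quotient of $\langle A\rangle$, which in Part~$2)$ is simply Corollary~\ref{c:Aff_U} applied inside $\Aff (\F_q)$. The main obstacle is precisely this last step in Part~$3)$: Helfgott's theorem is stated for simple groups, so its direct application inside the parabolic $\langle A\rangle$ is not legal, and one must either pass to the Levi quotient and lift or give a purely combinatorial argument via Theorem~\ref{t:growth_in_P}. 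A secondary difficulty is the quantitative trade-off between $k$ and $n$ in Part~$3)$: one must keep $k$ constant while letting $n$ absorb the factor $1/\d$, which requires $(1+c_r)^k$ to cross the aforementioned threshold exactly, lest the final bound degrade to a higher power of $1/\d$.
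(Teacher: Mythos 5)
Your treatment of the generating case is correct: for Part $1)$ the substitution $X=A$, $Y_j=AAA$ into Theorem~\ref{t:A^n_cap_B}, combined with $|AAA|\gg|A|^{25/24}$ from Theorem~\ref{t:K_lower_SL}, is exactly what the paper does, and your arithmetic reproducing $n\ge 24(1+c)/(2-25c)$ is right. For Part $3)$ in the generating case the paper is actually simpler than you propose: it just iterates Theorem~\ref{t:growth_in_Lie} until $A^m=\Gr(q)$ with $m\ll_r 1$, rather than growing and then re-entering Theorem~\ref{t:A^n_cap_B} with your (unverified, and for the argument unnecessary) asymptotic $\Pi_1(\Gr(q))\asymp|\Gr(q)|/d_{\min}$.

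The genuine gap is the non-generating case. For Part $1)$ you do not address it at all, though it is non-vacuous: since $q^{2-c}<q^2\sim|B|$, the set $A$ may well sit inside a Borel subgroup $B_*$ without generating $\SL_2$. For Part $3)$ you correctly flag the issue but your proposed fix (pass to the Levi quotient and run Helfgott there, then lift) is not the paper's route and, as you concede, does not obviously close. The paper avoids growth inside $\G$ entirely. Its key observation is structural: the intersection of the parabolic target $P$ with the parabolic $\G\supseteq A$ contains a maximal torus $T$ (because any two Borel subgroups share a conjugate of $H$), and a quasirandomness/density argument inside $\G$ targeting $T$ suffices. The reason this works is Lemma~\ref{l:Frobenius_Chevalley}: any non-trivial irreducible representation $\rho$ of $\G$ with $\FF{H}(\rho)\neq 0$ has $d_\rho\ge(q-1)/2$, so the same Wiener-norm computation as in Corollary~\ref{c:Aff_U} gives $A^n\cap T\neq\emptyset$ once $|A|\gg|\G|q^{-1}(|\G|/|T|)^{2/n}$, which the hypothesis $|A|\ge\Pi_1 q^{-1+\d}$ turns into $n\gg_l\d^{-1}$. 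This representation-theoretic substitute for growth inside $\G$ is the idea you are missing; without it Part $3)$ (and the analogous tail of Part $1)$) is not proved.
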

\begin{proof}
	We can assume that $A\cap B = \emptyset$ because otherwise there is nothing to prove. 
	Let 
	$U := \{(1u|01) : u\in \F_q\}$.
	If $A$ generates $\SL_2 (\F_q)$, then by Theorem \ref{t:K_lower_SL} either $|A| \ge q^{\frac{72}{35}}$ or 
	$|AAA|\gg |A|^{\frac{25}{24}} > q^{2+\frac{2-25c}{24}}$.
	Applying Theorem \ref{t:A^n_cap_B} with $P=B$, $X=A$, $Y_j =AAA$ and $d_{\min} = \frac{q-1}{2}$ we see that 
	$A^{3n+2} \cap B \neq \emptyset$ provided $n\ge \lceil \frac{24(1+c)}{2-25c} \rceil$. 
	If $|A| \ge q^{\frac{72}{35}}$, then Theorem \ref{t:A^n_cap_B} with $P=B$, $X=A$, $Y_j =A$ gives us even better upper bound for $n$.

	Now suppose that  $A$ does not generate $\SL_2 (\F_q)$. 
	By the well--known subgroups structure of $\SL_2 (\F_q)$ see, e.g., \cite{Dickson} we have  that $A$ is a subset of  a Borel subgroup and conjugating we can assume that $A$ is a subset of the standard Borel subgroup $B_*$ of the upper--triangular matrices.  
	Also, we have $B= g^{-1} B_* g$ for a certain $g\in  \SL_2 (\F_q)$.
	We can assume that $g \notin B_*$ because otherwise $B_* = B$ and hence $A = B_* \cap A = B \cap A \neq \emptyset$.
	One can carefully use inequalities \eqref{f:growth_in_P+}, \eqref{f:growth_in_P+'} of Theorem \ref{t:growth_in_P} and prove that  $A^{-1} B A^{-1}$ has size at least
	$|\SL_2 (\F_q)| - (1+o(1)) |B|$. 
	It is not enough for our purposes and we consider $A^n$ directly. 
	By the Bruhat decomposition 
	the element $g$ 
	can be written as $b w u$, where  $b \in B$, $u \in U$ and $w=(01|(-1)0)$.   
	Then any element of $B= g^{-1} B_* g$ 
	has the form 
\begin{equation}\label{f:ABA}
\left( {\begin{array}{cc}
	1 & -v \\
	0 & 1 \\
	\end{array} } \right) 
\left( {\begin{array}{cc}
	\la & 0 \\
	u & \la^{-1} \\
	\end{array} } \right)
\left( {\begin{array}{cc}
	1 & v \\
	0 & 1 \\
	\end{array} } \right)
=
\left( {\begin{array}{cc}
	\la-v u & v(\la-u v) -  v \la^{-1} \\
	u & u v + \la^{-1} \\
	\end{array} } \right) \,,
\end{equation}
	where the variables $\la$, $u$ run over 
	$\F^*_q$, $\F_q$, correspondingly, 
	and $v$  is a fixed element.
	Since $A^n \subseteq B_*$, it follows  that it is enough to find an element $(\la\, (v\la -  v \la^{-1}) | 0 \la^{-1} ) \in B_* \cap B$ in $A^n$.
	The intersection  $T:=B_* \cap B$ is, clearly, is subgroup of size $q-1$ and $T$ is, actually, a torus. 
	Applying 
	Corollary \ref{c:Aff_U} (here we use the representation theory for $B$ not $\Aff(\F_q)$), we obtain that $A^3 \cap T \neq \emptyset$, provided $|A| \gg q^{5/3}$.

	Now let us prove that the condition $|A| \ge q^{1+\d}$ implies that there is $n \ll 1/\d$ such that $A^{n} \cap B \neq \emptyset$.
	Again, if $A$ generates $\SL_2 (\F_q)$, then we consequently apply Theorem \ref{t:K_lower_SL} (also, see \cite[Lemma 4]{RS_SL2}) 
	and derive that $A^{3^{n+1}} = \SL_2 (\F_q)$ provided $(1+c_*)^n (1+\d) > 8/3$, where $c_* >0$ is an absolute constant.
	Hence $n = O(1)$ and in particular, $A^{3^{n+1}} \cap B \neq \emptyset$. 
	Now if $A$ does not generate $\SL_2 (\F_q)$, then by the subgroups structure of $\SL_2 (\F_q)$ we see that $A$ is a subset of a Borel subgroup $B_1$. 
	Put $T = B \cap B_1$. 
	Then, as above,  $T$ is a torus, 
	having size 
	$q-1$.
	Applying Corollary \ref{c:Aff_U} again, we obtain $A^n \cap T \neq \emptyset$ provided $|A| > q^{1+2/n}$.
	Thus the restriction $n> 2/\d$ is enough in this case.

	It remains to prove the third part of our theorem. 
	Again we can assume that $A\subseteq \G \le \Gr(q)$, $\G \neq \Gr(q)$ because otherwise we consequently apply Theorem \ref{t:growth_in_Lie} to generate the whole $\Gr(q)$. 
	By our assumption and Lemma \ref{l:P_2}, we have 
$$
	|\G| \ge |A| \ge  \Pi_1 (\Gr(q)) q^{-1+\d} > \Pi_1 (\Gr(q)) q^{-1} \ge P (\Gr(q)) 
$$
	 and hence $\G$ is a parabolic subgroup, $|\G| \le \Pi_1 (\Gr (q))$. 

	Recall that the intersection of two Borel subgroups contains a maximal torus of $\Gr(q)$. 
	Indeed, by the Bruhat decomposition  we have $B:= gB_*g^{-1} = u w B_* w^{-1} u^{-1}$, where $u\in U$, $w\in W$ and hence $u H u^{-1} \subseteq B_* \cap B$ because $w^{-1} H w = H \subseteq B_*$.   
	In particular, the subgroup $P \cap \G$ contains a torus $T$. 
	Applying the arguments from the proof of Corollary \ref{c:Aff_U} for the group $\G$, as well as Lemma \ref{l:Frobenius_Chevalley}, we see that $A^n \cap T \neq \emptyset$, if 
\[
	|A| \gg \frac{|\Pi_1 (\Gr (q))|}{q} \cdot \left(\frac{|\Pi_1 (\Gr (q))|}{|T|} \right)^{2/n} \ge \frac{|\G|}{q} \cdot \left(\frac{|\G|}{|T|} \right)^{2/n}\,.
\]
	By the assumption  $|A| \ge  \Pi_1 (\Gr(q)) q^{-1+\d}$ and hence it is enough to have $n\gg_{l} \d^{-1}$.
This completes the proof.
$\hfill\Box$
\end{proof}

\bigskip 

{\bf Example.} Let $B^{+}, B^{-}$ be the standard Borel subgroups of the upper/lower--triangular matrices from $\SL_2 (\F_p)$ and $p\equiv -1 \pmod 4$. 
Let also $A\subseteq B^{+} \setminus B^{-}$ such that all elements of matrices from  $A$ are quadratic residues. 
Then one can see that $A\cap B^{-}$ and $A^2\cap B^{-}$ are empty. 
Also, we have $|A| \gg p^2$. 
It means that in Theorem \ref{t:A^n_cap_B_SL2} we need at least three multiplications even for sets $A$ with $|A| \gg p^2$.

\section{Two applications to Zaremba's conjecture}
\label{sec:application}

Using 
inequality \eqref{f:growth_in_P-} of Theorem \ref{t:growth_in_P}, combining with Theorem \ref{t:A^n_cap_B}, and applying the method from  \cite{MS_Zaremba} one can 
decrease 
the constant $30$ in Theorem \ref{t:main_MS} to $24$. 
We go further, using the specific of our problem and obtain Theorem \ref{t:main_MS_new} from the Introduction.

Denote by $F_M(Q)$  the set of all {\it rational} numbers  $\frac{u}{v}, (u,v) = 1$ from $[0,1]$ with all partial quotients in (\ref{exe}) not exceeding $M$ and with $ v\le Q$:
\[
F_M(Q)=\left\{
\frac uv=[0;b_1,\ldots,b_s]\colon (u,v)=1, 0\leq u\leq v\leq Q,\, b_1,\ldots,b_s\leq M
\right\} \,.
\]
By $F_M$ denote the set of all {\it irrational} 
numbers  from $[0,1]$ with partial quotients less than or equal to $M$.
From \cite{hensley1992continued} we know that the Hausdorff dimension $w_M := \mathrm{HD} (F_M)$ of the set  $F_M$ satisfies
\begin{equation}
w_M = 1- \frac{6}{\pi^2}\frac{1}{M} -
\frac{72}{\pi^4}\frac{\log M}{M^2} + O\left(\frac{1}{M^2}\right),
\,\,\, M \to \infty \,,
\label{HHD}
\end{equation}
however here 
it is enough for us to have 
a simpler result from \cite{hensley1989distribution}, which states that
\begin{equation}\label{oop}
1-w_{M} \asymp  \frac{1}{M}
\end{equation}
with some absolute constants in the sign $\asymp$.
Explicit estimates for dimensions of $F_M$  for certain values of $M$ can be found in \cite{jenkinson2004density}, \cite{JP} and in other papers.
For example, see \cite{JP} 
\begin{equation}\label{f:w_2}
	w_2 = 0.5312805062772051416244686...  > \frac{1}{2}
\end{equation}
In  papers \cite{hensley1989distribution,hensley1990distribution} Hensley gives the bound
\begin{equation}
|F_M(Q)| \asymp_M Q^{2w_M} \,.
\label{QLOW}
\end{equation}
More generally (see \cite{hensley1996}), let $\mathcal{A} \subset \mathbb{N}$ be a finite set with at least two points and let $F_{\mathcal{A}}$ be the set of all irrational numbers such that  $b_j\in \mathcal{A}$ (previously,  $\mathcal{A} = \{1,\dots, M\}$). 
Then  it is known \cite{hensley1989distribution}, \cite{hensley1996} that for the correspondent discrete set $F_{\mathcal{A}} (Q)$ formula  \eqref{QLOW} takes place (the constants there depend on $\mathcal{A}$ of course). 
The Hausdorff dimension $\mathrm{HD} (F_{\mathcal{A}})$ of the set $F_{\mathcal{A}}$ it is known to exist and satisfies $0<\mathrm{HD} (F_{\mathcal{A}}) < 1$.

We associate a set of matrices from $\Gr = \SL_2 (\F_p)$ with the continued fractions. 
One has 
\begin{equation}\label{f:continuants_aj}
\left( {\begin{array}{cc}
	0 & 1 \\
	1 & b_1 \\
	\end{array} } \right) 
\dots 
\left( {\begin{array}{cc}
	0 & 1 \\
	1 & b_s \\
	\end{array} } \right) 
=
\left( {\begin{array}{cc}
	p_{s-1} & p_s \\
	q_{s-1} & q_s \\
	\end{array} } \right) \,,
\end{equation}
where $p_s/q_s =[0;b_1,\dots, b_s]$ and $p_{s-1}/q_{s-1} =[0;b_1,\dots, b_{s-1}]$.
Clearly, $p_{s-1} q_s - p_s q_{s-1} = (-1)^{s}$.
Let $Q=p-1$ and consider the set $F_M(Q)$. 
Any $u/v \in F_M(Q)$ corresponds to a matrix from \eqref{f:continuants_aj} such that  $b_j \le M$.
The set $F_M(Q)$ splits into ratios with even $s$ and with odd $s$, 
in other words 
$F_M(Q) =  F^{even}_M(Q) \bigsqcup F^{odd}_M(Q)$. 
Let $A \subseteq \SL_2 (\F_p)$ be the set of matrices of the form above with even $s$.
It is easy to see from \eqref{QLOW}, multiplying if it is needed the set $F^{odd}_M (Q)$ by $(01|1b)^{-1}$, $1\le b \le M$ that 
$|F^{even}_M(Q)| \gg_M |F_M (Q)|  \gg_M Q^{2w_M}$.
Let $B$ be the standard Borel subgroup of $\SL_2 (\F_p)$, i.e., the set of all upper--triangular matrices.  
It is easy to check that  if for a certain $n$ one has $A^n \cap B \neq \emptyset$, then $q_{s-1}$ 
equals zero modulo $p$ and hence there is $u/v \in F_M ((2p)^n)$ such that $v\equiv 0 \pmod p$.
Actually, if we find any number from $p_s,q_s,p_{s-1},q_{s-1}$ equals zero modulo $p$, then we can do the same,  see \cite{hensley_SL2} (but we do not need  this fact).

\begin{lemma}
	We have 
\begin{equation}\label{f:sigma_A}
	\sigma_{B} (A, A^{-1}) \le p|A| 
	\quad \quad \mbox{ and } \quad \quad 
	\sigma_{B} (A^{-1}, A) \le M^2 p|A| \,.
\end{equation}
	Moreover,  
\begin{equation}\label{f:sigma_A_D}
	\max_{g\in \SL_2 (\F_p)} \{|A\cap gB|, |A\cap Bg| \} \le M p \,,
\end{equation}
\begin{equation}\label{f:sigma_A_D+}
\max_{g,h\in \SL_2 (\F_p)} |A\cap gBh| 	\ll_M  |A| \cdot p^{-\frac{2w_M-1}{4}}  \,.
\end{equation}
\label{l:sigma_A}
\end{lemma}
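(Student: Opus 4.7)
The plan is to exploit the explicit parametrization of the matrices of $A$ via their bottom row $(q_{s-1}, q_s)$ and left column $(p_{s-1}, q_{s-1})$, combined with the determinant identity $p_{s-1} q_s - p_s q_{s-1} = \pm 1$ and Hensley's density estimate \eqref{QLOW}.

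For the bounds in \eqref{f:sigma_A} and \eqref{f:sigma_A_D}, the starting point is the observation that the bottom row $(q_{s-1}, q_s)$ uniquely determines $a \in A$: indeed $p_s$ is the unique inverse of $-q_{s-1}$ modulo $q_s$ in $[0, q_s)$ and then $p_{s-1} = (p_s q_{s-1} + 1)/q_s$, while the left column $(p_{s-1}, q_{s-1})$ determines $a$ up to the $M$ choices of the final partial quotient $b_s$. Next, $a_1 a_2^{-1} \in B$ translates to ``the bottom rows of $a_1,a_2$ are proportional in $\F_p^2$'', while $a_1^{-1} a_2 \in B$ translates to the analogous proportionality of left columns; and one-sided cosets $Ba_2$, $a_1 B$ correspond to single projective lines in $\F_p^2$. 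Since $0<q_{s-1}<q_s\le p-1$ and $0\le p_{s-1}<q_{s-1}\le p-1$, each line $[u:v] \in \mathbb{P}^1(\F_p)$ carries at most $p-1$ admissible bottom rows (resp.\ left columns), as the scalar parameter $t \in \F_p^*$ uniquely determines the integer representative. Summing over $a_2$ (resp. $a_1$) then yields $\sigma_B(A, A^{-1}) \le p|A|$ and $\sigma_B(A^{-1}, A) \le M\cdot p|A| \le M^2 p|A|$, and the per-coset bound \eqref{f:sigma_A_D} is immediate.

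For \eqref{f:sigma_A_D+} the set $gBh$ is a translate, not a coset, of $B$, so ``$a \in gBh$'' is the single linear condition mod $p$ obtained by setting the bottom-left entry of $g^{-1} a h^{-1}$ to zero, i.e.\ one linear form vanishing on the four entries $(p_{s-1},p_s,q_{s-1},q_s)$. The plan is Cauchy--Schwarz: writing $N=|A\cap gBh|$, for $a_1,a_2 \in A\cap gBh$ one has $a_1 a_2^{-1} \in gBg^{-1} =: B'$, so
\[
N^2 \le |\{(a_1,a_2)\in A^2 :\, a_1 a_2^{-1}\in B'\}|.
\]
A second Cauchy--Schwarz bounds this count in terms of the energy of ``rows of $a$ mod $p$ lying on a prescribed projective line'', which can then be controlled using Hensley's bound $|F_M(Q)| \asymp_M Q^{2w_M}$ together with a dyadic decomposition on the denominator $q_s$. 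The exponent $(2w_M-1)/4$ should arise from the chain: the random-like density of $A$ in $\SL_2(\F_p)$ is $p^{2w_M-3}$, a single linear condition mod $p$ saves a factor $p^{2w_M-1}$ off $|A|$ heuristically, and the two Cauchy--Schwarz steps divide the saving by $4$.

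The main obstacle is precisely \eqref{f:sigma_A_D+}: since $B'$ is a conjugate Borel, the clean ``bottom row determines $a$'' trick used for $B$ no longer applies, and one must import the Hausdorff-dimension estimate \eqref{QLOW} to count CF matrices whose reductions mod $p$ lie on a generic algebraic curve coming from the bilinear incidence condition. The key will be to show that the energy $|\{(a_1,a_2)\in A^2:\, a_1 a_2^{-1}\in B'\}|$ is at most $p^{1-(2w_M-1)/2}\,|A|^2$, which after taking square roots yields the claimed power saving.
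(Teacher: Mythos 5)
For \eqref{f:sigma_A} and \eqref{f:sigma_A_D} your argument is essentially the paper's: you parametrize $A$ by the bottom row $(q_{s-1},q_s)$ (which determines the matrix uniquely, since $q_s/q_{s-1}=[b_s;b_{s-1},\dots,b_1]$) or the left column (determines it up to the $M$ choices of $b_s$), and observe that a coset condition is a single linear relation mod $p$ on those two coordinates, so each coset of $B$ captures $O(p)$ elements. That matches the paper, up to cosmetic differences in how the count $M$ vs.\ $M^2$ is accounted.

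For \eqref{f:sigma_A_D+}, however, your proposal is a genuinely different route and, as sketched, it has two problems. First, the arithmetic does not close: you set $N:=|A\cap gBh|$ and propose $N^2 \le |\{(a_1,a_2)\in A^2: a_1a_2^{-1}\in B'\}|$, and then claim an energy bound of $p^{1-(2w_M-1)/2}|A|^2$; but that only yields $N \ll p^{1/2-(2w_M-1)/4}|A|$, which is a full factor $p^{1/2}$ weaker than the claimed \eqref{f:sigma_A_D+}. Second, and more importantly, the ``key'' energy estimate on $B'$ is asserted without proof, and obtaining it for an arbitrary conjugate Borel $B'$ is precisely the hard point: the double Cauchy--Schwarz plus Hensley's $|F_M(Q)|\asymp_M Q^{2w_M}$ by themselves do not give a power saving, because the constraint is a single linear condition mod $p$ and Hensley's bound only controls global cardinality, not how $A$ distributes on a sparse mod-$p$ hyperplane.

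The paper's actual argument for \eqref{f:sigma_A_D+} uses a different and essential mechanism. Writing the condition $a\in gBh$ (via Bruhat one may assume $\alpha=d=0$, $\beta=b=1$, $\gamma=c=-1$) as $\delta(q_s+\omega q_{s-1})\equiv p_s+\omega p_{s-1}\pmod p$, one observes that any four elements of $A\cap gBh$ produce tuples $(q_s,q_{s-1},p_s,p_{s-1})$ lying in a common hyperplane, so the $4\times 4$ determinant \eqref{f:det_A} vanishes mod $p$. The key step is then to restrict to $\tilde A\subset A$ built from $F_M(2^{-5}Q^{1/4})$: since all entries are $<2^{-2}p^{1/4}$, the determinant relation becomes a true equation over $\Z$; substituting into the Pl\"ucker identity $q_sp_{s-1}-p_sq_{s-1}=1$ produces the factorization \eqref{tmp:divisors}, and the divisor bound gives $|\tilde A\cap gBh|\le Mp^{1/4}$. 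Finally one lifts via $A\subseteq \tilde A X$ with $X$ built from $F_M(2^5(M+1)Q^{3/4})$, and sums the bound over $X$ to get $|A\cap gBh|\ll_M |A|\,p^{(1-2w_M)/4}$. This archimedean descent (small-entry subfamily $\tilde A$, determinant becomes integral, divisor-function count, then lift) is the idea your proposal is missing, and without it the bound does not follow.
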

\begin{proof}
	Let us begin with the estimation of $\sigma_{B} (A, A^{-1})$. 
	We see that the product 
\begin{equation} 
\left( {\begin{array}{cc}
		p_{s-1} & p_s \\
		q_{s-1} & q_s \\
\end{array} } \right) 
\left( {\begin{array}{cc}
	q'_{t} & -p'_t \\
	-q'_{t-1} & p'_{t-1} \\
	\end{array} } \right) 
\in B\,,
\end{equation}
iff $q'_t q_{s-1} \equiv q_s q'_{t-1} \pmod p$. 
It is well--known that $\frac{q_s}{q_{s-1}} = [b_s;b_{s-1}, \dots, b_1]$ and hence the number of pairs $(q_{s-1},q_s)$ is at most $|A|$. 
Further, fixing $q'_{t-1}$, as well as a pair $(q_{s-1},q_s)$, we find $q'_t$ uniquely modulo $p$ and hence we find $q'_t$ because $q'_t \le p-1$. 
Thus $\sigma_{B} (A, A^{-1})  \le p |A|$ because all variables do not exceed $p-1$. 
The argument showing that $\sigma_{B} (A^{-1}, A) \le M p|A|$ is even simpler because in this case we have the equation 
$p'_{t-1} q_{s-1} \equiv p_{s-1} q'_{t-1} \pmod p$ and any pair $(p_{s-1}, q_{s-1}, q'_{t-1})$ determine $p'_{t-1}$.
It remains to notice that we can reconstruct $(p_s,q_s)$ from $(p_{s-1}, q_{s-1})$ in  at most $M$ ways.    
Bound \eqref{f:sigma_A_D} can be obtained exactly in the same way.

Finally, to get \eqref{f:sigma_A_D+} we see that the inclusion 
\begin{equation}\label{f:inclusion_gen_Borel}
\left( {\begin{array}{cc}
	\alpha & \beta \\
	\gamma & \delta \\
	\end{array} } \right) 
\left( {\begin{array}{cc}
	p_{s-1} & p_s \\
	q_{s-1} & q_s \\
	\end{array} } \right) 
\left( {\begin{array}{cc}
	a & b \\
	c & d \\
	\end{array} } \right) 
\in B
\end{equation}
	gives us 
\begin{equation}\label{f:A_gBh}
a(\gamma p_{s-1} +\delta q_{s-1}) \equiv -c(\gamma p_s + \delta q_s)  \pmod p \,.
\end{equation}
	We can assume that $a,c \neq 0$ because 
	this case was considered above and 
	the same situation 
	 for $\gamma =0$.
	If $\delta= 0$, then $ap_{s-1} \equiv -c p_{s} \pmod p$ and fixing $p_s$ we find $p_{s-1}$ uniquely.   
	But $\frac{p_s}{p_{s-1}} = [b_s;b_{s-1}, \dots, b_2]$ and we determine the whole matrix, choosing $b_1$ in at most $M$ ways. 
	Thus  suppose that all coefficients in \eqref{f:A_gBh} do not vanish. 
	In view of the Bruhat decomposition 
	(i.e. one can put $d = \alpha = 0$, $\beta = b =1$, $\gamma = c = -1$)
	equation \eqref{f:A_gBh} can be rewritten as 
\begin{equation}\label{f:A_gBh'}
	a(\delta q_{s-1} - p_{s-1}) \equiv  \delta q_s -p_s \pmod p 
\end{equation}
	or, in other words, 
\begin{equation}\label{f:A_gBh''}
	\delta (q_s + \omega q_{s-1}) \equiv p_s + \omega p_{s-1} \pmod p \,,
\end{equation}	
	where $\omega = -a$. 
	Equation \eqref{f:A_gBh''} can be interpreted easily: any Borel subgroup fixes a point (the standard Borel subgroup fixes $\infty$) and hence inclusion \eqref{f:inclusion_gen_Borel} says that  our set $A$ transfers $\omega$ to $\delta$.
	In other terms, identity \eqref{f:A_gBh''} says that the tuples $(q_s,q_{s-1},p_s, p_{s-1})$ belongs to a hyperspace with the normal vector 
	$(\delta,\delta \omega,-1,-\omega)$ and hence for some other solutions of \eqref{f:A_gBh''}, we get 
\begin{eqnarray}\label{f:det_A}	
	\begin{vmatrix}
		q_s & q_{s-1} & p_s & p_{s-1} \\ 
		q'_s & q'_{s-1} & p'_s & p'_{s-1} \\ 
		q''_s & q''_{s-1} & p''_s & p''_{s-1} \\ 
		q'''_s & q'''_{s-1} & p'''_s & p'''_{s-1} 
	\end{vmatrix}
	 \equiv 0 \pmod p \,.
\end{eqnarray}
	Now consider the set $\tilde{A} \subset A$ which is constructing in an analogues way from $F_M (2^{-5} Q^{1/k})$, $k=4$ but not from $F_M (Q)$.
	Our first task is to prove 
\begin{equation}\label{f:sigma_tA_D+}
	\max_{g,h\in \SL_2 (\F_p)} |\tilde{A}\cap gBh| \le M p^{1/k} \,.
\end{equation}
	Clearly, $|\tilde{A}| \sim |A|^{1/k} \sim p^{2w_M/k}$ and hence \eqref{f:sigma_tA_D+} would give us an almost square--root saving as $M$ tends to $\infty$.
	If we solve equation \eqref{f:det_A} with elements from $\tilde{A}$, then we arrive to an equation 
\begin{equation*}
	X q_s + Y q_{s-1} + Z p_s + W p_{s-1} \equiv 0 \pmod p \,,
\end{equation*}
	where $|X|, |Y|, |Z|, |W| < 2^{-2} p^{3/k}$ which is, actually, an equation in $\Z$. 
	We can assume that not all integer coefficients $X,Y,Z,W$ (which itself are some determinants of matrix from \eqref{f:det_A}) vanish because otherwise we obtain a similar equation with a smaller number of variables.
	Without loosing of the generality, assume that $X\neq 0$ and substitute $q_s$ into the identity $q_s p_{s-1} - p_s q_{s-1} = (-1)^s = 1$. 
	We derive
\[
	q_{s-1} p_s X = -p_{s-1} (Y q_{s-1} + Z p_s + W p_{s-1}) -1 
\]
	or, in other words, 
\begin{equation}\label{tmp:divisors}
	(X q_{s-1} + Z p_{s-1})(X p_s + Y p_{s-1}) = YZ p^2_{s-1} - X(W p_{s-1}^2 + 1) := f(p_{s-1}) \,.
\end{equation}
	Fix $p_{s-1} < 2^{-5} p^{1/k}$ and suppose that $f(p_{s-1}) \neq 0$. 
	Then the number of the solutions to equation \eqref{tmp:divisors} can be estimated in terms of the divisor function as $p^{o(1)}$. 
	Further if we know $(q_{s-1},p_{s}, p_{s-1})$, then we determine the matrix from $A$ in at most $M$ ways. 
	Now in the case $f(p_{s-1}) = 0$, we see that there are at most two variants for $p_{s-1}$ and fixing $q_s \le 2^{-5} p^{1/k} $ in  $q_s p_{s-1} - p_s q_{s-1} = 1$, we find the remaining variables in at most $p^{o(1)}$ ways (or just use formula \eqref{tmp:divisors}). 
	Thus we have obtained \eqref{f:sigma_tA_D+}.

	To derive \eqref{f:sigma_A_D+} from \eqref{f:sigma_tA_D+} notice that $A \subseteq \tilde{A}X$, 
	where  $X$ is constructing in an analogues way from $F_M (2^5 (M+1) Q^{1-1/k})$.
	Then, using \eqref{f:sigma_tA_D+}, we get 
\[
	\max_{g,h\in \SL_2 (\F_p)} |A\cap gBh| \le \sum_{x \in X} |\tilde{A}x \cap gBh| \le Mp^{1/k} |X| \ll_M p^{2w_M + \frac{1}{k} (1-2w_M)} 
		\sim |A| \cdot p^{\frac{1-2w_M}{4}} \,.
\] 
This completes the proof of the lemma.
$\hfill\Box$
\end{proof}

	Assume that $|A| \sim p^{2w_M} \gg p^{3/2}$.  
Using formula \eqref{f:growth_in_P+} of Theorem \ref{t:growth_in_P}, as well as Lemma \ref{l:sigma_A}, we obtain an optimal lower bound for $|A^{-1}BA^{-1}|$.
\begin{corollary}	
	Let $w_M > 3/4$. 
	Then 
	\[
	|A^{}BA^{}|,~ |A^{-1}BA^{-1}| 
	\gg 
	p^3 \,.
	\]
\label{c:p^3}
\end{corollary}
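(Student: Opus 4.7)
The plan is to apply inequality \eqref{f:growth_in_P+} of Theorem \ref{t:growth_in_P} with the parabolic subgroup $P = B$, and then substitute the bounds on $\sigma_B(A,A^{-1})$ and $\sigma_B(A^{-1},A)$ supplied by Lemma \ref{l:sigma_A}. Concretely, \eqref{f:growth_in_P+} gives
\[
|ABA| \,\ge\, \frac{|B|}{4} \cdot \min\!\left\{ p,\; \frac{|A|^4}{\sigma_{B}(A^{-1},A)\,\sigma_{B}(A,A^{-1})} \right\},
\]
and since $|B| \sim p^2$ in $\SL_2(\F_p)$, everything reduces to checking which branch of the minimum is active.

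Next I would plug in $\sigma_{B}(A,A^{-1}) \le p|A|$ and $\sigma_{B}(A^{-1},A) \le M^2 p|A|$ from \eqref{f:sigma_A}. This yields
\[
\frac{|A|^4}{\sigma_{B}(A^{-1},A)\,\sigma_{B}(A,A^{-1})} \;\ge\; \frac{|A|^2}{M^2 p^2}.
\]
The hypothesis $w_M > 3/4$ forces $|A| \sim p^{2w_M}$ with $2w_M > 3/2$, so $|A|^2 / (M^2 p^2) \ge p$ for $p$ large (with $M$ fixed), and the minimum equals $p$. Combining with $|B| \sim p^2$ then gives $|ABA| \gg p^3$.

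For the second quantity $|A^{-1}BA^{-1}|$, I would apply the same inequality \eqref{f:growth_in_P+} to the set $A^{-1}$ in place of $A$. Under this substitution the roles of the two sigma quantities are merely swapped, since $\sigma_{B}((A^{-1})^{-1},A^{-1}) = \sigma_{B}(A,A^{-1})$ and $\sigma_{B}(A^{-1},(A^{-1})^{-1}) = \sigma_{B}(A^{-1},A)$, so Lemma \ref{l:sigma_A} applies verbatim and we get the same lower bound $p^3$.

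There is essentially no obstacle: the only thing to verify is that the regime $w_M > 3/4$ is precisely what is needed to push the second term of the minimum past $p$, taking into account the extra $M^2$ loss in the bound for $\sigma_B(A^{-1},A)$. Since $M$ is a fixed absolute constant (depending on $w_M$) and $|A|/p^{3/2} = p^{2w_M - 3/2} \to \infty$, this causes no trouble for $p$ sufficiently large, and the implicit constants in $\gg$ may be taken to depend on $M$.
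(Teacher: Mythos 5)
Your proposal is correct and coincides with the argument the paper has in mind: immediately before stating the corollary the text announces that it follows by combining \eqref{f:growth_in_P+} of Theorem~\ref{t:growth_in_P} with Lemma~\ref{l:sigma_A}, which is precisely what you do. Your check that $w_M > 3/4$ forces the minimum to equal $q = p$ (via $|A|^2/(M^2p^2) \gg p$ since $|A| \sim p^{2w_M}$ with $2w_M > 3/2$), and your observation that applying the same inequality to $A^{-1}$ merely swaps the two sigma factors, are both exactly right.
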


Now we are ready to prove Theorem \ref{t:main_MS_new}. 
First of all we obtain the result with the constant equals five and with the exact bounds (on $M$, say) and than subsequently  refine the constant, 
using some additional arguments (which give worse dependence on $M$). 
The method of 
obtaining 
the constant five is more general and can be generalized further, see Theorem \ref{t:CF_alphabet} below and remarks after it. 
One more time, decreasing $C$ in the condition $q=O(p^C)$,   we increasing the constant $\zk$.

Take $n\ge 1$ and consider the equation $a y_1 \dots y_n a' = b$, where $y_j\in Y$, $a,a'\in A$,  $b\in B$ and we will choose the set $Y$ later.  
If this equation has no solutions, then the equation $s y_1 \dots y_n s' = b$, $s\in BA :=S$, $s' \in AB := S'$ has no solutions as well. 
Applying the second part of Lemma \ref{l:sigma_A}, we can estimate the energies $\E(A^{-1},B)$, $\E(B,A)$.
But then formula \eqref{f:energy_CS} gives us
\begin{equation}\label{tmp:11.12_2}
	|S|, |S'| \gg_M |A| p \,.
\end{equation} 
By the arguments as in the proof of Theorem \ref{t:A^n_cap_B}, we obtain (recall that $d_{\min} (\SL_2 (\F_p)) \ge \frac{p-1}{2})$)
\[
	|Y|^n |S| |S'| |B| \ll |\Gr| \left( \frac{|\Gr| |S|}{p} \right)^{1/2} \left( \frac{|\Gr| |S'|}{p} \right)^{1/2} \left( \frac{|\Gr| |Y|}{p} \right)^{n/2}
\]
or, in other words, 
\begin{equation}\label{cond:A,Y}
	|Y|^n |A|^2 \ll p^{2n+4} \,.
\end{equation}
It remains to choose $Y$.
Let $K = |AAA|/ |A|$ and $\tilde{K} = |AA|/|A|$. 
If $\tilde{K} \gg p^6/|A|^{3}$, then $|AA| \gg   p^6/|A|^{2}$ and this is a contradiction with  inequality \eqref{cond:A,Y} 
for 
$Y=AA$ and $n=1$.  
Suppose that $\tilde{K} \ll p^6/|A|^{3}$. 
In \cite[inequality (30)]{MS_Zaremba}, using the Helfgott's method \cite{H}, \cite{RS_SL2}, it was proved that
\[
	|A|^2 p^{-1} \ll_M K \tilde{K} |A| \cdot  K^{2/3} |A|^{1/3} \,,
\]
provided 
\begin{equation}\label{condtmp:A_K}
	|A| \gg p^{3/2} K^{5/2}
\end{equation}
Combining the last estimate with $\tilde{K} \ll p^6/|A|^{3}$, we get 
\[
	K \gg_M \frac{|A|^{11/5}}{p^{21/5}} \,.
\]
It is easy to check, that if \eqref{condtmp:A_K} has no place, then we obtain even better lower bound for $K$. 
Applying inequality \eqref{cond:A,Y} with  $Y=AAA$ and $n=1$ we arrive to a contradiction, provided 
\[
	|A| \sim p^{2w_M} \gg p^{51/26} \,.
\]
In view of \eqref{oop} we can satisfy the last condition taking sufficiently large $M$. 
Thus $A^5 \cap B \neq \emptyset$ and one can calculate the required $M$ by formula \eqref{oop}.

To replace the constant  five in  Theorem \ref{t:main_MS_new} to four  it is enough to show (see inequality \eqref{cond:A,Y}) that $|AA| \gg |A|^{1+c}$, where $c>0$ is an absolute constant and for the last in view of \eqref{f:energy_CS} it is enough to obtain a non--trivial upper bound for the energy of $A$ of the form $\E(A,A) \ll |A|^{3-c}$.  
Suppose for a certain  $T\geq 1$, $\E(A,A) = |A|^3/T$.
By the non--commutative  Balog--Szemer\'edi--Gowers Theorem, see \cite[Theorem 32]{Brendan_rich} or \cite[Proposition 2.43, Corollary 2.46]{TV}
there is $a\in A$ and $A_* \subseteq a^{-1}A$, $|A_*|\gg_T |A|$ such that $|A^3_*| \ll_T |A_*|$.
Here the signs $\ll_T$, $\gg_T$ mean that  all dependences on $T$ are polynomial.
In view of the Helfgott's growth result or Theorem \ref{t:K_lower_SL} it is enough to show that $A_*$ does not belong to a coset of a Borel subgroup. 
But it easily follows from bound \eqref{f:sigma_A_D+} of Lemma \ref{l:sigma_A} (here we assume that $w_M > 1/2$) and the lower bound for size of $A$ (and hence size of $A_*$).

To replace the constant  four in  Theorem \ref{t:main_MS_new} to three notice that the Parseval identity \eqref{f:Parseval_representations} gives us 
$\| A\|^4_o \ll \E(A,A) |\Gr|/p  \ll |A|^{3-c} p^2$. 
Here $c>0$ is an absolute constant, $\| A\|_o = \max_{\rho\neq 1} \| \FF{A}(\rho)\|_o$ and $M$ is taken to be large enough. 
Hence we find a solution to the equation $sas'=b$ provided
\begin{equation}\label{f:using_Fourier_A}
	\frac{|S|^2 |A| |B|}{|\Gr|} \gg p |A|^3 > \frac{|S| |\Gr|}{p}	\| A\|_o \gg  |A| p^3 (|A|^{3-c} p^2)^{1/4} 
\end{equation}
or, in other words,  $|A| \gg p^{\frac{10}{5+c}}$. 
In view of \eqref{oop} we can satisfy the last condition taking sufficiently large $M$.

Finally, we replace the constant  three in  Theorem \ref{t:main_MS_new} to two and further to $1+\eps$.
Let $\Lambda \subset A$ be a set constructing in an analogues way from $F_M (\sqrt{Q})$ but not from $F_M (Q)$.
Clearly, $|\Lambda| \sim p^{w_M} \sim \sqrt{|A|}$ and $\Lambda^2 \subseteq A$.

\begin{lemma}
	Let $X\subseteq B$ be an arbitrary set.
	We have $\E(\Lambda, X) = |\Lambda| |X|$ and $\E(\Lambda^{-1}, X) \le M^4 |\Lambda| |X|$.
	In particular, $|B\Lambda| = |B| |\Lambda|$ and $|\Lambda B| \ge |B| |\Lambda|/M^4$. 
\label{l:E_sqrt}
\end{lemma}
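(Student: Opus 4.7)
The plan is to exploit the smallness of the entries of matrices in $\Lambda$: since $\Lambda$ is constructed from $F_M(\sqrt{Q})$ with $Q=p-1$, every entry of every $\lambda\in \Lambda$ is at most $\sqrt{p-1}$, so any product of two such entries lies strictly between $-p$ and $p$. Consequently, the condition that a $2\times 2$ matrix built out of such entries lies in the Borel $B$ (i.e.\ that its $(2,1)$-entry vanishes mod $p$) is in fact an equation in $\mathbb{Z}$, after which uniqueness of continued fractions determines the matrix.

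For $\E(\Lambda, X) = |\Lambda||X|$, the diagonal quadruples $(\lambda,\lambda,x,x)$ contribute exactly $|\Lambda||X|$, so it suffices to show that $\lambda_1^{-1} x_1 = \lambda_2^{-1} x_2$ with $\lambda_i\in \Lambda$, $x_i\in X\subseteq B$ forces $\lambda_1=\lambda_2$ (whence $x_1=x_2$ is automatic). Rewriting the condition as $\lambda_2\lambda_1^{-1}\in B$ and computing the $(2,1)$-entry using $\det \lambda_i = 1$ (guaranteed by the parity of $s$), one obtains $q^2_{s-1}q^1_s \equiv q^1_{s-1}q^2_s \pmod p$, which by the size bound is an equality in $\mathbb{Z}$. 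Since $\gcd(q^i_{s-1},q^i_s)=1$ (again from $\det=1$), this forces $(q^1_{s-1},q^1_s)=(q^2_{s-1},q^2_s)$, and the identity $q_s/q_{s-1}=[b_s;b_{s-1},\dots,b_1]$ already used in Lemma \ref{l:sigma_A} then recovers all partial quotients, hence $\lambda$.

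The bound $\E(\Lambda^{-1}, X)\le M^4|\Lambda||X|$ is obtained by the same recipe applied to the condition $\lambda_2^{-1}\lambda_1\in B$, whose $(2,1)$-entry lifts to $p^1_{s-1}q^2_{s-1}=p^2_{s-1}q^1_{s-1}$ in $\mathbb{Z}$; coprimality forces $(p^1_{s-1},q^1_{s-1})=(p^2_{s-1},q^2_{s-1})$. This pair is the reduced convergent $p_{s-1}/q_{s-1}$, which pins down $b_1,\dots,b_{s-1}$; only $b_s\in\{1,\dots,M\}$ is free, so for each $\lambda_2$ there are at most $O(M)$ admissible $\lambda_1$, and for each such pair the map $x_1\mapsto x_2=(\lambda_2^{-1}\lambda_1)^{-1}x_1$ contributes at most $|X|$. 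The loose constant $M^4$ absorbs with room to spare the continued-fraction convention ambiguity at the last partial quotient (and the resulting length mismatch). The two size estimates fall out of \eqref{f:energy_CS}: with $(A,B)=(B,\Lambda)$ it gives $|B\Lambda|\ge |B||\Lambda|$, complementing the trivial reverse bound, and with $(A,B)=(\Lambda^{-1},B)$ it yields $|\Lambda B|\ge |\Lambda||B|/M^4$. The main delicacy of the argument is precisely the continued-fraction bookkeeping; the slack from the true bound $O(M)$ up to $M^4$ is there to hide it.
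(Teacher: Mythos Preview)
Your proof is correct and follows the same route as the paper: for both energies you reduce to the condition $\lambda_2\lambda_1^{-1}\in B$ (resp.\ $\lambda_2^{-1}\lambda_1\in B$), read off the $(2,1)$-entry to get a congruence in the $q$'s (resp.\ in $p_{s-1},q_{s-1}$), use the bound $\le\sqrt{p-1}$ on the entries of $\Lambda$ to lift it to an identity in $\Z$, and then invoke coprimality (a consequence of $\det=1$) together with the uniqueness of the continued fraction expansion to reconstruct the matrices. The ``in particular'' clauses are deduced from \eqref{f:energy_CS} exactly as you do.

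The only difference is in the bookkeeping for the second bound. The paper, after obtaining $(p_{s-1},q_{s-1})=(p'_{t-1},q'_{t-1})$, argues in two pieces: first that each matrix is recovered in at most $M$ ways (so the pair in at most $M^2$ ways), and second that the image $\Lambda^{-1}\Lambda\cap B$ itself has at most $M^2$ elements (this follows from the identity $\lambda_1^{-1}\lambda_2=\left(\begin{smallmatrix}1&b^2_s-b^1_s\\0&1\end{smallmatrix}\right)$ once the first $s-1$ partial quotients agree), combining to $M^4$. Your direct count --- fix $\lambda_2$, then $\lambda_1$ has $O(M)$ choices, then $x_1$ determines $x_2$ --- is tighter and more transparent, yielding $O(M)|\Lambda||X|$; you correctly note that the stated $M^4$ absorbs this with room to spare (and also covers the length/convention ambiguity, which in fact the parity constraint ``$s$ even'' already rules out).
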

\begin{proof} 
	As in the proof of Lemma \ref{l:sigma_A}, we see that $\Lambda \Lambda^{-1} \in B$ iff $q'_t q_{s-1} \equiv q_s q'_{t-1} \pmod p$ (we use the notation from the lemma). 
	The set $\Lambda$ has been constructed from $F_M (\sqrt{Q})$ and hence we have $q'_t q_{s-1} = q_s q'_{t-1}$. 
	Obviously, $(q_{s-1}, q_s) = (q'_{t-1}, q'_t) = 1$ and hence $q_s = q'_t$, $q_{s-1} = q'_{t-1}$.
	After that we reconstruct the both matrices and obtain    $\E(\Lambda, X) = |\Lambda| |X|$. 

	Similarly, $\Lambda^{-1} \Lambda \in B$ iff $p'_{t-1} q_{s-1} \equiv p_{s-1} q'_{t-1} \pmod p$ and whence $p'_{t-1} q_{s-1} = p_{s-1} q'_{t-1}$.
	Again, $(q_{s-1}, p_{s-1}) = (q'_{t-1}, p'_{t-1}) = 1$ and hence $p_{s-1} = p'_{t-1}$, $q_{s-1} = q'_{t-1}$.
	After that we reconstruct both matrices in at most $M^2$ ways.
	Finally, 
	from 
	\eqref{f:continuants_aj} 
	it follows that the image $\Lambda^{-1} \Lambda$ belongs to a set of cardinality at most $M^2$  
	and whence we obtain    $\E(\Lambda, X) \le M^4 |\Lambda| |X|$. 
This completes the proof of the lemma.
$\hfill\Box$
\end{proof}

After that we redefine $S$ and $S'$ as $B\Lambda$, $\Lambda B$, respectively, and use the calculations from \eqref{f:using_Fourier_A}.
It gives 
\begin{equation}\label{f:using_Fourier_A'}
\frac{|S|^2 |A| |B|}{|\Gr|} \gg p^3 |A|^2 > \frac{|S| |\Gr|}{p}	\| A\|_o \gg  |A|^{1/2} p^4 (|A|^{3-c} p^2)^{1/4} 
\end{equation}
or, in other words,  $|A| \gg p^{\frac{6}{3+c}}$. 
In view of \eqref{oop} we can satisfy the last condition taking sufficiently large $M$. 
Thus we have obtained the integer constant two but it is easy to see that this quantity is, actually, $2-\tilde{c}$, where the absolute constant $\tilde{c}$ depends on $c$. Indeed, just replace $\sqrt{p-1}$ in the definition  
of the set $\Lambda$ 
to 
$p^{(1-\eps)/2}$ for sufficiently small $\eps = \eps(c) >0$ and repeat the calculations above.

In the last step we take an integer parameter $k \sim 1/\epsilon$ and consider $\Lambda_k \subset A$, constructed from $F_M (2^{-1} Q^{1/k})$. 
Let $\tilde{A} = \Lambda^k_k \subset A$ and we have $|\tilde{A}| \sim_k |A|$  
(more precisely, $|A| \ge |\tilde{A}| \ge \eta^k |A|$, where $\eta <1$ is an absolute constant). 
In other words, $A$ and $\tilde{A}$ have comparable sizes. 
In particular, Lemma \ref{l:sigma_A} takes place for $\tilde{A}$, hence $\E(\tilde{A}) \ll_k |\tilde{A}|^{3-c}$ and whence $\| \tilde{A} \|_o \ll_k |\tilde{A}|^{1-c_*}$. 
The set $\tilde{A}$ is the direct product of $k$ copies of  $\Lambda_k$ 
and hence the set of all eigenvalues of the Fourier transform  $\FF{\tilde{A}} (\rho)$ is the $k$th power of the set of all eigenvalues of $\FF{\Lambda}_k (\rho)$.
We will show 
a little bit later 
that this relation, indeed, implies a power saving for the operator norm of $\FF{\Lambda}_k (\rho)$.  
It means that $\|\Lambda_k\|_o \ll_k |\Lambda_k|^{1-c_* (k)}$ for a certain $c_* (k) >0$ and calculations in \eqref{f:using_Fourier_A'} for the equation $s\la_k s' = b$, $\la_k \in \Lambda_k$ give us 
\begin{equation}\label{tmp:10.03.2020_1}
	p^3 |A| |\Lambda_k| \gg |\Lambda_k|^{1-c_* (k)} |A|^{1/2} p^4  \sim |\Lambda_k|^{1-c_* (k)} |\Lambda| p^4 \gg_k |S| \|\Lambda_k\|_o p^2 
\end{equation}
and this is attained for any sufficiently large $M=M(\epsilon)$, $M\gg k/c_* (k)$ 
because inequality \eqref{tmp:10.03.2020_1} is equivalent to 
\begin{equation}\label{tmp:10.03.2020_2}
p^{w_M + 2c_* (k) w_M/k }\gg_k p \,.
\end{equation} 
To demonstrate the required power saving for the operator norm of $\FF{\Lambda}_k (\rho)$ we need to obtain an analogue of Lemma \ref{l:sigma_A} for the set $\Lambda_k$ and $p$ replaced by $p^{1/k}$.  
But because of similarity of $A$ and $\Lambda_k$ the proof is the same  and moreover for $k\ge 4$ we have the following uniform bound $|\Lambda_k \cap gBh| \ll_M p^{1/k}$ for all $g,h\in \SL_2 (\F_p)$ (see the arguments of  the proof of Lemma \ref{l:sigma_A}, in particular, bound \eqref{f:sigma_tA_D+}). 
As for the intersection of $\Lambda_k$ with the dihedral groups $\Gamma$ (another class of maximal subgroups of $\SL_2 (\F_p)$) the arguments are the same again 
(any dihedral subgroup provides even two linear restrictions for the tuple $(p_{s-1}, q_{s-1}, p_s, q_s)$)
and they give the estimate  $|\Lambda_k \cap g \Gamma h| \ll_M p^{1/k}$ for all $g,h\in \SL_2 (\F_p)$ and $k\ge 4$ (the details can be found in \cite{BG} and in \cite[Lemma 21]{NG_S}). 
This completes the proof of Theorem \ref{t:main_MS_new}. 
$\hfill\Box$




\bigskip 

Applying
the second part of Theorem \ref{t:A^n_cap_B_SL2} and the arguments of the proof of the result above (avoid using of  Lemma \ref{l:sigma_A} and Lemma \ref{l:E_sqrt} which appellate to the specific structure of the set $A$), we obtain

\begin{theorem}
	Let $\mathcal{A} \subset \mathbb{N}$ be a finite  set, $|\mathcal{A}| \ge 2$ such that $\mathrm{HD} (F_{\mathcal{A}}) > 1/2 + \delta$, where $\delta >0$. 
	There is an integer constant $C_{\mathcal{A}}(\delta)$ such that for any prime number  $p$ 
	there exist some positive integers $q = O_{\mathcal{A}}(p^{C_{\mathcal{A}}(\delta)})$,  $q\equiv 0 \pmod p$ and  $a$, $(a,q)=1$ 
	having  the property that the ratio $a/q$ has partial quotients  belonging to $\mathcal{A}$.  
\label{t:CF_alphabet}
\end{theorem}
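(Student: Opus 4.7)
\proof[Proof proposal.]
The strategy is to follow the template of Theorem \ref{t:main_MS_new} but, as the hint suggests, bypass the explicit alphabet-dependent Lemma \ref{l:sigma_A} and Lemma \ref{l:E_sqrt} by invoking the general result, part $2)$ of Theorem \ref{t:A^n_cap_B_SL2}, as a black box. Set $Q = p - 1$ and associate to $F_{\mathcal{A}}(Q)$ the subset $A \subseteq \SL_2(\F_p)$ of matrices of the shape \eqref{f:continuants_aj}, where all partial quotients lie in $\mathcal{A}$ and the length $s$ is even (this last condition, as in the proof of Theorem \ref{t:main_MS_new}, only costs an $O_{\mathcal{A}}(1)$ factor after possibly multiplying the odd-length matrices by one further elementary factor $(01|1b)^{-1}$, $b \in \mathcal{A}$). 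By the generalisation of \eqref{QLOW} to arbitrary finite alphabets \cite{hensley1989distribution,hensley1996} we have
\[
|A| \gg_{\mathcal{A}} Q^{2\,\mathrm{HD}(F_{\mathcal{A}})} \;\gg_{\mathcal{A}}\; p^{1+2\delta} \,,
\]
since $\mathrm{HD}(F_{\mathcal{A}}) > 1/2 + \delta$. Note that with $Q = p - 1$ the reduction modulo $p$ induces a bijection between the continuant pairs $(p_s,q_s,p_{s-1},q_{s-1})$ with entries in $[0,Q]$ and their images in $\SL_2(\F_p)$, so no cardinality is lost in passing from $F_{\mathcal{A}}(Q)$ to $A$.

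Now apply part $2)$ of Theorem \ref{t:A^n_cap_B_SL2} with $\delta' := 2\delta$: there exists an integer $n \ll 1/\delta$ (with a constant independent of $\mathcal{A}$) such that $A^n \cap B \neq \emptyset$, where $B$ is the standard Borel subgroup of upper-triangular matrices in $\SL_2(\F_p)$. An element of $A^n$ is again a product of elementary matrices $(01|1b)$ with $b \in \mathcal{A}$, hence is of the form \eqref{f:continuants_aj} associated to some continued fraction with all partial quotients in $\mathcal{A}$. The condition of lying in $B$ modulo $p$ reads $q_{s-1} \equiv 0 \pmod p$; setting $a := p_{s-1}$ and $q := q_{s-1}$, the identity $p_{s-1} q_s - p_s q_{s-1} = \pm 1$ forces $(a,q) = 1$, and $p \mid q$ by construction.

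It remains to bound $q$. Each matrix in $A$ has nonnegative integer entries bounded by $Q < p$, so a product of $n$ such matrices has entries of size at most $(c_{\mathcal{A}} p)^n$ where $c_{\mathcal{A}}$ depends only on the cardinality and maximum of $\mathcal{A}$. With $n \ll 1/\delta$ this gives $q = O_{\mathcal{A}}(p^{C_{\mathcal{A}}(\delta)})$ for some constant $C_{\mathcal{A}}(\delta) \ll_{\mathcal{A}} 1/\delta$, which is the conclusion of the theorem. The main obstacle, honestly, is not in this deduction but is encapsulated inside Theorem \ref{t:A^n_cap_B_SL2} itself: the case analysis there, when $A$ does not generate $\SL_2(\F_p)$, requires that $A$ be confined to a Borel subgroup (which rules out dihedral and exceptional subgroups via the size bound $|A| \gg p^{1+2\delta}$) and is then handled by intersecting with a torus via Corollary \ref{c:Aff_U}. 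Once that is available, the present theorem reduces to a routine translation from growth in $\SL_2(\F_p)$ to statistics of continuants, and the whole dependence on $\mathcal{A}$ enters only through the dimension bound \eqref{QLOW} and the trivial bound on the entries of the continuant matrices. $\hfill\Box$
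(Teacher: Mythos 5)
Your proposal is correct and matches the paper's own (very terse) proof almost verbatim: the paper likewise invokes the second part of Theorem \ref{t:A^n_cap_B_SL2} as a black box and reuses only the matrix--continuant translation from the proof of Theorem \ref{t:main_MS_new}, explicitly avoiding Lemma \ref{l:sigma_A} and Lemma \ref{l:E_sqrt}. Your write-up merely spells out the translation (bijectivity of reduction mod $p$ since $Q<p$, the identity $p_{s-1}q_s - p_s q_{s-1}=\pm 1$ giving $(a,q)=1$, and the crude entry bound $q \le (2p)^n$) that the paper leaves implicit, so there is no genuine difference in approach.
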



Thanks to \eqref{f:w_2} we see  in particular, that Theorem \ref{t:CF_alphabet} takes place for $\mathcal{A} = \{1,2 \}$. 
Previously, this fact was obtained in \cite{MS_Zaremba} by another approach (although one can check that now our new constant $C_{\mathcal{A}}(\delta)$ is better).  
As the reader can see from the proof, our method is rather general and we do not even need, actually, in restrictions of the form $b_j \in \mathcal{A}$ and 
it is possible to consider other (say, Markov--type) conditions for the partial quotients (of course we still need that the Hausdorff dimension of the corresponding Cantor set is greater than $1/2$).

\bigskip

\noindent{I.D.~Shkredov\\
Steklov Mathematical Institute,\\
ul. Gubkina, 8, Moscow, Russia, 119991}
\\
and
\\
IITP RAS,  \\
Bolshoy Karetny per. 19, Moscow, Russia, 127994\\
and 
\\
MIPT, \\ 
Institutskii per. 9, Dolgoprudnii, Russia, 141701\\
{\tt ilya.shkredov@gmail.com}

\end{document}